\newtheorem{df}{Definition}[section]
\newtheorem{thm}[df]{Theorem}
\newtheorem{lem}[df]{Lemma}
\newtheorem{claim}[df]{Claim}
\newtheorem{conc}[df]{Conclusion}
\newtheorem{obs}[df]{Observation}
\newtheorem{nota}[df]{Notation}
\newtheorem{assu}[df]{Assumption}
\newtheorem{rem}[df]{Remark}
\newtheorem{conv}[df]{Convention}
\def\gP{{\frak P}}
\def\gL{{\frak L}}
\def\gH{{\frak H}}
\def\bN{{\mathbb N}}
\def\bp{{\bar p}}
\def\bq{{\bar q}}
\def\dcup{{\cup {\!{\!{\!{\cdot}}}}\,}}
\title{Hereditary zero-one laws for graphs}
\author{Saharon Shelah and Mor Doron}
\thanks{The authors would like to thank the Israel Science Foundation for partial support of this research (Grant no. 242/03). Publication no. 953 on Saharon Shelah's list.}
\begin{document}

\begin{abstract}
We consider the random graph $M^n_{\bar{p}}$ on the set $[n]$, were the probability of $\{x,y\}$ being an edge is $p_{|x-y|}$, and
$\bar{p}=(p_1,p_2,p_3,...)$ is a series of probabilities. We consider the set of all $\bar{q}$ derived from $\bar{p}$ by inserting 0 probabilities to $\bar{p}$, or alternatively by decreasing some of the $p_i$. We say that $\bar{p}$ hereditarily satisfies
the 0-1 law if the 0-1 law (for first order logic) holds in $M^n_{\bar{q}}$ for any $\bar{q}$ derived from $\bar{p}$ in the relevant way described above. We give a necessary and sufficient condition on $\bar{p}$ for it to hereditarily satisfy the 0-1 law.
\end{abstract}

\maketitle

\section{Introduction}
In this paper we will investigate the random graph on the set $[n]=\{1,2,...,n\}$ were the probability of a pair $i\neq j\in[n]$ being connected by an edge depends only on their distance $|i-j|$. Let us define:
\begin{df} \label{DefM}
For a sequence $\bp=(p_1,p_2,p_3,...)$ where each $p_i$ is a probability i.e. a real in $[0,1]$,
let $M^n_{\bp}$ be the random graph defined by:
\begin{itemize}
\item The set of vertices is $[n]=\{1,2,...,n\}$.
\item For $i,j\leq n$, $i\neq j$ the probability of $\{i,j\}$ being an
edge is $p_{|i-j|}$.
\item All the edges are drawn independently.
\end{itemize}
\end{df}

If $\gL$ is some logic, we say that $M^n_{\bp}$ satisfies the 0-1 law for the logic $\gL$ if for each sentence $\psi\in\gL$ the probability that $\psi$ holds in $M^n_{\bp}$ tends to $0$ or $1$, as $n$ approaches $\infty$. The relations between properties of $\bp$ and the asymptotic behavior of $M^n_{\bp}$ were investigated in \cite{LuSh}. It was proved there that for $L$, the first order logic in the vocabulary with only the adjacency relation, we have:

\begin{thm} \label{ThmPrevL}
\begin{enumerate}
\item Assume $\bp=(p_1,p_2,...)$ is such that $0\leq p_i<1$ for all $i>0$ and let $f_{\bp}(n):=\log(\prod_{i=1}^n(1-p_i))/\log(n)$.
If $\lim_{n\to\infty}f_{\bp}(n)=0$ then $M^n_{\bp}$ satisfies the 0-1 law for $L$.
\item The demand above on $f_{\bp}$ is the best possible. Formally for each $\epsilon>0$, there exists some $\bp$ with $0\leq p_i<1$ for all $i>0$
such that $|f_{\bp}(n)|<\epsilon$ but the 0-1 law fails for $M^n_{\bp}$.
\end{enumerate}
\end{thm}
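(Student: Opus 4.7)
The plan for part (1) is to follow the Fagin--Glebskii template: isolate a family of extension axioms whose validity in $M^n_{\bp}$ suffices to complete the asymptotic theory, and verify that each such axiom holds with probability tending to $1$. For every finite $0$-$1$ pattern $\tau$ prescribing the adjacencies of a new vertex $y$ to a fixed tuple $\bar{x}=(x_1,\dots,x_k)$, the extension axiom $\theta_\tau$ states that for every realization of $\bar{x}$ in $[n]$ there exists $y\in[n]$ realizing $\tau$ over $\bar{x}$. Since the vocabulary has only the edge relation, a standard Ehrenfeucht--Fraisse / back-and-forth argument shows that every first-order sentence is, modulo these axioms, equivalent to a Boolean combination of finitely many of them, and hence receives a limiting truth value.

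The heart of the argument is the probabilistic estimate for a single axiom. Fix $\bar{a}\in[n]^k$ and let $Z_{\bar{a},\tau}$ count the $y\in[n]$ realizing $\tau$ over $\bar{a}$. For each such $y$, its success probability factors as $\prod_{i}r_i(y)$, where $r_i(y)$ equals $p_{|y-a_i|}$ or $1-p_{|y-a_i|}$ according to $\tau$. Summing over $y$ and regrouping by distance, the non-edge contributions to $\mathbb{E}[Z_{\bar{a},\tau}]$ are absorbed, up to a factor depending only on $k$ and $\tau$, into the global quantity $\prod_{i=1}^{n}(1-p_i)=n^{o(1)}$ supplied by the hypothesis $f_{\bp}(n)\to 0$. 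This should yield $\mathbb{E}[Z_{\bar{a},\tau}]=n^{1-o(1)}$, and a second-moment or Janson-type concentration argument then promotes this to $Z_{\bar{a},\tau}\ge 1$ simultaneously for all $\bar{a}$, with probability tending to $1$.

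For part (2), given $\epsilon>0$ I would construct a counterexample by concentrating the ``mass'' $\sum_i -\log(1-p_i)$ on a carefully chosen very sparse sequence of distances $d_1<d_2<\cdots$: set $p_{d_k}$ close to $1$, all other $p_i=0$, and space the $d_k$ so that the partial sums grow more slowly than $\epsilon\log n$. The graph $M^n_{\bp}$ is then essentially a union of sparse random near-matchings at the distances $d_k\le n$. A first-order sentence tracking the presence or typical multiplicity of certain short configurations involving these matchings should discriminate between $n$ values lying just above $d_k$ and $n$ values lying just above $d_{k+1}$, destroying convergence of the probability.

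The chief technical obstacle is the second-moment estimate in part (1). Edge probabilities are not vertex-transitive, so two candidate witnesses $y,y'$ are not exchangeable: their joint success probability depends on the full distance profile $(|y-a_i|,|y'-a_i|,|y-y'|)_i$. The combinatorial bookkeeping---partitioning pairs $(y,y')$ by this profile and bounding each block via the global $n^{o(1)}$ estimate---is where most of the work lies. The counterexample in part (2) is more elementary but also requires care: choosing the $d_k$ and the $p_{d_k}$ so that the target FO sentence is genuinely affected while $|f_{\bp}(n)|$ stays below $\epsilon$ demands a delicate balance between ``spikes'' and ``silences''.
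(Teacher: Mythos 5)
First, note that this paper does not prove Theorem \ref{ThmPrevL} at all: it is quoted from \cite{LuSh}. So the comparison must be with the mechanisms actually used there and echoed in this paper (Gaifman locality and interval-embeddings for the positive direction, isolated configurations with an arithmetic ``properness'' condition for the negative one), and against those your plan has a genuine gap, already at the first moment, not at the second-moment stage you flag. The hypothesis $f_{\bp}(n)\to 0$ forces $\sum_{i\le n}p_i\le\sum_{i\le n}-\log(1-p_i)=o(\log n)$, so $M^n_{\bp}$ is extremely sparse: expected degrees are $o(\log n)$, the expected number of isolated vertices is at least $n\prod_{i\le n}(1-p_i)^2=n^{1-o(1)}\to\infty$, and in the admissible example $p_1=1/2$, $p_i=0$ for $i>1$ (or even $\bp\equiv 0$) every extension axiom requiring at least one adjacency fails outright while the 0-1 law is precisely the nontrivial content. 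Consequently the extension axioms $\theta_\tau$ are \emph{not} almost surely true under the hypothesis, the almost-sure theory is not the dense Fagin--Glebskii theory, and the estimate $\mathbb{E}[Z_{\bar a,\tau}]=n^{1-o(1)}$ is false whenever $\tau$ demands an edge (the edge factors contribute only $O(\sum_i p_i)=o(\log n)$, not a power of $n$). The correct route is local: one shows that every ``proper'' bounded configuration embeds strongly into every interval of length $n/c$ with probability tending to $1$ (this is exactly Claim \ref{ClaimOne1}, credited to the Lemma on p.~8 of \cite{LuSh}) and then decides local sentences via Gaifman's theorem / Ehrenfeucht--Fra\"{\i}ss\'e games, as in Lemma \ref{LemOne1}; no completeness-by-extension-axioms is available in this regime.

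For part (2) the sketch is missing the key idea. A sentence asserting mere \emph{presence} (or presence of at least $m$ copies) of a fixed finite configuration cannot oscillate: far-apart candidate positions are independent, so once the configuration has positive probability at linearly many positions its probability tends to $1$. What makes the probability oscillate is an \emph{isolation/valency clause}: the expected number of isolated copies is of order $n\cdot\bigl(\prod_{i\le n}(1-p_i)\bigr)^{\Theta(k)}$, so it tends to $0$ along stretches where the accumulated mass is a definite fraction $\delta$ of $\log n$ with $k\delta>1$ (the paper takes $k\epsilon>2$ in Claim \ref{ClaimNeg}), and tends to $\infty$ (whence existence w.h.p., by independence at large distances, Claim \ref{ClaimPos}) along stretches of appended zeros where the mass stops growing. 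Running the first-moment bound also requires controlling unintended witnesses, which is what the arithmetic ``properness'' of Definition \ref{DefProper} (triangles only of the form $\{l,l+l^*,l+2l^*\}$, so only $O(n)$ candidates) is for; your sketch has no analogue. Moreover your specific choice (spikes $p_{d_k}$ close to $1$, all else $0$) makes $M^n_{\bp}$ nearly a deterministic union of long induced paths, and no particular first-order sentence is exhibited that distinguishes $n$ just above $d_k$ from $n$ just above $d_{k+1}$; degree counts exceed any fixed quantifier rank once enough spikes lie below $n/2$, so the discrimination you hope for is not available to a fixed sentence. In short, part (2) needs the isolated-configuration mechanism of Section 2 (or of \cite{LuSh}), not just spikes and silences.
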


Part (1) above gives a necessary condition on $\bp$ for the 0-1 law to hold in $M^n_{\bp}$, but the condition is not sufficient and a full characterization of $\bp$ seems to be harder. However we give below a complete characterization of $\bp$ in terms of the 0-1 law in $M^n_{\bq}$ for all $\bq$ "dominated by $\bp$", in the appropriate sense.  Alternatively one may ask which of the asymptotic properties of $M^n_{\bp}$ are kept under some operations on $\bp$. The notion of "domination" or the "operations" are taken from examples of the failure of the 0-1 law, and specifically the construction for part (2) above. Those are given in \cite{LuSh} by either adding zeros to a given sequence or decreasing some of the members of a given sequence. Formally define:

\begin{df}
For a sequence $\bp=(p_1,p_2,...)$:
\begin{enumerate}
    \item
    $Gen_1(\bp)$ is the set of all sequences $\bq=(q_1,q_2,...)$ obtained from $\bp$ by adding zeros to $\bp$. Formally $\bq\in Gen_1(\bp)$ iff for some increasing $f:\bN\to\bN$ we have for all $l>0$
    \begin{displaymath}
    q_l=
        \left\{ \begin{array}{ll}
        p_i&F(i)=l\\
        0&l\not\in Im(f).
    \end{array}\right.
    \end{displaymath}
    \item
    $Gen_2(\bp):=\{\bq=(q_1,q_2,...):l>0 \Rightarrow q_l\in [0,p_l]\}$.
    \item
    $Gen_3(\bp):=\{\bq=(q_1,q_2,...):l>0 \Rightarrow q_l\in\{0,p_l\}\}$.

\end{enumerate}
\end{df}

\begin{df}
Let $\bp=(p_1,p_2,...)$ be a sequence of probabilities and $\gL$ be some logic. For a sentence $\psi\in\gL$ denote by $Pr[M^n_{\bp}\models\psi]$ the probability that $\psi$ holds in $M^n_{\bp}$.
\begin{enumerate}
    \item
    We say that $M^n_{\bp}$ satisfies the 0-1 law for $\gL$, if for all $\psi\in\gL$ the limit $\lim_{n\to\infty}Pr[M^n_{\bp}\models\psi]$ exists and belongs to $\{0,1\}$.
    \item
    We say that $M^n_{\bp}$ satisfies the convergence law for $\gL$, if for all $\psi\in\gL$ the limit $\lim_{n\to\infty}Pr[M^n_{\bp}\models\psi]$ exists.
    \item
    We say that $M^n_{\bp}$ satisfies the weak convergence law for $\gL$, if for all $\psi\in\gL$, $\limsup_{n\to\infty}Pr[M^n_{\bp}\models\psi]-\liminf_{n\to\infty}Pr[M^n_{\bp}\models\psi]<1$.
    \item
    For $i\in\{1,2,3\}$ we say that $\bp$ $i$-hereditarily satisfies the 0-1 law for $\gL$, if for all $\bq\in Gen_i(\bp)$, $M^n_{\bq}$ satisfies the 0-1 law for $\gL$.
    \item
    Similarly to (4) for the convergence and weak convergence law.
\end{enumerate}
\end{df}

The main theorem of this paper is the following strengthening of theorem \ref{ThmPrevL}:
\begin{thm} \label{ThmMain}
Let $\bp=(p_1,p_2,...)$ be such that $0\leq p_i<1$ for all $i>0$, and $j\in\{1,2,3\}$. Then $\bp$ $j$-hereditarily satisfies the 0-1 law for $L$ iff $$(*)\qquad\qquad\lim_{n\to\infty}\log(\prod_{i=1}^n(1-p_i))/\log n=0.$$
Moreover we may replace above the "0-1 law" by the "convergence law" or "weak convergence law".
\end{thm}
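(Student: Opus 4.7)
The plan is to prove, for each fixed $j\in\{1,2,3\}$, the equivalence of $(*)$ with the $j$-hereditary weak convergence law. Since the 0-1 law implies the convergence law which in turn implies the weak convergence law, it suffices to establish $(*)\Rightarrow$ $j$-hereditary 0-1 law, and the contrapositive (failure of $(*)$) $\Rightarrow$ (failure of the $j$-hereditary weak convergence law).

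For the first implication, the key observation is that $(*)$ is inherited by every $\bq\in Gen_j(\bp)$, so Theorem~\ref{ThmPrevL}(1) applies to $\bq$ to deliver the 0-1 law for $M^n_{\bq}$. When $j=2$, since $0\leq q_i\leq p_i<1$, one has $\prod_{i\leq n}(1-q_i)\geq\prod_{i\leq n}(1-p_i)$ and hence $|f_{\bq}(n)|\leq |f_{\bp}(n)|\to 0$. The case $j=3$ follows from $Gen_3(\bp)\subseteq Gen_2(\bp)$. When $j=1$, if $\bq$ arises from the increasing $f:\bN\to\bN$, then for $f(n)\leq N<f(n+1)$ one has $\prod_{l\leq N}(1-q_l)=\prod_{i\leq n}(1-p_i)$; since $f$ being increasing between copies of $\bN$ forces $f(i)\geq i$ and hence $n\leq N$, one concludes $|f_{\bq}(N)|\leq|f_{\bp}(n)|\to 0$.

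For the contrapositive, suppose $(*)$ fails, so there exist $\epsilon>0$ and $n_k\to\infty$ with $\prod_{i\leq n_k}(1-p_i)\leq n_k^{-\epsilon}$, meaning that $\bp$ carries enough probability mass in every initial segment $[1,n_k]$ to be exploited. The task is to construct, for each $j$, a sequence $\bq^{(j)}\in Gen_j(\bp)$ together with a first-order sentence $\psi_j\in L$ such that $Pr[M^n_{\bq^{(j)}}\models\psi_j]$ simultaneously has $\limsup=1$ and $\liminf=0$ along subsequences of $n$. The approach imitates the counterexamples built in \cite{LuSh} to prove Theorem~\ref{ThmPrevL}(2), but with $\bp$ now prescribed and with the freedom limited to the operations defining $Gen_j$. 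A natural blueprint is to pass to a rapidly growing sub-sub-sequence $(m_k)$ of $(n_k)$ and, on each block $(m_k,m_{k+1}]$, use the guaranteed mass of $\bp$ to arrange that a designated local configuration, described by a short quantifier-free formula, is present with probability close to $1$ at scale $m_{k+1}$ and close to $0$ at scale $m_k$; then $\psi_j$ is the existential closure of that configuration.

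The main obstacle is the $Gen_3$ case, since one may only switch each coordinate $q^{(3)}_i$ to $0$ or leave it at $p_i$, with no continuous tuning. The construction must select, from among the indices where $p_i$ is substantial, a subfamily whose joint behaviour forces full oscillation of the probability of $\psi_3$; the failure of $(*)$ is precisely what guarantees that enough substantial indices remain available in each $[1,n_k]$. Once this case is settled, the $Gen_2$ case is immediate by inclusion $Gen_3\subseteq Gen_2$, and the $Gen_1$ case is handled by an analogous but separate construction in which long blocks of inserted zeros are used to shift the effective scales to produce the desired oscillation. Verification that the chosen sentences genuinely have probabilities approaching both $0$ and $1$ will rely on the standard second-moment or Poisson-approximation estimates for the expected counts of the relevant substructures.
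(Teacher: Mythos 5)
Your top-level skeleton is the same as the paper's: the reduction to proving "$(*)\Rightarrow$ $j$-hereditary 0-1 law" plus "$\lnot(*)\Rightarrow$ failure of the $j$-hereditary weak convergence law", and your verification that $(*)$ is inherited by every $\bq\in Gen_j(\bp)$ (so that Theorem \ref{ThmPrevL}(1) applies) is correct and is exactly how the paper disposes of the easy direction. The problem is that for the hard direction you only give a blueprint, and one concrete element of that blueprint is unworkable. You propose to take a local configuration "described by a short quantifier-free formula" and let $\psi_j$ be its existential closure. For fixed $\bq$, the restriction of $M^n_{\bq}$ to $[m]$ (with $m\leq n$) has exactly the distribution of $M^m_{\bq}$, and an existential sentence whose matrix is quantifier-free is preserved when passing from an induced subgraph to the whole graph; hence $Pr[M^n_{\bq}\models\psi_j]$ is non-decreasing in $n$, converges, and can never oscillate between $0$ and $1$. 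The sentences that witness the failure must contain universal clauses: in the paper these are valency/isolation conditions ($\psi_k$ asserting a chain of triangles whose vertices have prescribed valencies in the $j=1$ case; "there is an isolated vertex" or "there is a connected component which is a path of length $k$" in the $j=3$ case), and it is precisely the isolation clauses that interact with the hypothesis $\prod_{l\leq n}(1-p_l)\leq n^{-\epsilon}$ to push the probability to $0$.

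Beyond that, the two constructions you defer are where the real content lies, and the ideas needed are absent. For $Gen_3$, failure of $(*)$ only gives logarithmic mass ($\sum_{l\leq n}-\log(1-p_l)\geq\epsilon\log n$), not a supply of "substantial indices"; the paper must split into two cases according to whether $\sum_{l\leq n}p_l$ is polynomially large. If it is, isolated vertices already oscillate (zero out long blocks to create them, keep $\bp$ on blocks to kill them in expectation); if it is not, one needs the path sentence with $k>6/\epsilon$, together with a degree bound (no vertex has more than $8n^{2\delta}$ neighbours with high probability, proved via a Poisson/Chernoff coupling) to bound the number of $k$-paths by $8^k n^{1+k\epsilon/3}$, so that the first moment with isolation probability at most $n^{-k\epsilon/2}$ tends to $0$. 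For $Gen_1$, the missing idea is the structural use of the inserted zeros: the paper arranges the nonzero indices to be $l^*$, $2l^*$ and otherwise $\equiv 1\pmod{3l^*+2}$ ("proper" sequences), which forces every triangle to be of the form $\{l,l+l^*,l+2l^*\}$, so the candidate witnesses for $\psi_k$ number only $O(n)$ and a first-moment bound of the form $p^*_{\bq}\cdot 4\cdot n^{1-\epsilon k/2}$ closes the argument; without some such rigidity the count of potential witnesses is polynomial of high degree and the logarithmic mass from $\lnot(*)$ is far too weak. Your second-moment remark covers only the "probability near $1$" halves (and there the paper in fact just uses independence of far-apart witnesses), not these "probability near $0$" halves, which are the genuinely delicate steps.
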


Note that the 0-1 law implies the convergence law which in turn implies the weak convergence law. Hence it is enough to prove the "if" direction for the 0-1 law and the "only if" direction for the weak convergence law. Also note that the "if" direction is an immediate conclusion of Theorem \ref{ThmPrevL} (in the case $j=1$ it is stated in \cite{LuSh} as a corollary at the end of section 3). The case $j=1$ is proved in section 2, and the case $j\in\{2,3\}$ is proved in section 3. In section 4 we deal with the case $U^*(\bp):=\{i:p_i=1\}$ is not empty. We give an almost full analysis of the hereditary $0-1$ law in this case as well. The only case which is not fully characterized is the case $j=1$ and $|U^*(\bp)|=1$. We give some results regarding this case in section 5. The case $j=1$ and $|U^*(\bp)|=1$ and the case that the successor relation belongs to the dictionary, will be dealt with in \cite{Sh}. The following table summarizes the results in this article regarding the $j$-hereditary laws.

\begin{tabular}{|c||c|c|c|c|}
\hline
&\multicolumn{1}{c|}{$|U^*|=\infty$} & $2 \leq |U^*|< \infty$ & $|U^*|=1$ & \multicolumn{1}{c|}{$|U^*|=0$} \\
\hline
\hline

&  & The 0-1 law holds & See & \\
$j=1$ &  & $\Updownarrow$ & section & $\lim_{n\to\infty}\frac{\log(\prod_{i=1}^n(1-p_i))}{\log n}=0$\\
&The weak & $\{l:0<p_l<1\}=\emptyset$ & 5 & $\Updownarrow$\\
\cline{1-1}\cline{3-4}

&&\multicolumn{2}{c|}{The 0-1 law holds} & The 0-1 law holds\\
$j=2$ & convergence  & \multicolumn{2}{c|}{$\Updownarrow$} & $\Updownarrow$\\
& &\multicolumn{2}{c|}{$|\{l:p_l>0\}|\leq1$} &  The convergence law holds\\
\cline{1-1}\cline{3-4}

&  law fails & \multicolumn{2}{c|}{The 0-1 law holds} & $\Updownarrow$ \\
$j=3$ &&\multicolumn{2}{c|}{$\Updownarrow$} & The weak convergence law holds\\
&& \multicolumn{2}{c|}{$\{l:0<p_l<1\}=\emptyset$} &\\
\hline
\end{tabular}

\begin{conv}
Formally speaking Definition \ref{DefM} defines a probability on the
space of subsets of $G^n:=\{G:G\text{ is a graph with vertex set }[n]\}$. If $H$ is a
subset of $G^n$ we denote its probability by $Pr[M^n_{\bp}\in H]$. If
$\phi$ is a sentence in some logic we write
$Pr[M^n_{\bp}\models\phi]$ for the probability of
$\{G\in G^n:G\models\phi\}$. Similarly if $A_n$ is some property of
graphs on the set of vertexes $[n]$, then we write $Pr[A_n]$ or $Pr[A_n\text{ holds in }M^n_{\bp}]$ for the
probability of the set $\{G\in G^n:G\text{ has the property
}A_n\}$.
\end{conv}

\begin{nota}
\begin{enumerate}
\item $\bN$ is the set of natural numbers (including $0$).
\item $n,m,r,i,j$ and $k$ will denote natural numbers. $l$ will denote a member of $\bN^*$ (usually an index).
\item $p,q$ and similarly $p_l,q_l$ will denote probabilities i.e. reals in $[0,1]$.
\item $\epsilon,\zeta$ and $\delta$ will denote positive reals.
\item $L=\{\thicksim\}$ is the vocabulary of graphs i.e $\thicksim$ is a binary relation symbol. All $L$-structures are assumed to be graphs i.e. $\backsim$ is interpreted by a symmetric non-reflexive binary relation.
\item If $x\thicksim y$ holds in some graph $G$, we say that $\{x,y\}$ is an edge of $G$ or that $x$ and $y$ are "connected" or "neighbors" in $G$.
\end{enumerate}
\end{nota}

\section{Adding zeros}
In this section we prove theorem \ref{ThmMain} for $j=1$.
As the "if" direction is immediate from Theorem \ref{ThmPrevL} it remains to prove that if $(*)$ of \ref{ThmMain} fails then the 0-1 law for $L$ fails for some $\bq\in Gen_1(\bp)$. In fact we will show that it fails "badly" i.e. for some $\psi\in L$, $Pr[M^n_{\bq}\models\psi]$ approaches both $0$ and $1$ simultaneously. Formally:

\begin{df}
\begin{enumerate}
    \item
    Let $\psi$ be a sentence in some logic $\gL$, and $\bq=(q_1,q_2,...)$ be a series of probabilities. We say that $\psi$ holds infinitely often in $M^n_{\bar{q}}$ if $\limsup_{n\to\infty}Prob[M^n_{\bar{q}}\models\psi]=1$.
    \item
    We say that the 0-1 law for $\gL$ strongly fails in $M^n_{\bar{q}}$, if for some $\psi\in\gL$ both $\psi$ and $\lnot\psi$ hold infinitely often in $M^n_{\bar{q}}$.
\end{enumerate}
\end{df}

Obviously the 0-1 law strongly fails in some $M^n_{\bar{q}}$ iff $M^n_{\bar{q}}$ does not satisfy the weak semi 0-1 law. Hence in order to prove Theorem \ref{ThmMain} for $j=1$ it is enough if we prove:

\begin{lem} \label{LemSec1}
Let $\bp=(p_1,p_2,...)$ be such that $0\leq p_i<1$ for all $i>0$, and assume that $(*)$ of \ref{ThmMain} fails. Then for some $\bar{q}\in Gen_1(\bar{p})$ the 0-1 law for $L$ strongly fails in $M^n_{\bar{q}}$.
\end{lem}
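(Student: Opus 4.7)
The plan is to exhibit $\bq \in Gen_1(\bp)$ and a first-order sentence $\psi$ for which $Pr[M^n_{\bq}\models\psi]$ has $\limsup = 1$ and $\liminf = 0$. The natural candidate is the isolated-vertex sentence $\psi_{iso} := \exists x\,\forall y\,(y\neq x \to \neg(x\sim y))$, since the probability that a given vertex of $M^n_{\bq}$ is isolated factors through products of the form $\prod(1-p_i)$, which is exactly what $(*)$ controls.

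Write $P(k) := \prod_{i=1}^k(1-p_i)$. The failure of $(*)$ supplies $\epsilon > 0$ and an increasing sequence $m_1 < m_2 < \cdots$ with $P(m_k) \leq m_k^{-\epsilon}$. For $\bq \in Gen_1(\bp)$ with associated increasing $f:\bN\to\bN$ (so $q_{f(i)}=p_i$ and $q_l=0$ otherwise), set $I(k) := |\{i : f(i)\le k\}|$; then $\prod_{l\le k}(1-q_l) = P(I(k))$, and vertex $j$ of $M^n_{\bq}$ is isolated with probability exactly $P(I(j-1))\cdot P(I(n-j))$.

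We construct $\bq$ inductively in alternating ``blocks'' and ``gaps'': the $k$-th block places $p_{N_{k-1}+1},\ldots,p_{N_k}$ at $N_k - N_{k-1}$ consecutive positions (with $N_k$ drawn from $(m_k)$, so $P(N_k)\le N_k^{-\epsilon}$), followed by a gap of $G_k$ zero positions before the $(k+1)$-th block. Two opposing behaviors emerge at suitable scales. First, taking $n$ deep inside the $k$-th gap, most middle $j$ satisfy $I(j-1) = I(n-j) = N_k$, giving isolation probability $\approx P(N_k)^2$ and expected isolated count $\gtrsim G_k\cdot P(N_k)^2$; choosing $G_k \gg P(N_k)^{-2}$ forces this to infinity, and the explicit joint-isolation identity
\[
Pr[j_1,j_2\text{ both iso}] \;=\; \frac{P(I(j_1-1))P(I(n-j_1))\,P(I(j_2-1))P(I(n-j_2))}{1-q_{|j_1-j_2|}}
\]
exhibits positive correlation of isolation events, so a Chebyshev-type second-moment argument promotes this to $Pr[M^n_{\bq}\models\psi_{iso}] \to 1$ along this subsequence of $n$.

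The main obstacle is the complementary direction: choosing $n$ at the end of the $(k+1)$-th block, where $I(n-1)=N_{k+1}$ and $P(I(n-1))\le n^{-\epsilon}$, and showing the expected isolated count tends to $0$. A naive boundary estimate yields only $n\cdot P(N_{k+1})\sim n^{1-\epsilon}$, which vanishes only when $\epsilon>1$. For arbitrary $\epsilon>0$ we partition the convolution $\sum_j P(I(j-1))P(I(n-j))$ according to the level sets of the step function $I$, bounding each flat-interval contribution separately and exploiting that for $j$ away from both ends both factors are simultaneously at the small level $P(N_{k+1})$; with carefully chosen relations among $G_k$, the block lengths $N_k - N_{k-1}$, and the target scale $n$, this brings the sum below $o(1)$. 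Iterating the balancing across $k$ produces the desired $\bq \in Gen_1(\bp)$ for which $Pr[M^n_{\bq}\models\psi_{iso}]$ oscillates, witnessing strong failure of the $0$-$1$ law.
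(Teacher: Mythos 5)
Your positive direction (long gaps make the expected number of isolated vertices blow up, and the mild positive correlation $\Pr[j_1,j_2\text{ both iso}]=\pi_{j_1}\pi_{j_2}/(1-q_{|j_1-j_2|})$ lets a second-moment argument push $\Pr[\psi_{iso}]\to 1$) is sound and parallels the paper's Claim 2.9. The fatal gap is the negative direction, and it is not repairable by "carefully chosen relations among $G_k$, the block lengths and the target scale": the isolated-vertex sentence cannot witness strong failure when $(*)$ fails with a small exponent. Failure of $(*)$ only gives some $\epsilon>0$ (possibly tiny) with $\prod_{i\le m}(1-p_i)\le m^{-\epsilon}$ along a possibly very sparse subsequence. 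Take, say, $\epsilon=1/4$, $p_i=1/2$ for $i\in(a_t,a_t+L_t]$ with $L_t=\lceil\epsilon\log_2 a_t\rceil$ and $p_i=0$ otherwise, with $a_t$ growing so fast that $L_1+\cdots+L_t\le(\epsilon+o(1))\log_2 a_t$; then $(*)$ fails. For any $\bq\in Gen_1(\bp)$ let $\nu(m)$ count the nonzero entries among $q_1,\dots,q_m$, so the expected number of isolated vertices in $M^n_{\bq}$ is $E_n=\sum_{j}2^{-\nu(j-1)-\nu(n-j)}$. Since inserting zeros can only push the entries of the $t$-th burst to positions beyond $a_t$, for $t$ maximal with $a_t\le n$ the $a_t$ vertices $j\in(n-a_t,n]$ satisfy $\nu(n-j)\le L_1+\cdots+L_{t-1}$ and $\nu(j-1)\le L_1+\cdots+L_t$, whence $E_n\ge a_t\cdot 2^{-2(L_1+\cdots+L_t)}\ge a_t^{\,1-2\epsilon-o(1)}\to\infty$ for \emph{every} $\bq\in Gen_1(\bp)$ and \emph{every} large $n$. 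Since all nonzero $q_l$ equal $1/2$, your joint-isolation identity gives $E[X^2]\le E[X]+2E[X]^2$, so Paley--Zygmund yields $\Pr[M^n_{\bq}\models\psi_{iso}]\ge 1/3$ for all large $n$; thus $\liminf_n\Pr[\psi_{iso}]>0$ and $\psi_{iso}$ never strongly fails for this $\bp$. Even for smooth decay $\prod_{i\le m}(1-p_i)\approx m^{-\epsilon}$ the convolution $\sum_j Q(j-1)Q(n-j)$ is of order $n^{1-2\epsilon}$, and adding zeros only increases each factor, so the first-moment route is blocked whenever $\epsilon\le 1/2$; your "naive boundary estimate" remark already points at this, but the level-set repartition cannot overcome it.

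This is precisely why the paper does not use isolated vertices. It uses the sentence $\psi_k$ asserting the existence of an isolated chain of $k$ triangles, with $k$ even and $k\epsilon>2$ chosen \emph{after} $\epsilon$: the "no extraneous neighbors" condition is then imposed on $k+1$ vertices simultaneously, the relevant edge sets being disjoint, so the first-moment bound becomes roughly $n\cdot\bigl(\prod_{l\le cn}(1-p_l)\bigr)^{k}\le Cn^{1-k\epsilon/2}\to 0$, the exponent $k$ beating the linear number of candidates. Two further devices make this work: the "properness" structure (positive entries only at $l^*$, $2l^*$ and residues $1$ mod $l^{**}=3l^*+2$) forces every triangle to be of the arithmetic form $\{l,l+l^*,l+2l^*\}$, so chains are recognizable and there are only $O(n)$ candidates; and the appended entries of $\bp$ are spread periodically with period $l^{**}$, so every initial segment of $\bq$ of length $m$ already contains about $m/l^{**}$ entries of $\bp$ --- the uniform spreading your block-and-gap construction lacks, and which the bursty example above shows is essential if one wants initial-segment products to be small at all scales rather than only at the block ends.
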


In the remainder of this section we prove Lemma \ref{LemSec1}. We do so by inductively constructing $\bq$, as the limit of a series of finite sequences. Let us start with some basic definitions:
\begin{df} \label{DefP}
\begin{enumerate}
    \item
    Let $\gP$ be the set of all, finite or infinite, sequences of probabilities. Formally each $\bar{p}\in\gP$ has the form $\langle p_l:0<l<
    n_{\bp}\rangle$ where each $p_l\in[0,1]$ and $n_{\bp}$ is either $\omega$ (the first infinite ordinal) or a member of $\bN\setminus\{0,1\}$. Let $\gP^{inf}=\{\bp\in\gP:n_{\bp}=\omega\}$, and $\gP^{fin}:=\gP\setminus\gP^{inf}$.
    \item
    For $\bq\in\gP^{fin}$ and increasing $f:[n_{\bq}]\to\bN$, define $\bq^f\in\gP^{fin}$ by $n_{\bq^f}=f(n_{\bq})$, $(\bq^f)_l=q_i$ if $f(i)=l$ and $(\bq^f)_l=0$ if $l\not\in Im(f)$.
    \item
    For $\bp\in\gP^{inf}$ and $r>0$, let $Gen_1^r(\bp):=\{\bq\in\gP^{fin}:\text{ for some increasing } f:[r+1]\to\bN, (\bp|_{[r]})^f=\bq\}$.
    \item
    For $\bp,\bp'\in\gP$ denote $\bp\vartriangleleft\bp'$ if $n_{\bp}<n_{\bp'}$ and for each $l<n_{\bp}$, $p_l=p'_l$.
    \item
    If $\bp\in\gP^{fin}$ and $n>n_{\bp}$, we can still consider $M^n_{\bp}$ by putting $p_l=0$ for all $l\geq n_{\bp}$.
\end{enumerate}
\end{df}

\begin{obs} \label{Obs}
\begin{enumerate}
    \item
    Let $\langle \bp_i:i\in\bN\rangle$ be such that each $\bp_i\in\gP^{fin}$, and assume that $i<j\in\bN\Rightarrow\bp_i\vartriangleleft\bp_j$. Then
    $\bp=\cup_{i\in\bN}\bp_i$ (i.e. $p_l=(p_i)_l$ for some $\bp_i$ with $n_{\bp_i}>l$) is well defined and $\bp\in\gP^{inf}$.
    \item
    Assume further that $\langle r_i:i\in\bN\rangle$ is non-decreasing and unbounded, and that $\bp_i\in Gen_1^{r_i}(\bp')$ for some fixed $\bp'\in\gP^{inf}$, then $\cup_{i\in\bN}\bp_i\in Gen_1(\bp')$.
\end{enumerate}
\end{obs}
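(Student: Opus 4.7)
Part (1) follows directly from the definition of $\vartriangleleft$. Since $\bp_i \vartriangleleft \bp_{i+1}$ forces $n_{\bp_i} < n_{\bp_{i+1}}$ for every $i$, the lengths tend to $\infty$. For each $l \in \bN^*$ I pick any $i$ with $n_{\bp_i} > l$ and set $p_l := (\bp_i)_l$; the compatibility clause of $\vartriangleleft$ makes this independent of the chosen $i$, so $\bp := \langle p_l : 0 < l < \omega \rangle$ is well defined and belongs to $\gP^{inf}$.

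For part (2), the plan is to exhibit an increasing $f : \bN^* \to \bN^*$ witnessing $\bp := \cup_i \bp_i \in Gen_1(\bp')$. Since passing to a cofinal subsequence leaves the union $\cup_i \bp_i$ unchanged, I first replace $\langle \bp_i, r_i \rangle$ by a subsequence on which $r_i$ is strictly increasing, eliminating plateaus. The given witnesses $f_i$ for $\bp_i \in Gen_1^{r_i}(\bp')$ need not be mutually compatible, because they can freely permute where to ``place'' the zero entries of $\bp'$, so I discard them and build $f$ from scratch using only the positions of non-zero entries, which are rigid. Enumerate $\{j : p'_j > 0\}$ as $j_1 < j_2 < \cdots$; strict monotonicity of any admissible $f_i$ (with $r_i \geq j_m$) forces $f_i(j_m)$ to equal the $m$-th non-zero position of $\bp_i$, so I define $f(j_m)$ to be the $m$-th non-zero position of $\bp$, a value that stabilizes for all large $i$.

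It remains to extend $f$ to the indices $j$ with $p'_j = 0$ by placing each $f(j)$ at a zero position of $\bp$ in the appropriate gap, so as to keep $f$ strictly increasing. This reduces to a counting check in each interval $(f(j_m), f(j_{m+1}))$: one needs to fit $j_{m+1} - j_m - 1$ zero-valued indices into $f(j_{m+1}) - f(j_m) - 1$ available zero positions, and the inequality $f_i(j_{m+1}) - f_i(j_m) \geq j_{m+1} - j_m$ (strict monotonicity of $f_i$, passed to the limit) supplies at least the required slack. The initial gap before $f(j_1)$ and the possibly infinite terminal gap (when $\{j_m\}$ is finite) are handled by the same argument. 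The only obstacle I foresee is precisely this bookkeeping around the zero entries; once the rigidity of the non-zero positions is isolated, the remainder is routine combinatorial housekeeping to verify that the resulting $f$ satisfies $\bp_l = p'_j$ for $l = f(j)$ and $\bp_l = 0$ for $l \notin Im(f)$.
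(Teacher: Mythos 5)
Your proposal is correct. The paper states Observation \ref{Obs} without proof, and your argument --- part (1) from the compatibility built into $\vartriangleleft$, and part (2) by isolating the rigidity of the nonzero positions (any witness $f_i$ must send the $m$-th nonzero index of $\bp'$ to the $m$-th nonzero position of $\bp_i$, which stabilizes to that of $\bp$) and then using monotonicity of the $f_i$ to get the gap inequalities $f(j_{m+1})-f(j_m)\geq j_{m+1}-j_m$ needed to place the zero coordinates, including the initial and terminal segments --- is precisely the routine verification the authors leave to the reader.
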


We would like our graphs $M^n_{\bq}$ to have a certain structure, namely that  the number of triangles in $M^n_{\bq}$ is $o(n)$ rather then say $o(n^3)$. we can impose this structure by making demands on $\bq$. This is made precise by the following:

\begin{df} \label{DefProper}
A sequence $\bq\in\gP$ is called proper (for $l^*$), if:
\begin{enumerate}
    \item $l^*$ and $2l^*$ are the first and second members of $\{0<l<n_{\bq}:q_l>0\}$.
    \item Let $l^{**}=3l^*+2$. If $l<n_{\bq}$, $l\not\in\{l^*,2l^*\}$ and $q_l>0$, then $l\equiv 1 \pmod l^{**}$.
\end{enumerate}

For $\bq,\bq'\in\gP$ we write $\bq\vartriangleleft^{prop}\bq'$ if $\bq\vartriangleleft\bq'$, and both $\bq$ and $\bq'$ are proper.
\end{df}

\begin{obs} \label{Obs1}
\begin{enumerate}
    \item If $\langle \bp_i:i\in\bN\rangle$ is such that each $\bp_i\in\gP$, and $i<j\in\bN\Rightarrow\bp_i\vartriangleleft^{prop}\bp_j$, then $\bp=\cup_{i\in\bN}\bp_i$ is proper.
    \item Assume that $\bq\in\gP$ is proper for $l^*$ and $n\in\bN$. Then the following event holds in $M^n_{\bq}$ with probability 1:
        \begin{itemize}
            \item[$(*)_{{\bq},l^*}$] If $m_1,m_2,m_3\in[n]$ and $\{m_1,m_2,m_3\}$ is a triangle in $M^n_{\bq}$, then $\{m_1,m_2,m_3\}=\{l,l+l^*,l+2l^*\}$ for some $l>0$.
        \end{itemize}
\end{enumerate}
\end{obs}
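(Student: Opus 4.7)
For part (1), I would unpack the definitions. Since $\bp_i \vartriangleleft^{prop} \bp_j$ means $\bp_i$ is an initial segment of $\bp_j$ with both proper, the first two positive-probability indices of $\bp_i$ coincide with those of $\bp_j$; so all $\bp_i$ share a common parameter $l^*$ (hence a common $l^{**} = 3l^*+2$). The union $\bp := \cup_i \bp_i$ lies in $\gP^{inf}$ by Observation \ref{Obs}(1); its first two positive-probability indices are $l^*$ and $2l^*$ (already witnessed in $\bp_0$), and any other $l$ with $p_l > 0$ satisfies $p_l = (\bp_i)_l > 0$ for some $i$ with $n_{\bp_i} > l$, forcing $l \equiv 1 \pmod{l^{**}}$ by properness of $\bp_i$. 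Hence $\bp$ is proper for $l^*$.

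For part (2), the plan is an arithmetic case analysis on the three differences of a would-be triangle. Fix $m_1 < m_2 < m_3$ in $[n]$ and set $d_1 := m_2 - m_1$, $d_2 := m_3 - m_2$, $d_3 := d_1 + d_2$; the event that $\{m_1, m_2, m_3\}$ is a triangle of $M^n_{\bq}$ has probability $q_{d_1} q_{d_2} q_{d_3}$. Since $[n]$ has only finitely many 3-subsets, a union bound reduces $(*)_{\bq, l^*}$ to showing that whenever all three $q_{d_i}$ are positive, we must have $d_1 = d_2 = l^*$. By properness of $\bq$, each such $d_i$ lies in $A := \{l^*, 2l^*\}$ or in $B := \{l : l > 2l^*,\ l \equiv 1 \pmod{l^{**}}\}$; in the latter case $d_i \geq l^{**} + 1 = 3l^* + 3$, which is the least element of $B$.

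I would then split into three cases. If $d_1, d_2 \in A$, the subcase $d_1 = d_2 = l^*$ gives $d_3 = 2l^* \in A$ and yields the desired pattern, while the remaining subcases force $d_3 \in \{3l^*, 4l^*\}$; residue computations modulo $3l^* + 2$ (namely $3l^* \equiv -2$ and $4l^* \equiv l^* - 2$) show that neither $3l^*$ nor $4l^*$ can be $\equiv 1 \pmod{l^{**}}$, and both exceed $2l^*$, so neither is in $A \cup B$. If exactly one of $d_1, d_2$ lies in $B$ (say $d_1$), then $d_3 \equiv 1 + d_2 \pmod{l^{**}}$ with $d_2 \in \{l^*, 2l^*\} \subsetneq (0, l^{**})$, hence $d_3 \not\equiv 1$; moreover $d_3 \geq l^{**} + 1 + l^* > 2l^*$, so $d_3 \notin A \cup B$. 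If $d_1, d_2 \in B$, then $d_3 \equiv 2 \pmod{l^{**}}$ and $d_3 > 2l^{**} > 2l^*$, once more excluding $d_3$ from $A \cup B$. In every configuration other than $d_1 = d_2 = l^*$, some $q_{d_i} = 0$ and the contribution vanishes.

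The main obstacle is really just the residue arithmetic in the first case, most delicately in the subcase $d_1 = d_2 = 2l^*$: here $4l^* \equiv l^* - 2 \pmod{3l^*+2}$, so the inequality $l^* - 2 \not\equiv 1 \pmod{l^{**}}$ depends on $l^*$ lying outside a small exceptional set, which the construction of $\bq$ in Lemma \ref{LemSec1} must respect when selecting $l^*$. Once this arithmetic is verified, the rest of the argument is a routine finite union bound, using only that any forbidden difference pattern forces $q_{d_1} q_{d_2} q_{d_3} = 0$.
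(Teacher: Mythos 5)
The paper states this as an Observation and offers no proof, so the benchmark is the routine verification you attempt; your part (1) is correct, and your reduction of part (2) to the arithmetic claim ``$d_1,d_2,d_1+d_2$ all have positive probability $\Rightarrow d_1=d_2=l^*$'' (via a finite union bound) is exactly the right skeleton, with cases B and C and the subcase $d_1=l^*,d_2=2l^*$ handled correctly.

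However, there is a genuine gap in the subcase $d_1=d_2=2l^*$, and your two statements about it contradict each other: in the case analysis you assert that $4l^*$ can never be $\equiv 1 \pmod{l^{**}}$, while in the closing paragraph you admit this needs $l^*$ to avoid an exceptional set. The exceptional set is nonempty: $4l^*\equiv 1\pmod{3l^*+2}$ iff $3l^*+2$ divides $l^*-3$, i.e.\ iff $l^*=3$, where $4l^*=12\equiv 1\pmod{11}$ is precisely the least element of your set $B$. Since properness only constrains \emph{which} indices may carry positive probability, a sequence with $l^*=3$, $q_3,q_6,q_{12}>0$ and all other entries $0$ is proper, yet $\{m,m+6,m+12\}$ is a triangle with positive probability and is not of the form $\{l,l+l^*,l+2l^*\}$; so the Observation as stated actually fails in this corner case, and no argument can close the subcase without a repair. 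Your proposed escape --- that the construction in Lemma \ref{LemSec1} ``must respect'' this when selecting $l^*$ --- does not work as written: the Observation quantifies over all proper $\bq$, and in the paper's Case 1 of that construction $l^*$ is dictated by the first nonzero entries of $\bp$ (it is $l_1$ or $\lceil l_2/2\rceil$), so $l^*=3$ cannot be excluded by choice. The honest conclusion is that the statement needs a small repair rather than a cleverer proof: for instance replace $l^{**}=3l^*+2$ by $4l^*+1$ in Definition \ref{DefProper} (one checks that then none of $3l^*$, $4l^*$, $l^*+1$, $2l^*+1$, $2$ is $\equiv 1$), or additionally require positive indices outside $\{l^*,2l^*\}$ to exceed $4l^*$; with either fix your case analysis goes through verbatim and the later claims (\ref{ClaimProp}, \ref{ClaimNeg}) are unaffected.
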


We can now define the sentence $\psi$ for which we have failure of the 0-1 law.
\begin{df} \label{DefPsi}
Let $k$ be an even natural number. Let $\psi_k$ be the $L$ sentence "saying":
There exists $x_0,x_1,...,x_k$ such that:
\begin{itemize}
    \item $(x_0,x_1,...,x_k)$ is without repetitions.
    \item For each even $0\leq i < k$, $\{x_i,x_{i+1},x_{i+2}\}$ is a triangle.
    \item The valency of $x_0$ and $x_k$ is 2.
    \item For each even $0<i<k$ the valency of $x_i$ is 4.
    \item For each odd $0<i<k$ the valency of $x_i$ is 2.
\end{itemize}
If the above holds (in a graph $G$) we say that $(x_0,x_1,...,x_k)$ is a chain of triangles (in $G$).
\end{df}

\begin{df}
Let $n\in\bN$, $k\in\bN$ be even and $l^*\in[n]$. For $1\leq m < n-k\cdot l^*$ a sequence $(m_0,m_1,...,m_k)$ is called a candidate of type $(n,l^*,k,m)$ if it is without repetitions, $m_0=m$ and for each even $0\leq i<k$, $\{m_i,m_{i+1},m_{i+2}\}=\{l,l+l^*,l+2l^*\}$ for some $l > 0$. Note that for given $(n,l^*,k,m)$, there are at most $4$ candidates of type $(n,l^*,k,m)$ (and at most $2$ if $k>2$).
\end{df}

\begin{claim} \label{ClaimProp}
Let $n\in\bN$, $k\in\bN$ be even, and $\bq\in\gP$ be proper for $l^*$. For $1\leq m < n-k\cdot l^*$
let $E^n_{\bq,m}$ be the following event (on the probability space $M^n_{\bq}$): "No candidate of of type $(n,l^*,k,m)$ is a chain of triangles."
Then $M^n_{\bq}$ satisfies with probability 1: $M^n_{\bq}\models\lnot\psi_k$ iff $M^n_{\bq}\models\bigwedge_{1\leq m < n-k\cdot l^*}E^n_{\bq,m}$
\end{claim}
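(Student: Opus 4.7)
The plan is to condition on the probability-one event $(*)_{\bq,l^*}$ from Observation~\ref{Obs1}(2), under which every triangle of $M^n_{\bq}$ has vertex set of the canonical form $\{l,l+l^*,l+2l^*\}$. On this event the claimed biconditional becomes deterministic, so it suffices to prove pointwise that $M^n_{\bq}\models\psi_k$ iff some $E^n_{\bq,m}$ fails.

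The direction ``some $E^n_{\bq,m}$ fails $\Rightarrow \psi_k$'' is immediate from Definition~\ref{DefPsi}: a candidate witnessing the failure of $E^n_{\bq,m}$ is literally a tuple witnessing $\psi_k$. No structural input is used here.

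For the converse, assume $\psi_k$ holds, witnessed by a chain $(x_0,\ldots,x_k)$, and set $T_j=\{x_{2j-2},x_{2j-1},x_{2j}\}$ for $1\leq j\leq k/2$. Under the event $(*)_{\bq,l^*}$ each $T_j$ has the canonical form $\{l_j,l_j+l^*,l_j+2l^*\}$. Two consecutive triangles of the chain share exactly one vertex (two shared vertices would force a repetition among the $x_i$, violating the no-repetition clause of Definition~\ref{DefPsi}), and a short case check of canonical triangles shows this forces $|l_{j+1}-l_j|=2l^*$. Similarly no two $T_j,T_{j'}$ coincide, so the $l_j$ are pairwise distinct; hence the walk $j\mapsto l_j$ with steps of size $\pm 2l^*$ visits $k/2$ distinct points, which forces it to be strictly monotonic. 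It follows that the vertex set of the chain is an arithmetic progression $\{\mu,\mu+l^*,\mu+2l^*,\ldots,\mu+kl^*\}$ where $\mu=\min_j l_j$, and its inclusion in $[n]$ yields $\mu\leq n-kl^*$.

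Finally, since the chain-of-triangles property is preserved by the reversal $(x_0,\ldots,x_k)\mapsto(x_k,\ldots,x_0)$ and by the swap $x_0\leftrightarrow x_1$ (the two non-shared vertices of the extreme triangle $T_1$ being indistinguishable under the valency conditions of Definition~\ref{DefPsi}), I can by relabeling force $x_0$ to equal $\mu$; the resulting tuple is then a candidate of type $(n,l^*,k,m)$ with $m=x_0=\mu$, and it witnesses $\lnot E^n_{\bq,m}$. The main obstacle will be the structural combinatorial step in the third paragraph --- establishing that the base walk $j\mapsto l_j$ is strictly monotonic and that the chain's vertex set is consequently forced to be an arithmetic progression of common difference $l^*$; once that shape is pinned down, the rest is routine relabeling and unpacking of definitions.
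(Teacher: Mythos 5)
Your proposal is correct and follows essentially the same route as the paper: the paper's proof also conditions on the probability-one event $(*)_{\bq,l^*}$ of Observation \ref{Obs1}(2), notes the "only if" direction is immediate, and for the converse asserts that under this event "only a candidate can be a chain of triangles" --- your third and fourth paragraphs merely spell out the monotonicity/relabeling details that the paper leaves implicit. (The only blemish, that your argument ends with $\mu\leq n-k\cdot l^*$ while a candidate formally requires $m<n-k\cdot l^*$, is an off-by-one already present in the paper's own formulation, not a defect of your approach.)
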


\begin{proof}
The "only if" direction is immediate. For the "if" direction note that by \ref{Obs1}(2), with probability $1$, only a candidate can be a chain of triangles, and the claim follows immediately.
\end{proof}

The following claim shows that by adding enough zeros at the end of $\bq$ we can make sure that $\psi_k$ holds in $M^n_{\bq}$ with probability close to 1. Note that we do not make a "strong" use of the properness of $\bq$, i.e we do not use item (2) of Definition \ref{DefProper}.
\begin{claim} \label{ClaimPos}
Let $\bq\in\gP^{fin}$ be proper for $l^*$, $k\in\bN$ be even, and $\zeta>0$ be some rational. Then there exists $\bq'\in\gP^{fin}$ such that $\bq\vartriangleleft^{prop}\bq'$ and $Pr[M^{n_{\bq'}}_{\bq'}\models\psi_k]\geq1-\zeta$.
\end{claim}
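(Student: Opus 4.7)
I take $\bq'$ to be the extension of $\bq$ obtained by appending only zeros up to some large index $N = n_{\bq'}$. This automatically gives $\bq\vartriangleleft^{prop}\bq'$, since introducing no new positive entry preserves both clauses of Definition~\ref{DefProper}. The task then reduces to arranging, for $N$ large enough, a high probability that $M^N_{\bq'}\models\psi_k$, which I do by packing many disjoint, mutually independent candidate chains for $\psi_k$ into $[N]$.

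For a position $m$, consider the canonical candidate $(x_0,\dots,x_k) := (m, m+l^*, \dots, m+kl^*)$, whose prescribed triangles $\{m+2jl^*, m+(2j+1)l^*, m+(2j+2)l^*\}$ for $0\le j<k/2$ use only edges at distances $l^*$ or $2l^*$. The event $A_m$ that this is a chain of triangles in $M^N_{\bq'}$ factors as the intersection of two events depending on disjoint edge sets: $T_m$, that all $3k/2$ chain edges are present (probability $(q_{l^*}^2 q_{2l^*})^{k/2}$), and $V_m$, that no other edge is incident to any of the $k+1$ chain vertices. Writing $S := \{l<n_{\bq} : q_l>0\}$ and using $q'_l=0$ for $l\ge n_{\bq}$, each chain vertex has at most $2|S|$ potential non-chain neighbors, so $Pr[V_m]$ is a finite product of factors $(1-q_l)$ with $l\in S$ and satisfies $Pr[V_m]\ge \prod_{l\in S}(1-q_l)^{2(k+1)}>0$ (using the standing assumption, propagated from the hypothesis on $\bp$ in Lemma~\ref{LemSec1}, that $q_l<1$ whenever $q_l>0$). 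Thus $Pr[A_m]\ge \alpha$ for a positive constant $\alpha = \alpha(\bq,k)$, uniformly in $m$ with $m-1\ge n_{\bq}$ and $N-(m+kl^*)\ge n_{\bq}$.

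Place candidates at $m_j := n_{\bq}+1+(j-1)D$ for $j=1,\dots,J$, with spacing $D := kl^*+n_{\bq}+1$, and set $N := m_J+kl^*+n_{\bq}$. The spacing makes any two chain vertices from distinct candidates at distance $\ge n_{\bq}$, so these distances are outside $S$, and hence the edge sets on which the events $A_{m_j}$ depend are pairwise disjoint; the events are therefore mutually independent. Choosing $J$ so that $(1-\alpha)^J<\zeta$ gives
\[
  Pr\bigl[M^N_{\bq'}\not\models\psi_k\bigr] \le Pr\Bigl[\bigcap_{j=1}^J \neg A_{m_j}\Bigr] = \prod_{j=1}^J Pr[\neg A_{m_j}] \le (1-\alpha)^J < \zeta.
\]
The one point that needs care is the positivity of $Pr[V_m]$: some non-chain potential neighbors of a chain vertex $x_i$ may themselves be chain vertices $x_{i'}$ (this occurs precisely when $|i-i'|l^*\in S$, which can happen because $S\setminus\{l^*,2l^*\}$ consists of indices $\equiv 1 \pmod{l^{**}}$, some of which may coincide with multiples of $l^*$), but every such ``internal'' potential edge still contributes a single factor $(1-q_l)>0$ to the product, so the bound remains valid and the rest of the argument is routine.
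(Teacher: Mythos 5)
Your proof is correct and follows essentially the same route as the paper's: extend $\bq$ by zeros only (which preserves properness), plant many translates of the canonical candidate $(m,m+l^*,\dots,m+kl^*)$ at spacing larger than $n_{\bq}$ so the corresponding events are independent, lower-bound each event's probability by a constant $\alpha(\bq,k)>0$ of the form $(q_{l^*})^k(q_{2l^*})^{k/2}\prod_l(1-q_l)^{2(k+1)}$, and take the window long enough that $(1-\alpha)^J<\zeta$. Your extra remarks on internal non-chain edges and on $q_l<1$ only make explicit points the paper leaves implicit.
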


\begin{proof}
For $n>n_{\bq}$ denote by $\bq^n$ the member of $\gP$ with $n_{\bq^n}=n$ and $(q^n)_l$ is $q_l$ if $l< n_{\bq}$ and $0$ otherwise. Note that $\bq\vartriangleleft^{prop}\bq^n$, hence if we show that for $n$ large enough we have $Pr[M^n_{\bq^n}\models\psi_k]\geq1-\zeta$ then we will be done by putting $\bq'=\bq^n$. Note that (recalling Definition \ref{DefP}(5)) $M^n_{\bq}=M^n_{\bq^n}$ so below we may confuse between them.  Now set $n^*=\max\{n_{\bq},k\cdot l^*\}$. For any $n>n^*$ and $1\leq m\leq n-n^*$ consider the sequence $s(m)=(m,m+l^*,m+2l^*,...,m+k\cdot l^*)$ (note that $s(m)$ is a candidate of type $(n,l^*,k,m)$). Denote by $E_m$ the event that $s(m)$ is a chain of triangles (in $M^n_{\bq}$). We then have:
$$Pr[M^n_{\bq}\models E_m]\geq (q_{l^*})^k\cdot(q_{2l^*})^{k/2}\cdot(\prod_{l=1}^{n_{\bq}-1}(1-p_l))^{2(k+1)}.$$
Denote the expression on the right by $p^*_{\bq}$ and note that it is positive and  depends only on $k$ and $\bq$ (but not on $n$). Now assume that $n>6\cdot n^*$ and that $1\leq m<m'\leq n-n^*$ are such that $m'-m > 2\cdot n^*$. Then the distance between the sequences $s(m)$ and $s(m')$ is larger than $n_{\bq}$ and hence the events $E_m$ and $E_{m'}$ are independent. We conclude that $Pr[M^n_{\bq}\not\models\psi_k] \leq (1-p^*_{\bq})^{n/(2\cdot n^*+1)}\rightarrow_{n\to\infty} 0$
and hence by choosing $n$ large enough we are done.
\end{proof}

The following claim shows that under our assumptions we can always find a long initial segment $\bq$ of some member of $Gen_1(\bp)$ such that $\psi_k$ holds in $M^n_{\bq}$ with probability close to 0. This is where we make use of our assumptions on $\bp$ and the properness of $\bq$.

\begin{claim} \label{ClaimNeg}
Let $\bp\in\gP^{inf}$, $\epsilon>0$ and assume that for an unbounded set of $n\in\bN$ we have $\prod_{l=1}^n(1-p_l)\leq n^{-\epsilon}$. Let $k\in\bN$ be even such that $k\cdot\epsilon > 2$. Let $\bq\in Gen_1^r(\bp)$ be proper for $l^*$, and $\zeta>0$ be some rational. Then there exists $r'>r$ and $\bq'\in Gen_1^{r'}(\bp)$ such that $\bq\vartriangleleft^{prop}\bq'$ and $Pr[M^{n_{\bq'}}_{\bq'}\models\lnot\psi_k]\geq1-\zeta$.
\end{claim}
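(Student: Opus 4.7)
The plan is to extend $\bq$ to $\bq'$ by inserting the probabilities $p_{r+1},\ldots,p_{r'}$ for a carefully chosen $r'$ at positions $\equiv 1\pmod{l^{**}}$ beyond $n_{\bq}$, and then padding with many zeros. Concretely, choose $n$ large with $\prod_{l=1}^n(1-p_l)\leq n^{-\epsilon}$, set $r'=n$, let $l_0$ be the smallest integer $\geq n_{\bq}$ that is $\equiv 1\pmod{l^{**}}$, and extend the $f$ witnessing $\bq\in Gen_1^r(\bp)$ by placing $p_{r+1},\ldots,p_n$ at $l_0,l_0+l^{**},\ldots,l_0+(n-r-1)l^{**}$; call the largest of these positions $M$. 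Finally set $n_{\bq'}:=N$ to be any integer $\geq 3M$. Then $\bq'\in Gen_1^{r'}(\bp)$, every new nonzero position is $\equiv 1\pmod{l^{**}}$ so $\bq\vartriangleleft^{prop}\bq'$, and $\prod_l(1-q'_l)=\prod_{l=1}^n(1-p_l)\leq n^{-\epsilon}$. It remains to show $Pr[M^N_{\bq'}\models\psi_k]<\zeta$ for $n$ large enough.

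By Claim \ref{ClaimProp} it suffices to union-bound, over candidates $c=(m_0,\ldots,m_k)$ of type $(N,l^*,k,m)$, the probability that $c$ forms a chain of triangles. For such $c$ and each $i\leq k$ let $E_i(c)$ be the event ``$m_i$ has no edge to any $m\in[N]\setminus c$''. The edge-sets involved for different $i$ are disjoint (since $c$ is without repetitions and the other endpoint lies outside $c$), so the $E_i(c)$ are mutually independent; moreover, if $c$ is a chain of triangles then the prescribed valencies of each $m_i$ are accounted for entirely by edges inside $c$, so every $E_i(c)$ must occur. Therefore
\[
Pr[c\text{ is a chain}]\;\leq\;\prod_{i=0}^k\prod_{m\in[N]\setminus c}(1-q'_{|m-m_i|}).
\]

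The padding $N\geq 3M$ ensures that for any \emph{middle} candidate, meaning one with every $m_i\in[M,2M]$, both directions $m_i\pm l$ for $l\in[1,M]$ lie in $[N]$, so each of the two products above captures the full support of $\bq'$ up to excluding at most $k$ candidate vertices, which costs only a constant factor $C$ depending on $l^*$, $q'_{l^*}$ and $q'_{2l^*}$. This yields $Pr[c\text{ chain}]\leq C^{k+1}\Pi^{2(k+1)}$, where $\Pi:=\prod_l(1-q'_l)\leq n^{-\epsilon}$; for the $O(M)$ boundary candidates at least one direction still captures the whole support, giving the weaker bound $Pr[c\text{ chain}]\leq C^{k+1}\Pi^{k+1}$. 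Summing over all $O(M)=O(n)$ candidates gives $Pr[M^N_{\bq'}\models\psi_k]=O(n^{1-\epsilon(k+1)})$, which tends to $0$ since $\epsilon(k+1)>\epsilon k>2>1$. Choosing $n$ large enough forces this below $\zeta$.

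The main technical obstacle is the treatment of boundary candidates, including those straddling $[1,n_{\bq}]$ or lying near the endpoints of $[1,N]$, where only one direction from a vertex may reach the support of $\bq'$. The factor-$3$ padding is exactly what guarantees at least one such direction suffices for every vertex, and the hypothesis $k\epsilon>2$ simply provides the slack needed to keep the exponent $1-\epsilon(k+1)$ negative.
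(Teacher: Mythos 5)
Your argument is essentially correct, but it takes a genuinely different route from the paper, and one detail of the write-up must be tightened. The paper's proof spreads the new probabilities $p_{r+1},p_{r+2},\dots$ over the \emph{whole} new interval $[n_{\bq},n)$ (one every $l^{**}$ positions), so for each chain vertex it can only use the non-edge events in one direction up to distance roughly $n/2$, which in terms of $\bp$ is the product of $1-p_l$ over about $n/(2l^{**})$ indices; this yields the bound $p^*_{\bq}\cdot 4\, n^{1-\epsilon k/2}$ and is exactly why the paper needs $k\epsilon>2$. You instead concentrate $p_{r+1},\dots,p_n$ in an initial block of length $\Theta(n)$ and pad with zeros, so that for \emph{every} vertex at least one full direction sees the entire support of $\bq'$; each vertex then contributes the full product $\Pi=\prod_{i\le n}(1-p_i)\le n^{-\epsilon}$ (up to a constant for the at most $k$ excluded chain vertices, whose distances are bounded multiples of $l^*$, so the correction is indeed a constant, though it may involve a few small-index $q'_l$ beyond $q'_{l^*},q'_{2l^*}$). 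This gives exponent $k+1$ per candidate instead of $k/2$, so your version only needs $\epsilon(k+1)>1$; the hypothesis $k\epsilon>2$ is more than enough. The verification that a chain of triangles forces each $m_i$ to have no neighbour outside the candidate, and the independence of the $E_i(c)$ (disjoint potential edges), are both correct, as is $\bq\vartriangleleft^{prop}\bq'$ and $\bq'\in Gen_1^{r'}(\bp)$.

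The point to fix: you set $n_{\bq'}=N$ to be ``any integer $\geq 3M$'' but then sum over ``$O(M)=O(n)$ candidates''. The number of candidates is $O(N)$, not $O(M)$, and the conclusion genuinely fails if $N$ is allowed to be arbitrarily large relative to $M$: a very long zero tail makes $\psi_k$ hold with probability close to $1$ (this is precisely the mechanism of Claim \ref{ClaimPos}), since the expected number of chains is of order $N\cdot\Pi^{2(k+1)}$, which blows up for huge $N$. So you must fix $N$ comparable to $M$, e.g.\ $N=3M$; with that choice the union bound is $O(N)\cdot C^{k+1}\Pi^{k+1}=O\bigl(n^{1-\epsilon(k+1)}\bigr)\to 0$ and the proof goes through as you intend.
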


\begin{proof}
First recalling Definition \ref{DefProper} let $l^{**}=3l^*+2$, and for $l\geq n_{\bq}$ define $r(l):=\lceil (l-n_{\bq}+1)/l^{**}\rceil$. Now
for each $n>n_{\bq}+l^{**}$ denote by $\bq_n$ the member of $\gP$ defined by:
\begin{displaymath}
(q_n)_l=
\left\{ \begin{array}{ll}
      q_l&0<l< n_{\bq}\\
      0&n_{\bq}\leq l<n \textrm{ and } l\not\equiv 1 \mod l^{**}\\
      p_{r+r(l)} & n_{\bq}\leq l<n \textrm{ and } l\equiv 1 \mod l^{**}.
\end{array}\right.
\end{displaymath}
Note that $n_{\bq_n}=n$, $\bq_n\in Gen_1^{r'}(\bp)$ where $r'=r+r(n-1)>r$ and $\bq\vartriangleleft^{prop}\bq_n$. Hence if we show that for some $n$ large enough we have $Pr[M^n_{\bq_n}\models\lnot\psi_k]\geq1-\zeta$ then we will be done by putting $\bq'=\bq_n$. As before let $n^*:=\max\{kl^*,n_{\bq}+l^*\}$. Now fix some $n>n^*$ and for $1\leq m < n-k\cdot l^*$ let $s(m)$ be some candidate of type $(n,l^*,k,m)$. Denote by $E=E(s(m))$ the event that $s(m)$ is a chain of triangles in $M^n_{\bq_n}$. We then have:
$$Pr[M^n_{\bq_n}\models E] \leq (q_{l^*})^k\cdot(q_{2l^*})^{k/2} \cdot (\prod_{n^*+1}^{\lfloor(n-n^*)/2\rfloor}(1-(q_i)_l))^{k}.$$
Now denote:
$$p^*_{\bq}:= (q_{l^*})^k\cdot(q_{2l^*})^{k/2} \cdot (\prod_{l=1}^{n^*}(1-(q_i)_l))^{-k}$$
and note that it is positive and does not depend on $n$. Together we get:
$$ Pr[M^n_{\bq_n}\models E] \leq p^* \cdot (\prod_{l=1}^{\lfloor(n-n^*)/2\rfloor}(1-(q_i)_l))^{k}\leq p^*_{\bq} \cdot (\prod_{l=1}^{\lfloor(n-n^*)/(2l^{**})\rfloor}(1-p_l))^{k}.$$
For each $1\leq m < n-k\cdot l^*$ the number of candidates of type $(n,l^*,k,m)$ is at most $4$, hence the total number of candidates is no more then $4n$. We get that the expected number (in the probability space $M^n_{\bq_n}$) of candidates which are a chain of triangles is at most $p^*_{\bq}\cdot (\prod_{l=1}^{\lfloor(n-n^*)/(2l^{**})\rfloor}(1-p_l))^{k}\cdot 4n$. Let $E^*$ be the following event: "No candidate is a chain of triangles". Then using Claim \ref{ClaimProp} and Markov's inequality we get: $$Pr[M^n_{\bq}\models\psi_k] = Pr[M^n_{\bq}\not\models E^*] \leq p^*_{\bq}\cdot (\prod_{l=1}^{\lfloor(n-n^*)/(2l^{**})\rfloor}(1-p_l))^{k}\cdot 4n.$$ Finally by our assumptions, for an unbounded $n$ we have $\prod_{l=1}^{\lfloor(n-n^*)/(2l^{**})\rfloor}(1-p_l)\leq (\lfloor(n-n^*)/(2l^{**})\rfloor)^{-\epsilon}$, and note that for $n$ large enough we have $(\lfloor(n-n^*)/(2l^{**})\rfloor)^{-\epsilon}\leq n^{-\epsilon/2}$. Hence for unbounded $n\in\bN$ we have $Pr[M^n_{\bq}\models\psi_k]\leq p^*_{\bq}\cdot 4\cdot n^{1-\epsilon\cdot k/2}$, and as $\epsilon\cdot k>2$ this tends to $0$ as $n$ tends to $\infty$, so we are done.
\end{proof}

We are now ready to prove Lemma \ref{LemSec1}. First as $(*)$ of \ref{ThmMain} does not hold we have some $\epsilon>0$ such that for an unbounded set of $n\in\bN$, we have $\prod_{l=1}^n(1-p_l)\leq n^{-\epsilon}$. Let $k\in\bN$ be even such that $k\cdot\epsilon>2$. Now
for each $i\in\bN$ we will construct a pair $(\bq_i,r_i)$ such that the following holds:
\begin{enumerate}
    \item
    For $i\in\bN$, $\bq_i\in Gen_1^{r_i}(\bp)$ and put $n_i:=n_{\bq_i}$.
    \item
    For $i\in\bN$, $\bq_i \vartriangleleft^{prop} \bq_{i+1}$.
    \item
    For each odd $i>0$, $Pr[M^{n_i}_{\bq_i}\models\psi_k]\geq1-\frac{1}{i}$ and $r_i=r_{i-1}$.
    \item
    For each even $i>0$, $Pr[M^{n_i}_{\bq_i}\models\lnot\psi_k]\geq1-\frac{1}{i}$ and $r_i>r_{i-1}$.
\end{enumerate}
Clearly if we construct such $\langle(\bq_i,r_i):i\in\bN\rangle$ then by taking $\bq=\cup_{i\in\bN}\bq_i$ (recall observation \ref{Obs}), we have $\bq\in Gen_1(\bp)$ and both $\psi_k$ and $\lnot\psi_k$ holds infinitely often in $M^n_{\bar{q}}$, thus finishing the proof. We turn to the construction of $\langle(\bq_i,r_i):i\in\bN\rangle$, and naturally we use induction on $i\in\bN$.

\textbf{Case 1:} $i=0$. Let $l_1<l_2$ be the first and second indexes such that $p_{l_i}>0$. Put $r_0:=l_2$. If $l_2\leq 2l_1$ define $\bq_0$ by:
\begin{displaymath}
(q_0)_l=
\left\{ \begin{array}{ll}
      p_l&l\leq l_1\\
      0&l_1\leq l\leq 2l_1\\
      p_{l_2} & l=2l_1.
\end{array}\right.
\end{displaymath}
Otherwise if $l_2>2l_1$ define $\bq_0$ by:
\begin{displaymath}
(q_0)_l=
\left\{ \begin{array}{ll}
      0&l< \lceil l_2/2\rceil\\
      p_{l_1}&l=\lceil l_2/2\rceil\\
      0&\lceil l_2/2\rceil<l< 2\lceil l_2/2\rceil\\
      p_{l_2}&l=2\lceil l_2/2\rceil.
\end{array}\right.
\end{displaymath}
clearly $\bq_0\in Gen_1^{r_0}(\bp)$ as desired, and note that $\bq_0$ is proper (for either $l_1$ or $\lceil l_2/2\rceil$).

\textbf{Case 2:} $i>0$ is odd. First set $r_i=r_{i-1}$. Next we use Claim \ref{ClaimPos} where we set: $\bq_{i-1}$ for $\bq$, $\frac{1}{i}$ for $\zeta$ and $\bq_i$ is the one promised by the claim. Note that indeed $\bq_{i-1} \vartriangleleft^{prop} \bq_{i}$, $\bq_i\in gen^{r_i}(\bp)$ and $Pr[M^{n_i}_{\bq_i}\models\psi_k]\geq1-\frac{1}{i}$.

\textbf{Case 3:} $i>0$ is even. We use Claim \ref{ClaimNeg} where we set: $\bq_{i-1}$ for $\bq$, $\frac{1}{i}$ for $\zeta$ and $(r_i,\bq_i)$ are $(r',\bq')$ promised by the claim. Note that indeed $\bq_{i-1} \vartriangleleft^{prop} \bq_{i}$, $\bq_i\in Gen_1^{r_i}(\bp)$ and $Pr[M^{n_i}_{\bq_i}\models\psi_k]\geq1-\frac{1}{i}$.
This completes the proof of Lemma \ref{LemSec1}.

\section{Decreasing coordinates}
In this section we prove Theorem \ref{ThmMain} for $j\in\{2,3\}$.
As before, the "if" direction is an immediate conclusion of Theorem \ref{ThmPrevL}. Moreover as $Gen_3(\bp)\subseteq Gen_2(\bp)$ it remains to prove that if $(*)$ of \ref{ThmMain} fails then the 0-1 strongly fails for some $\bq\in Gen_3(\bp)$. We divide the proof into two cases according to the behavior of $\sum_{l=1}^np_i$, which is an approximation of the expected number of neighbors of a given node in $M^n_{\bp}$. Define:
$$(**)\qquad\qquad\lim_{n\to\infty}\log(\sum_{i=1}^n p_i)/\log n=0.$$
Assume that $(**)$ above fails. Then for some $\epsilon>0$, the set $\{n\in\bN:\sum_{i=1}^n p_i\geq n^{\epsilon}\}$ is unbounded, hence we finish by Lemma \ref{LemA}. On the other hand if $(**)$ holds then $\sum_{i=1}^n p_i$ increases slower then any positive power of $n$, formally for all $\delta>0$ for some $n_{\delta}\in\bN$ we have $n>n_{\delta}$ implies $\sum_{i=1}^n p_i\leq n^{\delta}$. As we assume that $(*)$ of Theorem \ref{ThmMain} fails we have for some $\epsilon>0$ the set $\{n\in\bN:\prod_{i=1}^n (1-p_i)\leq n^{-\epsilon}\}$ is unbounded. Together (with $-\epsilon/6$ as $\delta$) we have that the assumptions of Lemma \ref{LemB} hold, hence we finish the proof.

\begin{lem} \label{LemA}
Let $\bp\in\gP^{inf}$ be such that $p_l<1$ for $l>0$. Assume that for some $\epsilon >0$ we have
for an unbounded set of $n\in\bN$: $\sum_{l\leq n}p_l\geq n^{\epsilon}$.
Then for some $\bq\in Gen_3(\bp)$ and $\psi = \psi_{isolated}:=\exists x \forall y \lnot x\thicksim y$,
both $\psi$ and $\lnot\psi$ holds infinitely often in $M^n_{\bq}$.
\end{lem}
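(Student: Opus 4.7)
My plan is to mirror the inductive back-and-forth construction from Section 2. I will build $\bq = \cup_{i\in\bN}\bq_i$ as the increasing union of finite approximations $\bq_i \vartriangleleft \bq_{i+1}$, each a truncation of a member of $Gen_3(\bp)$, arranged so that $Pr[M^{n_i}_{\bq_i}\models \psi_{isolated}] \geq 1 - 1/i$ at odd stages $i$ and $Pr[M^{n_i}_{\bq_i}\models \lnot \psi_{isolated}] \geq 1 - 1/i$ at even stages. Passing to the limit via the analogue of Observation \ref{Obs} then yields $\bq\in Gen_3(\bp)$ in which $\psi_{isolated}$ and its negation both hold infinitely often. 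Throughout, set $c_i := \prod_{0 < l < n_{i}}(1-(\bq_{i})_l)$, which is strictly positive since every $p_l < 1$.

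At an odd stage, I extend $\bq_{i-1}$ by appending zeros out to some large $n_i$ (this is legal for $Gen_3$ since $0 \in \{0,p_l\}$). Then every edge in $M^{n_i}_{\bq_i}$ has length $< n_{i-1}$, so the spaced witnesses $v_k := 1 + 2k n_{i-1}$, for $0 \leq k < K := \lfloor n_i /(2n_{i-1})\rfloor$, have pairwise disjoint potential-neighborhood sets, making the events ``$v_k$ is isolated'' mutually independent. Each such event has probability at least $c_{i-1}^2 > 0$ (two-sided multiplicity count for $l < n_{i-1}$), so $Pr[\text{no } v_k \text{ isolated}] \leq (1 - c_{i-1}^2)^K \to 0$ as $n_i \to \infty$; choosing $n_i$ large enough does the job.

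At an even stage, I use the hypothesis to pick $n_i$ arbitrarily large with $\sum_{l \leq n_i} p_l \geq n_i^\epsilon$, and set $q_l := p_l$ for $n_{i-1} \leq l < n_i$. A direct multiplicity count for distances shows that for every $v \in [n_i]$,
\[
Pr[v \text{ isolated in } M^{n_i}_{\bq_i}] \;\leq\; \prod_{l=1}^{n_i-1}(1-(\bq_i)_l) \;=\; c_{i-1}\prod_{l=n_{i-1}}^{n_i-1}(1-p_l) \;\leq\; c_{i-1}\,e^{-n_i^\epsilon/2},
\]
using $1-x \leq e^{-x}$ and absorbing the bounded contribution $\sum_{l<n_{i-1}}p_l$ into the factor $1/2$ once $n_i$ is sufficiently large. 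A union bound over $v \in [n_i]$ then yields $Pr[M^{n_i}_{\bq_i} \models \psi_{isolated}] \leq n_i c_{i-1}\, e^{-n_i^\epsilon/2} \to 0$.

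The step I expect to require the most care is tracking how $c_i$ evolves: after an even stage $c_i$ can drop by a factor as small as $e^{-n_{i-1}^\epsilon/2}$, so the subsequent odd stage demands $K$ of order $c_i^{-2}$, hence $n_{i+1}$ correspondingly huge. This is a bookkeeping rather than a conceptual obstacle, since at each stage $n_i$ is chosen last and may be taken arbitrarily large, so both tails $(1-c_i^2)^K$ and $n\,e^{-n^\epsilon/2}$ can always be driven below $1/i$ regardless of the accumulated history. The construction therefore carries through as in the proof of Lemma \ref{LemSec1}, producing the required $\bq \in Gen_3(\bp)$.
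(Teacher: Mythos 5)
Your overall skeleton (an increasing chain of finite approximations, zero-padding stages to create isolated vertices, dense stages to kill them, then passing to the union) is exactly the paper's, and your zero-padding stage matches the paper's argument and is fine. The genuine gap is in the dense stage: the displayed inequality $Pr[v\text{ isolated in }M^{n_i}_{\bq_i}]\leq\prod_{l=1}^{n_i-1}(1-(\bq_i)_l)$ is false for interior vertices. A vertex $v$ realizes only the distances $1,\dots,v-1$ to its left and $1,\dots,n_i-v$ to its right, so $Pr[v\text{ isolated}]=\prod_{l<v}(1-(\bq_i)_l)\cdot\prod_{l\leq n_i-v}(1-(\bq_i)_l)$; distances above $\max(v-1,n_i-v)$ contribute nothing, and the only bound valid for every $v$ is $\prod_{l\leq\lfloor (n_i-1)/2\rfloor}(1-(\bq_i)_l)$. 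This is not cosmetic, because your choice of $n_i$ directly in the unbounded set (so that $\sum_{l\leq n_i}p_l\geq n_i^{\epsilon}$) allows all of the guaranteed mass to sit at distances in $(n_i/2,n_i]$, which middle vertices never see. For instance, let $p_l=1/2$ for $N_k<l\leq 2N_k$ and $p_l=0$ otherwise, with $N_{k}$ growing extremely fast; the hypothesis holds at $n=2N_k$, but taking $n_i=2N_k$ each vertex in the middle third of $[n_i]$ is isolated with probability at least about $2^{-N_{k-1}}$, so the expected number of isolated vertices is about $n_i\cdot 2^{-N_{k-1}}\to\infty$ and in fact $\psi_{isolated}$ holds with high probability at such $n_i$. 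So with your choice of $n_i$ the even-stage goal $Pr[\lnot\psi]\geq 1-1/i$ can be unattainable; the error is in the strategy of where the hypothesis is applied, not just in a constant.

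The paper's proof avoids this by applying the hypothesis at $\lfloor n_i/4\rfloor$ rather than at $n_i$: it fixes $n'$ with $n_i/4\leq n'<n_i/2$, notes that every vertex sees all distances $\leq n'$ on at least one side, so $Pr[E_x]\leq\prod_{l\leq n'}(1-(q_i)_l)\leq 2e^{-a}$ where $a=\sum_{l\leq n'}(q_i)_l$ (via the averaging inequality $(1-x)(1-y)\leq(1-\tfrac{x+y}{2})^2$), and then chooses $n_i$ so that $\lfloor n_i/4\rfloor$ lies in the unbounded set, which gives $a\geq(n_i/5)^{\epsilon}$ after subtracting the fixed, $n_i$-independent contribution of the already-frozen coordinates $l<n_{i-1}$. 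Your argument is repaired by the same two changes: bound the isolation probability using only distances $\leq n'<n_i/2$, and take $n_i\approx 4m$ with $m$ in the unbounded set, so that $\sum_{l\leq n'}(\bq_i)_l\geq m^{\epsilon}-O(1)$; the rest of your union-bound and bookkeeping then goes through.
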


\begin{proof}
We construct a series, $(\bq_1,\bq_2,...)$ such that for $i>0$: $\bq_i\in\gP^{fin}$, $\bq_i\vartriangleleft\bq_{i+1}$ and $\cup_{i>0}\bq_i\in Gen_3(\bp)$. For $i\geq1$ denote $n_i:=n_{\bq_i}$. We will show that:
\begin{itemize}
    \item[$*_{even}$] For even $i>1$: $Pr[M^{n_i}_{\bq_i}\models\psi]\geq1-\frac{1}{i}$.
    \item[$*_{odd}$] For odd $i>1$: $Pr[M^{n_i}_{\bq_i}\models\lnot\psi]\geq1-\frac{1}{i}$.
\end{itemize}
Taking $\bq=\cup_{i>0}\bq_i$ will then complete the proof. We construct $\bq_i$ by induction on $i>0$:

\textbf{Case 1} $i=1$: Let $n_1=2$ and $(q_1)_1=p_1$.

\textbf{Case 2} even $i>1$: As $(\bq_{i-1},n_{i-1})$ are given, let us define $\bq_i$ were $n_i>n_{i-1}$ is to be determined later:
$(q_i)_l=(q_{i-1})_l$ for $l< n_{i-1}$ and $(q_i)_l=0$ for $n_{i-1}\leq l< n_i$. For $x\in [n_i]$ let $E_x$ be the event: "$x$ is an isolated point".
Denote $p':=(\prod_{0<l< n_{i-1}}(1-(q_{i-1})_l)^2$ and note that $p'>0$ and does not depend on $n_i$. Now for $x\in [n_i]$, $Pr[M^{n_i}_{\bq_i}\models E_x]\geq p'$, furthermore if $x,x'\in [n_i]$ and $|x-x'|>n_{i-1}$ then $E_x$ and $E_{x'}$ are independent in $M^{n_i}_{\bq_i}$. We conclude that $Pr[M^{n_i}_{\bq_i}\models\lnot\psi]\leq(1-p)^{\lfloor n_i/(n_{i-1}+1)\rfloor}$ which approaches $0$ as $n_i\to\infty$. So by choosing $n_i$ large enough we have $*_{even}$.

\textbf{Case 3} odd $i>1$: As in case 2 let us define $\bq_i$ were $n_i>n_{i-1}$ is to be determined later:
$(q_i)_l=(q_{i-1})_l$ for $l< n_{i-1}$ and $(q_i)_l=p_l$ for $n_{i-1}\leq l< n_i$. Let $n'=\max\{n<n_i/2:n=2^m\text{ for some }m\in\bN\}$, so $n_i/4\leq n' <n_i/2$. Denote $a=\sum_{0<l\leq n'} (q_i)_l$ and $a'=\sum_{0<l\leq \lfloor n/4 \rfloor} (q_i)_l$. Again let $E_x$ be the event: "$x$ is isolated". Now as $n'<n_i/2$, $Pr[M^{n_i}_{\bq_i}\models E_x]\leq\prod_{0<l\leq n'}(1-(q_i)_l)$. By a repeated use of: $(1-x)(1-y)\leq(1-\frac{x+y}{2})^2$ we get $Pr[M^{n_i}_{\bq_i}\models E_x]\leq(1-\frac{a}{n'})^{n'}$ which for $n'$ large enough is smaller then $2\cdot e^{-a}$, and as $a'\leq a$, we get $Pr[M^{n_i}_{\bq_i}\models E_x]\leq2\cdot e^{-a'}$. By the definition of $a'$ and $\bq_i$ we have $a'=\sum_{l=1}^{\lfloor n_1/4 \rfloor}p_l-\sum_{l<n_{i-1}}(p_l-(q_{i-1})_l)$. By our assumption for an unbounded set of $n_i\in\bN$ we have $a'\geq (\lfloor n_i/4 \rfloor)^{\epsilon}-\sum_{l<n_{i-1}}(p_l-(q_{i-1})_l)$. But as the sum on the right is independent of $n_i$ we have (again for $n_i$ large enough): $a'\geq(n_i/5)^{\epsilon}$. Consider the expected number of isolated points in the probability space $M^{n_i}_{\bq_i}$, denote this number by $X(n_i)$. By all the above we have:
$$X(n_i)\leq n_i\cdot 2\cdot e^{-a}\leq n_i\cdot 2\cdot e^{-a'}\leq 2n_i\cdot e^{-(n_i/5)^{\epsilon}}.$$
The last expression approaches $0$ as $n_i\to\infty$. So by choosing $n_i$ large enough (while keeping $a'\geq(n_i/5)^{\epsilon}$ we have $*_{odd}$.

Finally notice that indeed $\cup_{i>0}\bq_i\in Gen_3(\bp)$, as the only change we made in the inductive process is decreasing $p_l$ to $0$ for $n_{i-1}<l\leq n_i$ and $i$ is even.
\end{proof}

\begin{lem} \label{LemB}
Let $\bp\in\gP^{inf}$ be such that $p_l<1$ for $l>0$. Assume that for some $\epsilon >0$ we have
for an unbounded set of $n\in\bN$:
\begin{itemize}
    \item[$(\alpha)$] $\sum_{l\leq n}p_l\leq n^{\epsilon/6}$.
    \item[$(\beta)$] $\prod_{l\leq n}(1-p_l)\leq n^{-\epsilon}$.
\end{itemize}
Let $k=\lceil\frac{6}{\epsilon}\rceil+1$ and $\psi=\psi_k$ be the sentence "saying" there exists a connected component which is a path of length $k$, formally:
$$\psi_k:=\exists x_1...\exists x_k\bigwedge_{1\leq i\neq j \leq k} x_i\neq x_j\wedge \bigwedge_{1\leq i<k} x_i \thicksim x_{i+1}\wedge\forall y(\bigwedge_{1\leq i \leq k} x_i\neq y)\to(\bigwedge_{1\leq i \leq k}\lnot x_i\thicksim y).$$
Then for some $\bq\in Gen_3(\bp)$,
both $\psi$ and $\lnot\psi$ holds infinitely often in $M^n_{\bq}$.
\end{lem}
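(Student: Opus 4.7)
The plan mirrors the inductive construction in Lemma~\ref{LemA}: I build a chain $\bq_1 \vartriangleleft \bq_2 \vartriangleleft \cdots$ in $\gP^{fin}$ with $(q_i)_d \in \{0, p_d\}$ at every step, arranged so that $\Pr[M^{n_i}_{\bq_i} \models \psi_k] \geq 1-1/i$ for odd $i$ and $\Pr[M^{n_i}_{\bq_i} \models \lnot\psi_k] \geq 1-1/i$ for even $i$ (with $n_i := n_{\bq_i}$). Then $\bq := \bigcup_i \bq_i \in Gen_3(\bp)$ satisfies both $\psi_k$ and $\lnot\psi_k$ infinitely often in $M^n_{\bq}$, as required. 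Let $l^*$ be the least positive integer with $p_{l^*} > 0$ (which exists by $(\beta)$); for the base case I set $(q_1)_{l^*} = p_{l^*}$ and $n_1 = l^* + 1$.

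For an odd step $i \geq 3$ I extend $\bq_{i-1}$ by zeros up to length $n_i$ and mimic Claim~\ref{ClaimPos}: for each eligible $m$ the arithmetic-progression candidate $s(m) = (m, m+l^*, \ldots, m+(k-1)l^*)$ witnesses $\psi_k$ with probability at least the positive constant $p^* := (p_{l^*})^{k-1}\bigl(\prod_{0<d<n_{i-1}}(1-(q_{i-1})_d)\bigr)^{2k}$ (depending only on $\bq_{i-1}$ and $k$), and candidates spaced more than $2(n_{i-1}+kl^*)$ apart involve disjoint edge sets and are therefore independent. Taking $n_i$ large enough forces at least one candidate to succeed with probability $\geq 1-1/i$.

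For an even step $i \geq 2$ I choose $n_i$ from the unbounded set where both $(\alpha)$ and $(\beta)$ hold, with $n_i \gg n_{i-1}$, and extend via $(q_i)_d = p_d$ for $n_{i-1} \leq d < n_i$. I then bound by Markov the expected number $E[N]$ of ordered $k$-tuples $(x_1,\ldots,x_k) \in [n_i]^k$ of distinct vertices witnessing $\psi_k$. The crucial observation is that for a fixed $X := \{x_1,\ldots,x_k\}$ the edges $\{y,x_j\}$ with $y \in [n_i]\setminus X$ and $j \in [k]$ are pairwise distinct, giving the exact factorisation
\[
\Pr[\text{no external edge from } X] \;=\; \prod_{j=1}^{k}\prod_{y\in[n_i]\setminus X}(1-(q_i)_{|y-x_j|}).
\]
Each inner factor is bounded by $C\cdot n_i^{-\epsilon}$, combining the estimate $\prod_{d=1}^{n_i-1}(1-(q_i)_d)\leq C\cdot n_i^{-\epsilon}$ (from $(\beta)$ together with $(q_i)_d=p_d$ for $d\geq n_{i-1}$) with a bounded correction accounting for the at-most-$(k-1)$ factors absorbed when passing from $y\in[n_i]\setminus\{x_j\}$ to $y\in[n_i]\setminus X$. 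By $(\alpha)$ the path-count factor obeys $\sum_{(x_2,\ldots,x_k)}\prod_{j<k}(q_i)_{|x_j-x_{j+1}|} \leq \bigl(2\sum_d(q_i)_d\bigr)^{k-1} \leq 2^{k-1}n_i^{\epsilon(k-1)/6}$, so summing over $x_1\in[n_i]$ gives
\[
E[N] \;\leq\; C'\cdot n_i^{\,1+\epsilon(k-1)/6-k\epsilon} \;=\; C'\cdot n_i^{\,1-\epsilon/6-5k\epsilon/6}.
\]
The choice $k=\lceil 6/\epsilon\rceil+1$ makes this exponent $\leq -4$, so $E[N]\to 0$ and Markov yields $\Pr[M^{n_i}_{\bq_i}\models\psi_k]\leq 1/i$ for $n_i$ large.

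The main technical obstacle is securing the uniform bound $\prod_{y\in[n_i]\setminus X}(1-(q_i)_{|y-x_j|})\leq Cn_i^{-\epsilon}$ regardless of where $x_j$ sits in $[n_i]$: a middle vertex sees within-$[n_i]$ distances only up to roughly $n_i/2$, so $(\beta)$ at $n_i$ does not apply to it directly. I handle this via a pigeonhole split on which half of $[1,n_i-1]$ carries the bulk of $-\sum\log(1-p_d)$: in the lower-half case every vertex's doubled close-neighbourhood already captures mass $\geq(\epsilon/2)\log n_i$, while in the upper-half case one uses $x_{\min}$ and $x_{\max}$ of the tuple (which behave like boundary vertices and see the full distance range) to provide the needed factor, losing at most two powers of $n_i^{-\epsilon}$, which is harmless since the counting budget has $k$ such powers. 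The corrective product $\prod_{y\in X\setminus\{x_j\}}(1-(q_i)_{|y-x_j|})^{-1}$ is finite because each $p_d<1$; any finitely many indices with $p_d$ dangerously close to $1$ can moreover be pre-emptively zeroed out in $\bq_i$ — still permitted within $Gen_3$ — without affecting $(\alpha)$ or $(\beta)$ asymptotically.
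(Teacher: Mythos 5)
Your skeleton (alternating extensions, zeros for the $\psi$-direction, first moment for the $\lnot\psi$-direction) is the paper's, and your odd step is essentially the paper's Case 2; replacing the paper's max-degree conditioning by the direct path-count estimate $\sum\prod_j(q_i)_{|x_j-x_{j+1}|}\leq(2\sum_d (q_i)_d)^{k-1}\leq 2^{k-1}n_i^{\epsilon(k-1)/6}$ is fine. The genuine gap is in the even step, precisely at the point you flag as the "main technical obstacle". Your upper-half branch of the pigeonhole is wrong: $x_{\min}$ and $x_{\max}$ are extremes of the \emph{tuple}, not of $[n_i]$, so they need not see the long distances at all (a tuple clustered near $n_i/2$ sees nothing beyond roughly $3n_i/4$). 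And even if they did, two factors of $n_i^{-\epsilon}$ against a count of order $n_i^{1+(k-1)\epsilon/6}$ leave exponent about $2-2\epsilon>0$, so $E[N]$ does not tend to $0$. Worse, the failure is not repairable at an arbitrary $n_i$ from the common $(\alpha),(\beta)$ set: take $p_{l^*}=1/2$ and $p_d=1/2$ exactly for $d\in(N_j-\log N_j,\,N_j]$ along a very sparse sequence $N_j$, zero elsewhere. Then $(\alpha)$ and $(\beta)$ hold at every $n=N_j$, but in $M^{N_j}_{\bq_i}$ only the $O(\log N_j)$ vertices nearest the ends can see the heavy distances; any block of $k$ vertices in the middle joined by distance-$l^*$ edges (and $q_{l^*}>0$ is forced, since your odd steps need it) is a witness for $\psi_k$ with probability bounded below by a constant depending only on $\bq_{i-1}$ and $k$, and far-apart blocks are independent, so $Pr[M^{N_j}_{\bq_i}\models\psi_k]\to 1$. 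Hence no first-moment (or any) argument can deliver $Pr[\lnot\psi_k]\geq 1-1/i$ at such $n_i$; the missing idea is that the choice of $n_i$ must be coordinated with where the $(\beta)$-mass sits.

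This is exactly what the paper's Case 3 does: since every vertex $x\in[n]$ sees \emph{all} distances $1,\ldots,\lfloor n/2\rfloor$ on at least one side, one bounds the per-vertex external non-adjacency product by $\prod_{l\leq\lfloor n_i/2\rfloor}(1-(q_i)_l)$ and then chooses $n_i$ so that $\lfloor n_i/2\rfloor$ lies in the unbounded set where $(\beta)$ holds (e.g.\ $n_i$ about twice such a point); since $(q_i)_l=p_l$ for $l\geq n_{i-1}$, this product is at most $\big(\prod_{l<n_{i-1}}(1-p_l)\big)^{-1}(\lfloor n_i/2\rfloor)^{-\epsilon}\leq n_i^{-\epsilon/2}$ for large $n_i$, uniformly in the position of the vertex, and then $E[N]\leq C\,n_i^{1+(k-1)\epsilon/6-k\epsilon/2}\to 0$ with $k=\lceil 6/\epsilon\rceil+1$ (your lower-half computation, now valid unconditionally). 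You should adopt this choice of $n_i$ and drop the pigeonhole. A secondary loose end: your correction factor $\prod_{y\in X\setminus\{x_j\}}(1-(q_i)_{|x_j-y|})^{-1}$ is only controlled if the relevant $q$'s stay away from $1$, and "zeroing out finitely many dangerous indices" is not guaranteed to be available (there may be infinitely many $d$ with $p_d$ near $1$, and zeroing them could destroy $(\beta)$); this needs a more careful accounting of the at most $k-1$ excluded distances per vertex, though it is a much smaller issue than the one above.
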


\begin{proof}
The proof follows the same line as the proof of \ref{LemA}. We construct an increasing series, $(\bq_1,\bq_2,...)$, and demand $*_{even}$ and $*_{odd}$ as in \ref{LemA}. Taking $\bq=\cup_{i>0}\bq_i$ will then complete the proof. We construct $\bq_i$ by induction on $i>0$:

\textbf{Case 1} $i=1$: Let $l(*):=\min\{l>0:p_l>0\}$ and define $n_1=l(*)+1$ and $(q_1)_l=p_l$ for $l<n_1$.

\textbf{Case 2} even $i>1$: As before, for $n_i>n_{i-1}$ define:
$(q_i)_l=(q_{i-1})_l$ for $l< n_{i-1}$ and $(q_i)_l=0$ for $n_{i-1}\leq l< n_i$. For $1\leq x< n_i-k\cdot l(*)$ let $E^x$ be the event: "$(x,x+l(*),...,x+l(*)(k-1))$ exemplifies $\psi$." Formally $E^x$ holds in $M^{n_i}_{\bq_i}$ iff $\{(x,x+l(*),...,x+l(*)(k-1))\}$ is isolated and for $0\leq j<k-1$, $\{x+jl(*),x+(j+1)l(*)\}$ is an edge of $M^{n_i}_{\bq_i}$. The remainder of this case is similar to case 2 of Lemma \ref{LemA} so we will not go into details.
Note that $Pr[M^{n_i}_{\bq_i}\models E^x]>0$ and does not depend on $n_i$, and if $|x-x'|$ is large enough (again not depending on $n_i$) then $E^x$ and $E^{x'}$ are independent in $M^{n_i}_{\bq_i}$. We conclude that by choosing $n_i$ large enough we have $*_{even}$.

\textbf{Case 3} odd $i>1$: In this case we make use of the fact that almost always, no $x\in[n]$ have to many neighbors. Formally:

\begin{claim}
Let $\bq\in\gP^{inf}$ be such that $q_l<1$ for $l>0$. Let $\delta>0$ and assume that for an unbounded set of $n\in\bN$ we have, $\sum_{l=1}^nq_l\leq n^{\delta}$. Let $E^n_{\delta}$ be the event: "No $x\in[n]$ have more than $8n^{2\delta}$ neighbors". Then we have:
$$\limsup_{n\to\infty}Pr[E^n_{\delta}\text{ holds in }M^n_{\bq}]=1.$$
\end{claim}

\begin{proof}
First note that the size of the set $\{l>0:q_l>n^{-\delta}\}$ is at most $n^{2\delta}$. Hence by ignoring at most $2n^{2\delta}$ neighbors of each $x\in[n]$, and changing the number of neighbors in the definition of $E^n_{\delta}$ to $6n^{2\delta}$ we may assume that for all $l>0$, $q_l\leq n^{-\delta}$. The idea is that the number of neighbors of each $x\in[n]$ can be approximated (or in our case only bounded from above) by a Poisson random variable with parameter close to $\sum_{i=l}^nq_l$. Formally, for each $l>0$ let $B_l$ be a Bernoulli random variable with $Pr[B_l=1]=q_l$. For $n\in\bN$ let $X^n$ be the random variable defined by $X^n:=\sum_{l=1}^nB_l$.  For $l>0$ let $Po_l$ be a Poisson random variable with parameter $\lambda_l:=-\log(1-q_l)$ that is for $i=0,1,2,...$ $Pr[Po_l=i]=e^{-\lambda_l}\frac{(\lambda_l)^i}{i!}$. Note that $Pr[B_l=0]=Pr[Po_l=0]$. Now define $Po^n:=\sum_{i=1}^nPo_l$. By the last sentence we have $Po^n\geq_{st}X^n$ ($Po^n$ is stochastically larger than $X^n$) that is, for $i=0,1,2,...$ $Pr[Po^n\geq i]\geq Pr[X^n\geq i]$. Now $Po^n$ (as the sum of Poisson random variables) is a Poisson random variable with parameter $\lambda^n:=\sum_{l=1}^n\lambda_l$. Let $n\in\bN$ be such that $\sum_{l=1}^nq_l\leq n^{\delta}$, and define $n'=n'(n):=min\{n'\geq n:n'=2^m\text{ for some } m\in\bN\}$, so $n\leq n'<2n$. For $0<l\leq n'$ let $q'_l$ be $q_l$ if $l\leq n$ and $0$ otherwise, so we have: $\prod_{l=1}^n 1-q_l=\prod_{l=1}^{n'} 1-q'_l$ and $\sum_{l=1}^{n}q_l=\sum_{l=1}^{n'}q'_l$. Note that if $0\leq p,q\leq 1/4$ then $(1-p)(1-q)\geq (1-\frac{p+q}{2})^2\cdot\frac{1}{2}$. By a repeated use of the last inequality we get that $\prod_{i=l}^{n'}(1-q'_l)\geq(1-\frac{\sum_{i=l}^{n'}q'_l}{n'})^{n'}\cdot\frac{1}{n'}$. We can now evaluate $\lambda^n$:
\begin{eqnarray*}
\lambda^n&=&\sum_{l=1}^{n}\lambda_l=\sum_{l=1}^{n}-\log(1-q_l)=-\log(\prod_{l=1}^{n}(1-q_l))=-\log(\prod_{l=1}^{n'}(1-q'_l))\\
&\leq&-\log[(1-\frac{\sum_{l=1}^{n'}q'_l}{n'})^{n'}\cdot\frac{1}{n'}]=-\log[(1-\frac{\sum_{l=1}^{n}q_l}{n'})^{n'}\cdot\frac{1}{n'}]\\
&\approx&-\log[e^{-\sum_{l=1}^{n}q_l}\cdot\frac{1}{n'}]\leq-\log[e^{-n^{\delta}}\cdot\frac{1}{2n}]\leq-\log[e^{-n^{2\delta}}]=n^{2\delta}.
\end{eqnarray*}
Hence by choosing $n\in\bN$ large enough while keeping $\sum_{l=1}^nq_l\leq n^{\delta}$ (which is possible by our assumption) we have $\lambda^n\leq n^{2\delta}$. We now use the Chernoff bound for Poisson random variable: If $Po$ is a Poisson random variable with parameter $\lambda$ and $i>0$ we have $Pr[Po\geq i]\leq e^{\lambda(i/\lambda-1)}\cdot(\frac{\lambda}{i})^i$. Applying this bound to $Po^n$ (for $n$ as above) we get:
$$Pr[Po^n\geq3n^{2\delta}]\leq e^{\lambda^n(3n^{2\delta}/\lambda^n-1)}\cdot(\frac{\lambda^n}{3n^{2\delta}})^{3n^{2\delta}}\leq e^{3n^{2\delta}}\cdot(\frac{\lambda^n}{3n^{2\delta}})^{3n^{2\delta}}\leq(\frac{e}{3})^{3n^{2\delta}}.$$
Now for $x\in[n]$ let $X^n_x$ be the number of neighbors of $x$ in $M^n_{\bq}$ (so $X^n_x$ is a random variable on the probability space $M^n_{\bq}$). By the definition of $M^n_{\bq}$ we have $X^n_x\leq_{st}2\cdot X^n\leq_{st}2\cdot Po^n$. So for unbounded $n\in\bN$ we have for all $x\in[n]$, $Pr[X^n_x\geq 6n^{2\delta}]\leq (\frac{e}{3})^{3n^{2\delta}}$. Hence by the Markov inequality for unbounded $n\in\bN$ we have, $$Pr[E^n\text{ does not hold in }M^n_{\bq}]=Pr[\text{for some } x\in[n], X^n_x\geq 3n^{2\delta}]\leq n\cdot(\frac{e}{3})^{6n^{2\delta}}.$$ But the last expression approaches $0$ as $n$ approaches $\infty$, Hence we are done proving the claim.
\end{proof}

We return to \textbf{Case 3} of the proof of \ref{LemB}, and it remains to construct $\bq_i$. As before for $n_i>n_{i-1}$ define:
$(q_i)_l=(q_{i-1})_l$ for $l< n_{i-1}$ and $(q_i)_l=p_l$ for $n_{i-1}\leq l< n_i$. By the claim above and $(\alpha)$ is our assumptions, for $n_i$ large enough we have
$Pr[E^{n_i}_{\epsilon/6}\text{ holds in }M^{n_i}_{\bq_i}]\geq 1/2i$, so assume in the rest of the proof that $n_i$ is indeed large enough, and assume that $E^{n_i}_{\epsilon/6}$ holds in $M^{n_i}_{\bq_i}$, and all the probabilities on the space $M^{n_i}_{\bq_i}$ will be conditioned to $E^{n_i}_{\epsilon/6}$ (even if not explicitly said so). A $k$-tuple $\bar{x}=(x_1,...,x_k)$ of members of $[n_i]$ is called a $k$-path (in $M^{n_i}_{\bq_i}$) if it is without repetitions and for $0<j<k$ we have $M^{n_i}_{\bq_i}\models x_j\thicksim x_{j+1}$. A $k$-path is isolated if in addition no member of $\{x_1,...,x_k\}$ is connected to a member of $[n_i]\setminus\{x_1,...,x_k\}$. Now (recall we assume $E^{n_i}_{\epsilon/6}$) with probability $1$: the number of $k$-paths in $M^{n_i}_{\bq_i}$ is at most $8^k\cdot n^{1+k\epsilon/3}$. For each $(x_1,...,x_k)$ without repetitions we have:
$$ Pr[(x_1,...,x_k)\text{ is isolated in }M^{n_i}_{\bq_i}]=\prod_{j=1}^k\prod_{y\neq x_j}(1-(q_i)_{|x_j-y|})\leq(\prod_{l=1}^{\lfloor n_i/2\rfloor}(1-(q_i)_l))^k.$$ By assumption $(\beta)$ we have for unbounded set of $n_i\in\bN$:
$$\prod_{l=1}^{\lfloor n_i/2\rfloor}(1-(q_i)_l)\leq\prod_{l=n_i-1}^{\lfloor n_i/2\rfloor}(1-p_l)\leq\prod_{l<n_i}(1-q_l)\cdot(\lfloor n_i/2\rfloor)^{-\epsilon}\leq (n_i)^{-\epsilon/2}.$$ Together letting $Y(n_i)$ be the expected number of isolated $k$ tuples in $M^{n_i}_{\bq_i}$ we have:
$$Y(n_i)\leq 8^k\cdot (n_i)^{1+k\epsilon/3}\cdot (n_i)^{-k\epsilon/2}=8^k\cdot (n_i)^{1-k\epsilon/6}\rightarrow_{n_i\to\infty}0.$$
So by choosing $n_i$ large enough and using Markov's inequality, we have $*_{odd}$, and we are done.
\end{proof}

\section{Allowing some probabilities to equal $1$}
In this section we analyze the hereditary 0-1 law for $\bp$ where some of the $p_i$-s may equal $1$. For $\bp\in\gP^{inf}$ let $U^*(\bp):=\{l>0:p_l=1\}$. The situation $U^*(\bp)\neq\emptyset$ was discussed briefly in the end of section 4 of \cite{LuSh}, an example was given there of some $\bp$ consisting of only ones and zeros with $|U^*(\bp)|=\infty$ such that the 0-1 law fails for $M^n_{\bp}$. We follow the lines of that example and prove that if $|U^*(\bp)|=\infty$ and $j\in\{1,2,3\}$, then the $j$-hereditary 0-1 law for $L$ fails for $\bp$. This is done in \ref{ThmInf}.
The case $0<|U^*(\bp)|<\infty$ is also studied and a full characterization of the $j$-hereditary 0-1 law for $L$ is given in \ref{ConcFin} for $j\in\{2,3\}$, and for $j=1$, $1<|U^*(\bp)|$. The case $j=1$ and $1=|U^*(\bp)|$ is discussed in section 5.

\begin{thm} \label{ThmInf}
Let $\bp\in\gP^{inf}$ be such that $U^*(\bp)$ is infinite, and $j$ be in $\{1,2,3\}$. Then
$M^n_{\bp}$ does not satisfy the $j$-hereditary weak convergence law for $L$.
\end{thm}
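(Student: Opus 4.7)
The goal is, for each $j \in \{1,2,3\}$, to exhibit $\bq \in Gen_j(\bp)$ and an $L$-sentence $\phi$ with both $\phi$ and $\lnot\phi$ holding in $M^n_{\bq}$ infinitely often, which forces $\limsup Pr[M^n_{\bq}\models\phi] - \liminf Pr[M^n_{\bq}\models\phi] = 1$ and hence violates even weak convergence. Since $Gen_3(\bp) \subseteq Gen_2(\bp)$, any witness constructed for $j=3$ automatically serves for $j=2$, so only $j=3$ and $j=1$ need to be handled.

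For $j=3$, I would pick a very rapidly growing infinite subset $A = \{a_1 < a_2 < \cdots\} \subseteq U^*(\bp)$ (say, with $a_{k+1} \geq 2^{a_k}$), and set $q_{a_k} = 1$ for every $k$ with $q_l = 0$ otherwise. Then $q_l \in \{0,p_l\}$ for all $l$, so $\bq \in Gen_3(\bp)$, and $M^n_{\bq}$ is the \emph{deterministic} graph with edge set $\{\{i,j\} \subseteq [n] : |i-j| \in A\}$. The plan now is to reuse the construction at the end of Section~4 of \cite{LuSh}: by tuning $A$, one produces a fixed $L$-sentence $\phi$ (for instance, one asserting the existence of a specific local configuration near the ``endpoint'' vertices $1$ and $n$) such that $M^n_{\bq}\models\phi$ for an unbounded set of $n$ and $M^n_{\bq}\models\lnot\phi$ for another unbounded set. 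The rapid growth of $A$ ensures that as $n$ crosses each $a_k$ the graph acquires a new ``long-edge'' pattern detectable by a bounded-depth formula, while for $n$ well inside each gap the graph looks homogeneous. Since $M^n_{\bq}$ is deterministic, $Pr[M^n_{\bq}\models\phi] \in \{0,1\}$ and oscillation is immediate.

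For $j=1$, build $\bq = \bigcup_i \bq_i \in Gen_1(\bp)$ inductively, in the style of the proof of Lemma~\ref{LemSec1}, specifying the increasing insertion function $f:\bN^*\to\bN^*$ in stages. At each stage arrange that (i)~the $1$-positions $\{f(l) : l \in U^*(\bp)\}$ placed so far form an initial segment of the fast-growing sequence $A$ used in the $j=3$ case, and (ii)~the non-$1$ nonzero positions $\{f(l) : 0 < p_l < 1\}$ are spread so far apart that, on the relevant interval $[n]$, the random edges they produce form a sparse perturbation of the deterministic skeleton. Alternating stages force $Pr[M^{n_i}_{\bq_i}\models\phi]\geq 1 - 1/i$ on odd $i$ (using the deterministic $1$-edges of $A$ to produce many disjoint witnesses of the configuration realizing $\phi$) and $Pr[M^{n_i}_{\bq_i}\models\lnot\phi]\geq 1 - 1/i$ on even $i$ (by a first-moment estimate showing that no witness of $\phi$ exists, neither among deterministic $1$-edges nor among the sparse random edges), exactly mirroring the two-sided inductive scheme of Section~2.

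The hard part is purely combinatorial: the \cite{LuSh}-style design of the rapidly growing $A$ and the oscillating sentence $\phi$; once that is in hand, $j=3$ follows immediately because the graph is deterministic. The additional difficulty for $j=1$ is verifying that the stray random edges coming from the $p_l\in(0,1)$ entries cannot spoil the oscillation; this is accomplished by choosing $\phi$ robust against sparse random perturbations, plus a union-bound / first-moment argument at each inductive stage showing that, with probability $1-o(1)$, the random edges affect neither the existence (odd stages) nor the non-existence (even stages) of the configuration witnessing $\phi$.
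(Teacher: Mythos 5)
Your reduction of $j=2$ to $j=3$ and your skeleton for $j=3$ (pass to a sparse subsequence $A\subseteq U^*(\bp)$, zero all other coordinates so that $M^n_{\bq}$ is deterministic, and exhibit one fixed sentence whose truth value oscillates in $n$) is exactly the route the paper takes, and it likewise takes its cue from the example at the end of Section 4 of \cite{LuSh}. But your write-up stops where the actual content begins: you never exhibit the sentence $\phi$, the spacing condition on $A$, or the verification that a \emph{single} bounded-depth sentence flips along two unbounded sets of $n$. The paper does this concretely: $\psi$ says that every edge lies in a cycle of length $4$, the retained $1$-lengths are chosen with $n_i>4n_{i-1}$ when a new length is added, and short padding stages are interleaved, so that just after a new very long $1$-length appears it cannot be completed to a $4$-cycle, whereas at the padded values of $n$ it can. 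Saying that a new long-edge pattern is ``detectable by a bounded-depth formula'' is precisely the claim that has to be designed and proved; as written this part of your argument is a plan, not a proof.

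The $j=1$ case has a more substantive gap. In $Gen_1(\bp)$ you may only insert zeros, so \emph{every} entry of $\bp$ — in particular all of the (possibly infinitely many) $p_l\in(0,1)$ — must appear in $\bq$; you cannot discard them as in $Gen_3$. Spacing them far apart does not make them a ``sparse perturbation'' at the evaluation scales: if $\sum\{p_l:0<p_l<1\}=\infty$, then at stage $i$ each vertex of $M^{n_i}_{\bq_i}$ carries an ever larger collection of random incident edge-lengths, so a sentence asserting an exact local configuration (the kind your $j=3$ construction would produce) will typically have probability tending to $0$ rather than $1$ at your ``positive'' stages, and a generic first-moment/union-bound argument cannot repair this. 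The paper circumvents the problem with a mechanism your outline lacks: it inserts zeros so that the set of nonzero positions is sum-free except for the initial pattern $l^*,2l^*$ with $l^*$ odd (conditions (1)--(2) in its proof of Theorem \ref{ThmInf}). This is a condition on positions only, independent of the values $p_l$, and it guarantees that with probability $1$ the only triangles are $\{l,l+l^*,l+2l^*\}$, so the formula $\psi_{ext}$ defines the $l^*$-boundary $[1,l^*]\cup(n-l^*,n]$ no matter how the random edges fall. The oscillating sentence $\psi'$ is then built from probability-one edges plus a parity obstruction: when $q_{n-l^*}=1$ the two boundary intervals can be matched and $\psi'$ holds with probability exactly $1$, while at the infinitely many $n$ whose tail is all zeros a matching would force $[1,l^*]$ to have even size, contradicting the oddness of $l^*$, so $\psi'$ holds with probability exactly $0$. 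Some device of this kind — a value-independent structural condition on the inserted positions together with a sentence whose truth value is insensitive to the random edges — is the missing idea in your $j=1$ argument.
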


\begin{proof}
We start with the case $j=1$. The idea here is similar to that of section 2. We show that some $\bq\in Gen_1(\bp)$ has a structure (similar to the "proper" structure defined in \ref{DefProper}) that allows us to identify the sections "close" to $1$ or $n$ in $M^n_{\bq}$. It is then easy to see that if $\bq$ has infinitely many ones and infinitely many "long" sections of consecutive zeros, then the sentence saying: "there exists an edge connecting vertexes close to the the edges", will exemplify the failure of the 0-1 law for $M^n_{\bq}$. This is formulated below. Consider the following demands on $\bq\in\gP^{inf}$:
\begin{enumerate}
    \item
    Let $l^*<l^{**}$ be the first two members of $U^*(\bq)$, then $l^*$ is odd and $l^{**}=2\cdot l^*$.
    \item
    If $l_1,l_2,l_3$ all belong to $\{l>0:q_l>0\}$ and $l_1+l_2=l_3$ then $l_1=l_2=l^*$.
    \item
    The set $\{n\in\bN:n-2l^*<l<n\Rightarrow q_{l}=0\}$ is infinite.
    \item
    The set $U^*(\bq)$ is infinite.
\end{enumerate}
We first claim that some $\bq\in Gen_1(\bp)$ satisfies the demands (1)-(4) above. This is straight forward. We inductively add enough zeros before each nonzero member of $\bp$ guaranteing that it is larger than the sum of any two (not necessarily different) nonzero members preceding it. We continue until we reach $l^*$, then by adding zeros either before $l^*$ or before $l^{**}$ we can guarantee that $l^*$ is odd and that $l^{**}=2\cdot l^*$, and hence (1) holds. We then continue the same process from $l^{**}$, adding at least $2l^*$ zero's at each step. This guaranties (2) and (3). (4) follows immediately form our assumption that $U^*(\bp)$ is infinite. Assume that $\bq$ satisfies (1)-(4) and $n\in\bN$. With probability $1$ we have: $$\{x,y,z\}\text{ is a triangle in }M^n_{\bq}\text{ iff }\{x,y,z\}=\{l,l+l^*,l+l^{**}\}\text{ for some }0< l\leq n.$$
To see this use (1) for the "if" direction and (2) for the "only if" direction. We conclude that letting $\psi_{ext}(x)$ be the $L$ sentence saying that $x$ belongs to exactly one triangle, for each $n\in \bN$ and $m\in[n]$ with probability $1$ we have:
$$M^n_{\bq}\models\psi_{ext}[m]\text{ iff }m\in[1,l^*]\cup(n-l^*,n].$$
We are now ready to prove the failure of the weak convergence law in $M^n_{\bq}$, but in the first stage let us only show the failure of the convergence law. This will be useful for other cases (see Remark \ref{RemNew} below). Define $$\psi:=(\exists x \exists y)\psi_{ext}(x)\wedge\psi_{ext}(y)\wedge x\thicksim y.$$
Recall that $l^*$ is the \emph{first} member of $U^*(\bp)$, hence for some $p>0$ (not depending on $n$) for any $x,y\in[1,l^*]$ we have $Pr[M^n_{\bq}\models\lnot x\thicksim y]\geq p$ and similarly for any  $x,y\in(n-l^*,n]$. We conclude that:
$$Pr[(\exists x\exists y) (x,y\in[1,l^*] \text{ or } x,y\in(n-l^*,n])\text{ and } x\thicksim y] \leq 1-p^{2 {{l^*}\choose{2}}}<1.$$
By all the above, for each $l$ such that $q_l=1$ we have $Pr[M^{l+1}_{\bq}\models\psi]=1$, as the pair $(1,l+1)$ exemplifies $\psi$ in $M^{l+1}_{\bq}$ with probability $1$. On the other hand if $n$ is such that $n-2l^*<l<n \Rightarrow q_{l}=0$ then $Pr[M^{n}_{\bq}\models\psi]\leq 1-p^{2 {{l^*}\choose{2}}}$. Hence by (3) and (4) above, $\psi$ exemplifies the failure of the convergence law for $M^n_{\bq}$ as required.

We return to the proof of the failure of the weak convergence law. Define:
\begin{eqnarray*}
\psi'&=&\exists x_0...\exists x_{2l^*-1}[\bigwedge_{0\leq i<i'<2l^*}x_i\neq x_{i'}\wedge \forall y((\bigwedge_{0\leq i<2l^*}y\neq x_i)\to\lnot\psi_{ext}(y))\\&&\wedge\bigwedge_{0\leq i<2l^*}\psi_{ext}(x_i)\wedge\bigwedge_{0\leq i<l^*}x_{2i}\thicksim x_{2i+1}].
\end{eqnarray*}
We will show that both $\psi'$ and $\lnot\psi'$ holds infinitely often in $M^n_{\bq}$. First let $n\in\bN$ be such that $q_{n-l^*}=1$. Then by choosing for each $0\leq i<l^*$, $x_{2i}:=i+1$ and $x_{2i+1}:=n-l^*+1+i$, we will get that the sequence $(x_0,...,x_{2l^*-1})$ exemplifies $\psi'$ in $M^n_{\bq}$ (with probability 1). As by assumption (4) above the set $\{n\in\bN:q_{n-l^*}=1\}$ is unbounded we have $\limsup_{n\to\infty}[M^n_{\bq}\models\psi']=1$. For the other direction let $n\in\bN$ be such that for each $n-2l^*<l<n$, $q_l=0$. Then $M^n_{\bq}$ satisfies (again with probability 1) for each $x,y\in[1,l^*]\cup(n-l^*,n]$ such that $x\thicksim y$: $x\in[1,l^*]$ iff $y\in[1,l^*]$. Now assume that $(x_0,...,x_{2l^*-1})$ exemplifies $\psi'$ in $M^n_{\bq}$. Then for each $0\leq i<l^*$, $x_{2i}\in[1,l^*]$ iff $x_{2i+1}\in[1,l^*]$. We conclude that the set $[1,l^*]$ is of even size, thus contradicting (1). So we have $Pr[M^n_{\bq}\models\psi']=0$. But by assumption (3) above the set of natural numbers, $n$, for which we have $n-2l^*<l<n$ implies $q_l=0$ is unbounded, and hence we have $\limsup_{n\to\infty}[M^n_{\bq}\models\lnot\psi']=1$ as desired.

We turn to the proof of the case $j\in\{2,3\}$, and as $Gen_3(\bp)\subseteq Gen_2(\bp)$ it is enough to prove that for some $\bq\in Gen_3(\bp)$ the 0-1 law for $L$  strongly fails in $M^n_{\bq}$. Motivated by the example mentioned above appearing in the end of section 4 of \cite{LuSh}, we let $\psi$ be the sentence in $L$ implying that each edge of the graph is contained in a cycle of length 4. Once again we use an inductive construction of $(\bq_1,\bq_2,\bq_3,...)$ in $\gP^{fin}$ such that $\bq=\bigcup_{i>0}\bq_i\in Gen_3(\bp)$ and both $\psi$ and $\lnot\psi$ hold infinitely often in $M^n_{\bq}$.
For $i=1$ let $n_{\bq_1}=n_1:=\min\{l:p_l=1\}+1$ and define $(q_1)_l=0$ if $0<l<n_1-1$ and $(q_1)_{n_1-1}=1$.
For even $i>1$ let $n_{\bq_i}=n_i:=\min\{l>4n_{i-1}:p_l=1\}+1$ and define $(q_i)_l=(q_{i-1})_l$ if $0<l< n_{i-1}$, $(q_i)_l=0$ if $n_{i-1}\leq l<n_i-1$ and $(q_1)_{n_1-1}=1$.
For odd $i>i$ recall $n_1=\min\{l:p_l=1\}+1$ and let $n_{\bq_i}=n_i:=n_{i-1}+n_1$. Now define $(q_i)_l=(q_{i-1})_l$ if $0<l< n_{i-1}$ and $(q_i)_l=0$ if $n_{i-1}\leq l< n_i$.
Clearly we have for even $i>1$, $Pr[M^{n_i+1}_{\bq_{n_i+1}}\models\psi]=0$ and for odd $i>1$ $Pr[M^{n_i}_{\bq_{n_i}}\models\psi]=1$. Note that indeed $\bigcup_{i>0}\bq_i\in Gen_3(\bp)$, hence we are done.
\end{proof}

\begin{rem}\label{RemNew}
In the proof of the failure of the convergence law in the case $j=1$ the assumption $|U^*(\bp)|=\infty$ is not needed, our proof works under the weaker assumption $|U^*(\bp)|\geq 2$ and for some $p>0$, $\{l>0:p_l>p\}$ is infinite. See below more on the case $j=1$ and $1<|U^*(\bp)|<\infty$.
\end{rem}

\begin{lem} \label{Lemj=1}
Let $\bq\in\gP^{inf}$ and assume:
\begin{enumerate}
    \item
    Let $l^*<l^{**}$ be the first two members of $U^*(\bq)$ (in particular assume $|U^*(\bq)|\geq 2$) then $l^{**}=2\cdot l^*$.
    \item
    If $l_1,l_2,l_3$ all belong to $\{l>0:q_l>0\}$ and $l_1+l_2=l_3$ then $\{l_1,l_2,l_3\}=\{l,l+l^*,l+l^{**}\}$ for some $l\geq 0$.
    \item
    Let $l^{***}$ be the first member of $\{l>0:0<q_l<1\}$ (in particular assume $|\{l>0:0<q_l<1\}|\geq 1$) then the set $\{n\in\bN:n\leq l \leq n+l^{**}+l^{***}\Rightarrow q_{l}=0\}$ is infinite.
\end{enumerate}
Then the 0-1 law for $L$ fails for $M^n_{\bq}$.
\end{lem}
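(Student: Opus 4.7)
The plan is to emulate the proof of Theorem~\ref{ThmInf} in the case $j=1$, specifically the part establishing failure of the convergence law (which, by Remark~\ref{RemNew}, needs only $|U^*(\bq)|\geq 2$ plus one auxiliary condition; here hypothesis~(3) will play that role). The key first step is the \emph{triangle rigidity} forced by (1) and (2): with probability $1$, every triangle $\{x<y<z\}$ in $M^n_{\bq}$ has distance-set $\{y-x,z-y,z-x\}=\{l,l+l^*,l+l^{**}\}$ for some $l\geq 0$. The case $l=0$ gives the deterministic ``first-kind'' triangles $\{m,m+l^*,m+l^{**}\}$ (present for each valid $m$ because $q_{l^*}=q_{l^{**}}=1$), while $l>0$ forces $q_{3l^*}>0$ and yields only probabilistic ``second-kind'' triangles involving an edge of length $3l^*$.

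Using this rigidity, I will define a first-order formula $\psi_{ext}(x)$ that, with probability $1$, holds exactly for the boundary vertices $x\in[1,l^*]\cup(n-l^*,n]$. A first attempt is ``$x$ lies in exactly one triangle'', which works directly in the subcase $q_{3l^*}=0$ (where all triangles are first-kind); in the subcase $q_{3l^*}>0$ one must sharpen to ``$x$ lies in exactly one first-kind triangle,'' with first-kind triangles characterized in FO by the specific adjacency pattern of their edges' common neighbors (first-kind triangles sit inside long strips of overlapping first-kind triangles, whereas second-kind triangles are comparatively isolated). Following Theorem~\ref{ThmInf}, I then consider
\[
\psi:=(\exists x)(\exists y)\bigl[\psi_{ext}(x)\wedge\psi_{ext}(y)\wedge x\thicksim y\bigr].
\]
Hypothesis~(3) supplies an infinite subsequence of $n$'s along which $Pr[M^n_{\bq}\models\psi]\leq 1-\delta<1$: for each $m$ provided by (3), choose $n$ so that $[n-2l^*+1,n-1]\subseteq[m,m+l^{**}+l^{***}]$; then every potential cross-boundary edge is killed, and the only $\psi$-witnesses left are within-cluster edges of length $<l^*$, each drawn with probability $<1$, so a simple union bound provides the claimed uniform upper bound.

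For the complementary ``elevated'' subsequence I use $l^{***}$. If $l^{***}<l^*$, each boundary cluster admits within-cluster edges of length $l^{***}$ drawn independently with probability $q_{l^{***}}$, giving $Pr[\psi]\geq 1-(1-q_{l^{***}})^{l^*-l^{***}}>0$ uniformly for large $n$. If $l^{***}\geq l^*$ (so no within-cluster edges occur), I replace $\psi$ by the refinement
\[
\psi':=(\exists x)(\exists y)\bigl[\psi_{ext}(x)\wedge x\thicksim y\wedge\lnot(\exists z)(x\thicksim z\wedge y\thicksim z)\bigr],
\]
asserting that some boundary vertex has a neighbor via an edge lying in no triangle. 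Provided $l^{***}\neq 3l^*$, every $l^{***}$-edge is non-triangular by the rigidity, and each of the $2l^*$ boundary vertices independently captures a potential $l^{***}$-neighbor with probability $q_{l^{***}}\in(0,1)$, giving $Pr[\psi']\to 1-(1-q_{l^{***}})^{2l^*}\in(0,1)$, which by itself suffices to fail the 0-1 law.

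Combining these bounds, $Pr[M^n_{\bq}\models\psi]$ (or $Pr[M^n_{\bq}\models\psi']$) has $\liminf>0$ and $\limsup<1$, so the sequence either converges to a value in $(0,1)$ or fails to converge, and in both events the 0-1 law breaks. The main obstacle I anticipate is the subcase $l^{***}=3l^*$, in which $l^{***}$-edges themselves participate in second-kind triangles and the ``edge in no triangle'' criterion collapses; there I would detect $3l^*$-edges via the stronger FO property that their (always-present) pair of deterministic common neighbors is itself joined by a deterministic $l^*$-edge, yielding an analogous first-order-expressible probabilistic event whose limit probability again sits strictly in $(0,1)$.
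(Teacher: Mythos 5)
Your overall strategy (use the triangle rigidity from (1)--(2) to define the boundary in first order, then exhibit a boundary-edge sentence whose probability is pinned strictly inside $(0,1)$, using (3) to kill cross-boundary edges along a subsequence) is the same in spirit as the paper's, and your case $l^{***}<l^*$ is essentially sound. But the branch $l^{***}\geq l^*$ contains a genuine gap. Your replacement sentence $\psi'$ (``some boundary vertex has a neighbor via an edge lying in no triangle'') only has limiting probability $1-(1-q_{l^{***}})^{2l^*}$ if the \emph{only} possible non-triangular edges at boundary vertices are $l^{***}$-edges. Nothing in the hypotheses restricts the support of $\bq$ to $\{l^*,l^{**},l^{***}\}$: condition (2) permits, for example, a further element $d\in U^*(\bq)$ with $d$ large and additively unrelated to $l^*,2l^*,l^{***}$ (then $\{1,1+d\}$ is a deterministic non-triangular boundary edge and $Pr[\psi']=1$ for all large $n$), or infinitely many lengths $d$ with $q_d\in(0,1)$ and $\sum_d q_d=\infty$ (then $Pr[\psi']\to 1$). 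In such cases $\psi'$ witnesses nothing, and your $\psi$ also fails there (no within-cluster edges, and cross edges may never occur), so the proof does not go through. The structural reason is that when $l^{***}\geq l^*$ the witness edge $\{1,1+l^{***}\}$ has only one endpoint in the $l^*$-boundary, so no sentence quantifying only over the $l^*$-boundary can isolate it from the arbitrary other edges $\bq$ may supply. The paper avoids this by iterating the triangle-count formulas ($\psi^t_{ext}$) to define an arbitrarily long strip $[1,t^*l^{**}]\cup(n-t^*l^{**},n]$ with $t^*l^{**}>l^{***}$, and then using an \emph{exact count} sentence: ``there are exactly $m^*(t^*)$ adjacent pairs of strip vertices,'' where $m^*(t^*)$ is the number of probability-one within-strip edges. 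Any extra edge makes the count overshoot, so the two edges $\{1,1+l^{***}\}$ and $\{n-l^{***},n\}$ give $Pr[\psi]\leq(1-q_{l^{***}})^2<1$ for every large $n$, no matter what else the support contains, while (3) gives $Pr[\psi]\geq(p')^2>0$ along a subsequence --- one uniform argument with no case split on $l^{***}$.

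A secondary, smaller issue: your boundary formula relies on counting ``first-kind'' triangles when $q_{3l^*}>0$, but you never establish that first-kind triangles are first-order definable with probability $1$; this is not routine, since raw triangle-count thresholds provably fail there (a vertex in $[1,l^*]$ lies in $3$ triangles with probability $q_{3l^*}$, matching the deterministic interior count), and your $K_4$-based remedy for $l^{***}=3l^*$ is only sketched. Since the same identification step is needed in your (otherwise correct) case $l^{***}<l^*$ whenever $q_{3l^*}>0$, this part also needs to be carried out before the argument is complete.
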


\begin{proof}
The proof is similar to the case $j=1$ in the proof of Theorem \ref{ThmInf}, hence we will not go into detail. Below $n$ is some large enough natural number (say larger than $3\cdot l^{**}\cdot l^{***}$) such that (3) above holds, and if we say that some property holds in $M^n_{\bq}$ we mean it holds there with probability $1$. Let $\psi^1_{ext}(x)$ be the formula in $L$ implying that $x$ belongs to at most two distinct triangles. Then for all $m\in[n]$:
$$M^n_{\bq}\models \psi^1_{ext}[m] \text{ iff } m\in[1,l^{**}]\cup(n-l^{**},n].$$
Similarly for any natural $t<n/3l^{**}$ define (using induction on $t$): $$\psi^t_{ext}(x):=(\exists y \exists z) x\thicksim y\wedge x\thicksim z\wedge y\thicksim z\wedge (\psi^{t-1}_{ext}(y) \vee \psi^{t-1}_{ext}(z))$$ we then have for all $m\in[n]$:
$$M^n_{\bq}\models \psi^t_{ext}[m] \text{ iff } m\in[1,t l^{**}]\cup(n-t l^{**},n].$$
Now for $1\leq t <n/3l^{**}$ let $m^*(t)$ be the minimal number of edges in $M^n_{\bq}|_{[1,t\cdot l^{**}]\cup(n-t\cdot l^{**},n]}$ i.e only edges with probability one and within one of the intervals are counted, formally $$m^*(t):=2\cdot|\{(m,m'):m<m'\in[1,t\cdot l^{**}]\text{ and }q_{m'-m}=1\}|.$$ Let $1\leq t^* <n/3l^{**}$ be such that $l^{***}<l^{**}\cdot t^*$ (it exists as $n$ is large enough). Note that $m^*(t^*)$ depends only on $\bq$ and not on $n$ hence we can define $$\psi:=\text{"There exists exactly }m^*(t^*)\text{ couples }\{x,y\}\text{ s.t. }\psi^{t^*}_{ext}(x)\wedge\psi^{t^*}_{ext}(y)\wedge x\thicksim y."$$ We then have $Pr[m^n_{\bq}\models\psi]\leq (1-q_{l^{***}})^2<1$ as we have $m^*(t^*)$ edges on $[1,t^* l^{**}]\cup(n-t^* l^{**},n]$ that exist with probability $1$, and at least two additional edges (namely $\{1,l^{***}+1\}$ and $\{n-l^{***},n\}$) that exist with probability $q_{l^{***}}$ each. On the other hand if we define:
$$p':=\prod\{1-q_{m'-m}:m<m'\in[1,t^*\cdot l^{**}]\text{ and }q_{m'-m}<1\}$$ and note that $p'$ does not depend on $n$, then (recalling assumption (3) above) we have $Pr[m^n_{\bq}\models\psi]\geq (p')^2>0$ thus completing the proof.
\end{proof}

\begin{lem} \label{Leml1l2}
Let $\bq\in\gP^{inf}$ be such that for some $l_1<l_2\in\bN\setminus\{0\}$ we have: $0<p_{l_1}<1$, $p_{l_2}=1$ and $p_l=0$ for all $l\not\in\{l_1,l_2\}$. Then the 0-1 law for $L$ fails for $M^n_{\bq}$.
\end{lem}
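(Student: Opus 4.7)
The plan is to exhibit a single $L$-sentence $\psi$ whose probability in $M^n_{\bq}$ is trapped in a proper sub-interval of $(0,1)$ for all sufficiently large $n$; this forces failure of the $0$-$1$ law regardless of whether the probability has a limit. The structural observation that makes this easy is that since $q_{l_2}=1$ the distance-$l_2$ edges are present deterministically, so for $n\geq 2l_2$ every ``interior'' vertex $i\in(l_2,n-l_2]$ has both $\{i-l_2,i\}$ and $\{i,i+l_2\}$ as edges and hence degree at least $2$; only the boundary vertices in $B_n:=[1,l_2]\cup(n-l_2,n]$ can possibly have degree $1$. I take $\psi$ to be the $L$-sentence ``there exists a vertex of degree exactly $1$'', namely $\psi:=\exists x\exists y(x\thicksim y\wedge\forall z(x\thicksim z\to z=y))$.

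For the lower bound, the only potential neighbors of vertex $1$ in $[n]$ (once $n\geq 1+l_2$) are $1+l_1$ and $1+l_2$, whose edges to $1$ appear with probabilities $q_{l_1}$ and $1$ respectively. So vertex $1$ has degree exactly $1$ iff the random edge $\{1,1+l_1\}$ is absent, an event of probability $1-q_{l_1}>0$. Therefore $Pr[M^n_{\bq}\models\psi]\geq 1-q_{l_1}$.

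For the upper bound, I consider the $2l_2$ random $l_1$-edges $\{\{i,i+l_1\}:i\in[1,l_2]\}\cup\{\{j-l_1,j\}:j\in(n-l_2,n]\}$. Provided $n>2l_2+2l_1$ these are $2l_2$ pairwise distinct edges, so their indicator variables are mutually independent Bernoulli trials with parameter $q_{l_1}$. On the event $E$ that all of them are present, which has probability $q_{l_1}^{2l_2}>0$ (constant in $n$), every vertex of $B_n$ acquires at least one $l_1$-neighbor in addition to its deterministic $l_2$-neighbor, hence has degree at least $2$. Combined with the interior vertices, $E$ forces $M^n_{\bq}\models\lnot\psi$, so $Pr[M^n_{\bq}\models\lnot\psi]\geq q_{l_1}^{2l_2}$.

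Putting these bounds together, for all large $n$ we have $Pr[M^n_{\bq}\models\psi]\in[1-q_{l_1},1-q_{l_1}^{2l_2}]\subset(0,1)$, so every accumulation point of this sequence lies strictly between $0$ and $1$ and the $0$-$1$ law fails. There is no real obstacle here; the argument is the degree-based analogue of the first-order boundary-identification formulas $\psi_{ext}$ and $\psi^t_{ext}$ used in the proof of Theorem~\ref{ThmInf} and Lemma~\ref{Lemj=1}, simplified by the fact that the single random coordinate $l_1$ supplies all the randomness and no further rigidity conditions on $\bq$ need to be enforced.
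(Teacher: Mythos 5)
Your proof is correct and follows essentially the same route as the paper's: both exploit that the deterministic $l_2$-edges force every interior vertex to have degree at least $2$, and then bound the probability of a fixed boundary-degree sentence away from $0$ (via vertex $1$ and its single random $l_1$-edge) and away from $1$ (via a constant-probability event eliminating all boundary witnesses). Your witness sentence (``some vertex has degree exactly $1$'') is simpler than the paper's (a degree-$1$ vertex whose unique neighbor has at least three neighbors), and your complement bound via the $2l_2$ mutually independent $l_1$-edges is if anything cleaner than the paper's per-vertex product bound, but the underlying strategy is the same.
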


\begin{proof}
Let $\psi$ be the sentence in $L$ "saying" that some vertex has exactly one neighbor and this neighbor has at least three neighbors. Formally: $$\psi:=(\exists x) (\exists !y)x\thicksim y\wedge(\forall z) x \thicksim z\to(\exists u_1\exists u_2\exists u_3)\bigwedge_{0<i<j\leq3}u_i\neq u_j\wedge\bigwedge_{0<i\leq3}z \thicksim u_i.$$
We first show that for some $p>0$ and $n_0\in\bN$, for all $n>n_0$ we have $Pr[M^n_{\bq}\models\psi]>p$. To see this simply take $n_0=l_1+l_2+1$ and $p=(1-p_{l_1})(p_{l_1})$. Now for $n>n_0$ in $M^n_{\bq}$, with probability $1-p_{l_1}$ the node $1\in[n]$ has exactly one neighbor (namely $1+l_2\in[n]$) and with probability at least $p_{l_1}$, $1+l_2$ is connected to $1+l_1+l_2$, and hence has three neighbors ($1$, $1+2l_2$ and $1+l_1+l_2$). This yields the desired result. On the other hand for some $p'>0$ we have for all $n\in\bN$, $Pr[M^n_{\bq}\models\lnot\psi]>p'$. To see this note that for all $n$, only members of $[1,l_2]\cup(n-l_2,n]$ can possibly exemplify $\psi$, as all members of $(l_2,n-l_2]$ have at least two neighbors with probability one. For each $x\in[1,l_2]\cup(n-l_2,n]$, with probability at least $(1-p_1)^2$, $x$ dose not exemplify $\psi$ (since the unique neighbor of $x$ has less then three neighbors). As the size of $[1,l_2]\cup(n-l_2,n]$ is $2\cdot l_2$ we get $Pr[M^n_{\bq}\models\lnot\psi]>(1-p_1)^{2l_2}:=p'>0$. Together we are done.
\end{proof}

\begin{lem} \label{Lem01}
Let $\bp\in\gP^{inf}$ be such that $|U^*(\bp)|<\infty$ and $p_i\in\{0,1\}$ for $i>0$. Then $M^n_{\bp}$ satisfy the 0-1 law for $L$.
\end{lem}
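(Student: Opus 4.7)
The first observation is that under these hypotheses $M^n_{\bp}$ is deterministic: writing $U^{*}:=U^{*}(\bp)$ and recalling that every $p_i$ is either $0$ or $1$, the edge $\{i,j\}$ is present in $M^n_{\bp}$ with probability $1$ if $|i-j|\in U^{*}$ and with probability $0$ otherwise. Consequently $Pr[M^n_{\bp}\models\psi]\in\{0,1\}$ for every $\psi\in L$, so to establish the 0-1 law it suffices to show that the truth value of $\psi$ in $M^n_{\bp}$ is eventually constant in $n$.

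The plan is to reduce this to the classical stabilisation of the first-order theory of finite linear orders. Because $U^{*}$ is a finite subset of $\bN$, the edge relation of $M^n_{\bp}$ is uniformly first-order definable from $<$ on $[n]$: for each $d\in U^{*}$ the relation ``$|i-j|=d$'' is expressed by a formula $\theta_d(x,y)$ in the language of order (saying that there are exactly $d-1$ elements strictly between $x$ and $y$), and taking the disjunction $\theta_{\thicksim}(x,y):=\bigvee_{d\in U^{*}}\theta_d(x,y)$ gives a single formula defining adjacency. Hence every $L$-sentence $\psi$ translates to a sentence $\psi^{\sharp}$ in the language of $<$, of quantifier rank bounded in terms of $\psi$ and the fixed constant $\max U^{*}$, such that $M^n_{\bp}\models\psi$ iff $([n],<)\models\psi^{\sharp}$ for every $n$.

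Now invoke the classical Ehrenfeucht--Fra\"iss\'e theorem for finite linear orders: for every $k\in\bN$ there is $N_k$ (one may take $N_k=2^k$) such that Duplicator wins the $k$-round EF game on $([n],<)$ versus $([m],<)$ whenever $n,m\geq N_k$. Therefore, for any first-order sentence $\phi$ in the language of $<$ of quantifier rank $\leq k$, the truth of $\phi$ in $([n],<)$ is the same for all $n\geq N_k$. Applied to $\psi^{\sharp}$, this yields that $M^n_{\bp}\models\psi$ is eventually constant in $n$, which together with the first paragraph gives the 0-1 law.

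There is no substantial obstacle beyond bookkeeping. The one point to be careful about is that the quantifier rank of the translated sentence $\psi^{\sharp}$ depends only on $\psi$ and on the constant $\max U^{*}$ (both fixed once $\bp$ is given), so that for each fixed $\psi$ one really does get a single threshold $N$ beyond which the truth of $\psi$ in $M^n_{\bp}$ no longer changes. Everything else is a direct appeal to the standard EF-game analysis of finite linear orders.
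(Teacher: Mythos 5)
Your proof is correct and follows essentially the same route as the paper: observe the graph is deterministic, interpret it first-order in a simple linear background structure on $[n]$, and conclude from the fact that the bounded-quantifier-rank theory of that structure stabilizes for large $n$. The only cosmetic difference is that you interpret adjacency in $([n],<)$ and quote the Ehrenfeucht--Fra\"iss\'e analysis of finite linear orders, whereas the paper interprets it in $([n],Suc)$ and cites a special case of Gaifman's theorem; both yield the same eventual constancy of $Pr[M^n_{\bp}\models\psi]\in\{0,1\}$.
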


\begin{proof}
Let $S^n$ be the (not random) structure in vocabulary $\{Suc\}$, with universe $[n]$ and $Suc$ is the successor relation on $[n]$. It is straightforward to see that any sentence $\psi\in L$ has a sentence $\psi^S\in\{Suc\}$ such that
\begin{displaymath}
Pr[M^n_{\bp}\models\psi]=
\left\{ \begin{array}{ll}
      1&S^n\models\psi^S\\
      0&S^n\not\models\psi^S.
\end{array}\right.
\end{displaymath}
Also by a special case of Gaifman's result from \cite{Ga} we have: for each $k\in\bN$ there exists some $n_k\in\bN$ such that if $n,n'>n_k$ then $S^n$ and $S^{n'}$ have the same first order theory of quantifier depth $k$. Together we are done.
\end{proof}

\begin{conc} \label{ConcFin}
Let $\bp\in\gP^{inf}$ be such that $0<|U^*(\bp)|<\infty$.
\begin{enumerate}
    \item The 2-hereditary 0-1 law holds for $\bp$ iff $|\{l>0:p_l>0\}|>1$.
    \item The 3-hereditary 0-1 law holds for $\bp$ iff $\{l>0:0<p_l<1\}\neq\emptyset$.
    \item If furthermore $1<|U^*(\bp)|$ then the 1-hereditary 0-1 law holds for $\bp$ iff $\{l>0:0<p_l<1\}\neq\emptyset$.
\end{enumerate}
\end{conc}

\begin{proof}
For (1) note that if indeed $|\{i>0:p_l>0\}|>1$ then some $\bq\in Gen_2(\bp)$ is as in the assumption of Lemma \ref{Leml1l2}, otherwise any $\bq\in Gen_2(\bp)$ has at most $1$ nonzero member hence $M^n_{\bq}$ satisfy the 0-1 law by either \ref{Lem01} or \ref{ThmPrevL}.

For (2) note that if $\{i>0:0<p_l<1\}\neq\emptyset$ then some $\bq\in Gen_3(\bp)$ is as in the assumption of Lemma \ref{Leml1l2}, otherwise any $\bq\in Gen_3(\bp)$ is as in the assumption of Lemma \ref{Lem01} and we are done.

Similarly for (3) note that if $1<|U^*(\bp)|$ and $\{l>0:0<p_l<1\}\neq\emptyset$ then some $\bq\in Gen_1(\bp)$ satisfies assumptions (1)-(3) of Lemma \ref{Lemj=1}, otherwise any $\bq\in Gen_1(\bp)$ is as in the assumption of Lemma \ref{Lem01} and we are done.
\end{proof}

\section{When exactly one probability equals 1}

In this section we assume:
\begin{assu}
$\bp$ is a fixed member of $\gP^{inf}$ such that $|U^*(\bp)|=1$ hence denote $U^*(\bp)=\{l^*\}$, and assume
$$(*)'\qquad\qquad \lim_{n\to\infty}\log(\prod_{l\in[n]\setminus\{l^*\}}(1-p_l))/\log(n)=0.$$
\end{assu}
We try to determine when the $1$-hereditary 0-1 law holds.
The assumption of $(*)'$ is justified as the proof in section 2 works also in this case and in fact in any case that $U^*(\bp)$ is finite. To see this replace in section 2 products of the form $\prod_{l<n}(1-p_l)$ by $\prod_{l<n,l\not\in U^*(\bp)}(1-p_l)$, sentences of the form "$x$ has valency $m$" by "$x$ has valency $m+2|U^*(\bp)|$", and similar simple changes. So if $(*)'$ fails then the $1$-hereditary weak convergence law fails, and we are done.
It seems that our ability to "identify" the $l^*$-boundary (i.e. the set $[1,l^*]\cup(n-l^*,n]$) in $M^n_{\bp}$ is closely related to the holding of the 0-1 law.
In Conclusion \ref{LemOne1Neg} we use this idea and give a necessary condition on $\bp$ for the $1$-hereditary weak convergence law. The proof uses methods similar to those of the previous sections. Finding a sufficient condition for the $1$-hereditary 0-1 law seems to be harder. It turns out that the analysis of this case is, in a way, similar to the analysis when we add the successor relation to our vocabulary. This is because the edges of the form $\{l,l+l^*\}$ appear with probability $1$ similarly to the successor relation. There are, however, some obvious differences. Let $L^+$ be the vocabulary $\{\thicksim,S\}$, and let ${(M^+)}^n_{\bp}$ be the random $L^+$ structure with universe $[n]$, $\thicksim$ is the same as in $M^n_{\bp}$, and $S^{{(M^+)}^n_{\bp}}$ is the successor relation on $[n]$. Now if for some $l^{**}>0$, $0<p_{l^{**}}<1$ then $({M^+})^n_{\bp}$ does not satisfy the 0-1 law for $L^+$. This is because the elements $1$ and $l^{**}+1$ are definable in $L^+$ and hence some $L^+$ sentence holds in ${(M^+)}^n_{\bp}$ iff $\{1,l^{**}+1\}$ is an edge of ${(M^+)}^n_{\bp}$ which holds with probability $p_{l^{**}}$. In our case, as in $L$ we can not distinguish edges of the form $\{l,l+l^*\}$ from the rest of the edged, the 0-1 law may hold even if such $l^{*}$ exists. In Lemma \ref{LemOne1} below we show that if, in fact, we can not "identify the edges" in $M^n_{\bp}$ then the 0-1 law, holds in $M^n_{\bp}$. This is translated in Theorem \ref{ThmOne1} to a sufficient condition on $\bp$ for the 0-1 law holding in $M^n_{\bp}$, but not necessarily for the $1$-hereditary 0-1 law. The proof uses "local" properties of graphs. It seems that some form of "$1$-hereditary" version of \ref{ThmOne1} is possible. In any case we could not find a necessary and sufficient condition for the $1$-hereditary 0-1 law, and the analysis of this case is not complete.

We first find a necessary condition on $\bp$ for the $1$-hereditary weak convergence law. Let us start with a definition of a structure on a sequence $\bq\in\gP$ that enables us to "identify" the $l^*$-boundary in $M^n_{\bq}$.

\begin{df}
\begin{enumerate}
\item
A sequence $\bq\in\gP$ is called nice if:
\begin{enumerate}
    \item $U^*(\bq)=\{l^*\}$.
    \item If $l_1,l_2,l_3\in\{l<n_{\bq}:q_l>0\}$ then $l_1+l_2\neq l_3$.
    \item If $l_1,l_2,l_3,l_4\in\{l<n_{\bq}:q_l>0\}$ then $l_1+l_2+l_3\neq l_4$.
    \item If $l_1,l_2,l_3,l_4\in\{l<n_{\bp}:q_l>0\}$, $l_1+l_2=l_3+l_4$ and $l_1+l_2<n_{\bq}$ then $\{l_1,l_2\}=\{l_3,l_4\}$.
\end{enumerate}

\item Let $\phi^1$ be the following $L$-formula:
$$\phi^1(y_1,z_1,y_2,z_2):=y_1\thicksim z_1\wedge z_1\thicksim z_2\wedge z_2\thicksim y_2\wedge y_2\thicksim y_1\wedge y_1\neq z_2\wedge z_1\neq y_2.$$

\item For $k\geq 0$ define by induction on $k$ the $L$-formula $\phi_k^1(y_1,z_1,y_2,z_2)$ by:
    \begin{itemize}
    \item $\phi_0^1(y_1,z_1,y_2,z_2):=y_1=y_2\wedge z_1=z_2\wedge y_1\neq z_1$.
    \item $\phi_1^1(y_1,z_1,y_2,z_2):=\phi^1(y_1,z_1,y_2,z_2)$.
    \item $\phi_{k+1}^1(y_1,z_1,y_2,z_2):=\\(\exists y\exists z)[(\phi_k^1(y_1,z_1,y,z)\wedge \phi^1(y,z,y_2,z_2))\vee(\phi_k^1(y_2,z_2,y,z)\phi^1(y_1,z_1,y,z))]$.
    \end{itemize}
\item For $k_1,k_2,\in\bN$ let $\phi_{k_1,k_2}^2$ be the following $L$-formula:
$$\phi_{k_1,k_2}^2(y,z):=(\exists x_1\exists x_2\exists x_3\exists x_4)[\phi_{k_1}^1(y,z,x_1,x_2)\wedge\phi_{k_2}^1(x_2,x_1,x_3,x_4)\wedge \lnot x_3\thicksim x_4].$$
\item For $k_1,k_2,\in\bN$ let $\phi_{k_1,k_2}^3$ be the following $L$ formula:
$$\phi_{k_1,k_2}^3(x):=(\exists !y)[x\thicksim y\wedge\lnot\phi_{k_1,k_2}^2(x,y)].$$
\end{enumerate}
\end{df}

\begin{obs} \label{ObsEx}
Let $\bq\in\gP$ be nice and $n\in\bN$ be such that $n<n_{\bq}$. Then the following holds in $M^n_{\bq}$ with probability $1$:
\begin{enumerate}
    \item For $y_1,z_1,y_2,z_2\in[n]$, if $M^n_{\bq}\models\phi^1[y_1,z_1,y_2,z_2]$ then $y_1-z_1=y_2-z_2$. (Use (d) in the definition of nice).
    \item For $k\in\bN$ and $y_1,z_1,y_2,z_2\in[n]$, if $M^n_{\bq}\models\phi_{k}^1[y_1,z_1,y_2,z_2]$ then $y_1-z_1=y_2-z_2$. (Use (1) above and induction on $k$).
    \item For $k_1,k_2\in\bN$ and $y,z\in[n]$, if $M^n_{\bq}\models\phi_{k_1,k_2}^2[y,z]$ then $|y-z|\neq l^*$. (Use (2) above and the definition of $\phi^2_{k_1,k_2}(y,z)$).
    \item For $k_1,k_2\in\bN$ and $x\in[n]$, if $M^n_{\bq}\models\phi_{k_1,k_2}^3[x]$ then $x\in[1,l^*]\cup(n-l^*,n]$. (Use (3) above).
\end{enumerate}
\end{obs}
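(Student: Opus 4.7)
The plan is to prove clauses (1)--(4) in the stated order, following the hint chain: (1) from niceness, (2) by induction using (1), (3) from (2) plus the fact that $q_{l^*}=1$, and (4) from (3). The ``probability $1$'' qualifier is benign since only finitely many tuples in $[n]$ are relevant: I restrict throughout to the probability-one event on which every edge $\{u,v\}$ appearing in $M^n_{\bq}$ has $q_{|u-v|}>0$, which turns each clause into a purely deterministic implication about tuples.

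For clause (1), fix $y_1,z_1,z_2,y_2$ with $M^n_{\bq}\models\phi^1$ and define signed increments around the $4$-cycle $y_1\to z_1\to z_2\to y_2\to y_1$: $a_1=z_1-y_1$, $a_2=z_2-z_1$, $a_3=y_2-z_2$, $a_4=y_1-y_2$. Then $a_1+a_2+a_3+a_4=0$ and each $|a_i|\in\{l:q_l>0\}$, and the target equality $y_1-z_1=y_2-z_2$ becomes $a_1+a_3=0$. Niceness clause (c) forbids any $1$--$3$ split of signs (it would make one $|a_i|$ equal the sum of the other three), so the signs must split $2$--$2$. Clause (d), applied to the resulting equation $|a_i|+|a_j|=|a_k|+|a_\ell|$ where $\{i,j\}$ and $\{k,\ell\}$ are the same-sign pairs, forces the unordered pair of absolute values on each side to match. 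A short check of the three possible pairings of $\{1,2,3,4\}$ shows that two of them force $y_1=z_2$ or $z_1=y_2$ (contradicting the explicit distinctness conjuncts inside $\phi^1$), leaving only the pairing that gives $a_1=-a_3$.

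Clause (2) is by induction on $k$. The base $k=0$ is immediate from the definition of $\phi^1_0$, and $k=1$ is clause (1). For the step, each disjunct of $\phi^1_{k+1}$ supplies an intermediate pair $(y,z)$ for which the induction hypothesis on the $\phi^1_k$-segment gives $y_1-z_1=y-z$ and clause (1) on the $\phi^1$-segment gives $y-z=y_2-z_2$; chaining yields the conclusion. For clause (3), suppose $\phi^2_{k_1,k_2}(y,z)$ is witnessed by $x_1,x_2,x_3,x_4$. Clause (2) applied to the two $\phi^1_{\cdot}$-conjuncts gives $y-z=x_1-x_2$ and $x_2-x_1=x_3-x_4$, so $|y-z|=|x_3-x_4|$. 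Were $|y-z|=l^*$, then $q_{|x_3-x_4|}=q_{l^*}=1$ would force $x_3\thicksim x_4$ a.s., contradicting the conjunct $\lnot x_3\thicksim x_4$ inside $\phi^2$.

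For clause (4), I argue the contrapositive. If $x\in(l^*,n-l^*]$ then both $y^{-}:=x-l^*$ and $y^{+}:=x+l^*$ lie in $[n]$, are connected to $x$ almost surely (since $q_{l^*}=1$), and by the contrapositive of clause (3) each satisfies $\lnot\phi^2_{k_1,k_2}(x,y^{\pm})$; this produces two distinct witnesses to the existential inside $\phi^3_{k_1,k_2}(x)$, violating its uniqueness clause. Hence $\phi^3(x)$ forces $x\in[1,l^*]\cup(n-l^*,n]$. The main obstacle is the case analysis in clause (1): one must verify carefully that the two ``bad'' matchings of absolute values really do collide with the distinctness requirements $y_1\neq z_2$ and $z_1\neq y_2$, since niceness alone permits them abstractly. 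Everything else is a straightforward chaining of previously established equalities.
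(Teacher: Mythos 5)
Your clauses (2)--(4) are fine and follow exactly the chaining the paper intends (the paper's proof of this observation consists only of the parenthetical hints). The gap is in clause (1): you invoke clause (d) of niceness for the equation $|a_i|+|a_j|=|a_k|+|a_\ell|$ without checking (d)'s proviso that the common sum be $<n_{\bq}$. When the two same-sign increments are adjacent along the $4$-cycle (the splits $\{1,2\}$ vs.\ $\{3,4\}$ and $\{2,3\}$ vs.\ $\{1,4\}$) the common sum is a difference of two vertices of $[n]$, hence at most $n-1<n_{\bq}$, and (d) applies; but when the same-sign increments are the opposite ones ($a_1,a_3$ of one sign, $a_2,a_4$ of the other) the common sum is $(z_1-y_1)+(y_2-z_2)$ up to sign, which can be as large as $2(n-1)$ and need not be below $n_{\bq}$ when $\bq$ is finite. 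That is precisely the split that must be excluded outright (there $a_1=-a_3$ is impossible), so the omission is not cosmetic. Your summary of the case analysis is also off: it is not that two of the three sign-splits collide with $y_1\neq z_2$ or $z_1\neq y_2$; rather, each of the two adjacent splits yields, via (d), either the desired equality or one of those forbidden coincidences, and only the opposite split has to be ruled out entirely --- exactly where (d) may be unavailable.

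Moreover the gap cannot be closed from the stated hypotheses alone: for finite nice $\bq$ clause (1) can actually fail. Take $n_{\bq}=200$, $q_1=1$ (so $l^*=1$), $q_{90},q_{100},q_{110}\in(0,1)$ and $q_l=0$ otherwise; conditions (a)--(d) hold, with (d) vacuous for the only coinciding pair-sums $90+110=100+100=200\not<n_{\bq}$. With $n=150$ and $(y_1,z_1,y_2,z_2)=(1,101,111,11)$, the four edges required by $\phi^1$ (lengths $100,90,100,110$) occur simultaneously with positive probability, $y_1\neq z_2$ and $z_1\neq y_2$, yet $y_1-z_1=-100\neq 100=y_2-z_2$. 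So to make your argument for (1) complete you need an extra assumption killing the opposite split: either $n_{\bq}=\omega$ (then (d)'s proviso is vacuous and your case check closes), or a smallness condition on the support relative to $n$, which is what holds where the paper actually uses the observation (e.g.\ in Claim \ref{ClaimNiceQ} one has $q_l>0\Rightarrow l<\lfloor n/3\rfloor$, so every two-sum coming from a $4$-cycle is $<n_{\bq}$). A last, minor point: for clauses (3) and (4) you also use the probability-one event that every pair at distance $l^*$ is joined by an edge, which your declared "every present edge has positive $q$" event does not cover; this is harmless but should be stated.
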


The following claim shows that if $\bq$ is nice (and have a certain structure) then, with probability close to $1$, $\phi^3_{3,0}[y]$ holds in $M^n_{\bq}$ for all $y\in[1,l^*]\cup(n-l^*,n]$. This, together with (4) in the observation above gives us a "definition" of the $l^*$-boundary in $M^n_{\bq}$.

\begin{claim} \label{ClaimNiceQ}
Let $\bq\in\gP^{fin}$ be nice and denote $n=n_{\bq}$. Assume that for all $l>0$, $q_l>0$ implies $l<\lfloor n/3 \rfloor$. Assume further that for some $\epsilon>0$, $0<q_l<1\Rightarrow\epsilon<q_l<1-\epsilon$. Let $y_0\in[1,l^*]\cup(n-l^*,n]$. Denote $m:=|\{0<l<n_{\bp}:0<q_l<1\}|$. Then:
$$Pr[M^n_{\bq}\models\lnot\phi^3_{3,0}[y_0]]\leq (\sum_{\{y\in[n]:|y_0-y|\neq l^*\}}q_{|y_0-y|})(1-\epsilon^{11})^{m/2-1}.$$
\end{claim}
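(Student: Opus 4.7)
The plan is to observe first that, by definition, $\phi^3_{3,0}[y_0]$ asserts existence of a \emph{unique} $y\in[n]$ with $y_0\thicksim y$ and $\lnot\phi^2_{3,0}(y_0,y)$. Because $y_0\in[1,l^*]\cup(n-l^*,n]$, exactly one of $y_0\pm l^*$ lies in $[n]$; call it $y^*$. Observation~\ref{ObsEx}(3) gives $\lnot\phi^2_{3,0}(y_0,y^*)$, and $y_0\thicksim y^*$ holds with probability one since $q_{l^*}=1$. So with probability one $y^*$ is one such $y$, and $\lnot\phi^3_{3,0}[y_0]$ reduces to the existence of a \emph{second} qualifying $y$: some $y\in[n]$ with $|y_0-y|\neq l^*$, $y_0\thicksim y$, and $\lnot\phi^2_{3,0}(y_0,y)$. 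A union bound, together with conditioning on the edge $\{y_0,y\}$, then gives
\begin{equation*}
Pr[M^n_{\bq}\models\lnot\phi^3_{3,0}[y_0]]\le\sum_{\{y\in[n]:|y_0-y|\neq l^*\}}q_{|y_0-y|}\cdot P_y,\qquad P_y:=Pr[\lnot\phi^2_{3,0}(y_0,y)\mid y_0\thicksim y].
\end{equation*}
It therefore suffices to prove $P_y\le(1-\epsilon^{11})^{m/2-1}$ for each such $y$.

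The next step is to bound $P_y$ by exhibiting $m/2-1$ disjoint \emph{attempts} at a witness for $\phi^2_{3,0}(y_0,y)$, each succeeding with conditional probability at least $\epsilon^{11}$. Unpacking the formula, a witness is a quadruple $(x_1,x_2,x_3,x_4)$ encoding a chain of three $4$-cycles from $(y_0,y)$ to $(x_1,x_2)$ with shift vectors $e_1,e_2,e_3$, plus the requirement that the horizontal edge at distance $|y_0-y|$ closing the chain be \emph{absent}. Setting $A:=\{l>0:0<q_l<1\}$ (so $|A|=m$), I partition $A\setminus\{|y_0-y|\}$ into at least $m/2-1$ disjoint pairs $(l,l')$, and each pair yields a candidate attempt in which the $e_i$ have absolute values in $\{l,l'\}$. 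Each such attempt is determined by a bounded number (at most $11$) of edge or non-edge events involving probabilities bounded below by $\epsilon$, by the assumption $\epsilon<q_l<1-\epsilon$ on $A$; hence each attempt succeeds with probability at least $\epsilon^{11}$.

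The technical heart of the proof, and its main obstacle, is verifying that these $m/2-1$ attempts are conditionally independent given $y_0\thicksim y$. Here the niceness conditions (a)--(d) are essential: (b) and (c) preclude accidental triangles and $3$-sums of positive-weight indices, while (d) guarantees that distinct pairs $(l,l'),(\hat l,\hat l')\subseteq A$ produce attempts on disjoint vertex- and edge-sets. In particular no random edge is used by two attempts, nor by an attempt and the conditioned edge $\{y_0,y\}$. The hypothesis $q_l>0\Rightarrow l<\lfloor n/3\rfloor$ ensures every attempt fits inside $[n]$, so all $m/2-1$ attempts are realizable. Given the independence, the probability that \emph{all} attempts fail is at most $(1-\epsilon^{11})^{m/2-1}$, whence $P_y\le(1-\epsilon^{11})^{m/2-1}$; combining with the union bound yields the claim. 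Once the combinatorial independence is verified from (b)--(d) by a routine case analysis, the remaining estimates are elementary.
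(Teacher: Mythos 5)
Your proposal is correct and follows essentially the same route as the paper's proof: reduce $\lnot\phi^3[y_0]$ (a.s.) to the existence of a second qualifying neighbor at distance $\neq l^*$, apply a first-moment/union bound with conditioning on that edge, and bound the conditional probability of $\lnot\phi^2$ by exhibiting roughly $m/2-1$ witness configurations (three $4$-cycles plus one missing edge, i.e.\ $11$ events each of probability at least $\epsilon$) indexed by pairs of lengths from $\{l:0<q_l<1\}$, whose mutual independence follows from the niceness conditions. The only cosmetic difference is that you take disjoint pairs and invoke niceness (d) for edge-disjointness, while the paper selects pairs with distinct sums --- conditions which niceness makes equivalent.
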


\begin{proof}
We deal with the case $y_0\in[1,l^*]$, the case $y_0\in(n-l^*,n]$ is symmetric. Let $z_0\in[n]$ be such that $l_0:=z_0-y_0\in\{0<l<n:0<q_l<1\}$ (so $l_0\neq l^*$ and $l_0<\lfloor n/3\rfloor$), and assume that $M^n_{\bq}\models y_0\thicksim z_0$. For any $l_1,l_2<\lfloor n/3\rfloor$ denote (see diagram below): $y_1:=y_0+l_1$, $y_2:=y_0+l_2$, $y_3:=y_2+l_1=y_1+l_2=y_0+l_1+l_2$ and symmetrically for $z_1, z_2, z_3$ (so $y_i$ and $z_i$ for $i\in\{0,1,2,3\}$ all belong to $[n]$).
\xymatrix{
y_0 \ar@{-}[rrr]^{l_0} \ar@{-}[d]^{l_1} \ar@{-}[rrddd]^{l_2}  & & & z_0 \ar@{-}[d]^{l_1} \ar@{-}[rrddd]^{l_2} & &\\
y_1 \ar@{-}[rrddd]^{l_2} &  & & z_1 \ar@{-}[rrddd]^{l_2} & &\\
&&&&&\\
& & y_2 \ar@{-}[rrr]^{l_0} \ar@{-}[d]^{l_1} & & & z_2 \ar@{-}[d]^{l_1} \\
& & y_3 \ar@{-}[rrr]^{l_0} & & & z_3 }
The following holds in $M^n_{\bq}$ with probability $1$: \underline{If} for some $l_1,l_2<\lfloor n/3\rfloor$ such that $(l_0,l_1,l_2)$ is without repetitions, we have:
\begin{itemize}
    \item[$(*)_1$] $(y_0,y_1,y_3,y_2)$, $(z_0,z_1,z_3,z_2)$ and $(y_2,y_3,z_3,z_2)$ are all circles in $M^n_{\bq}$.
    \item[$(*)_2$] $\{y_1,z_1\}$ is \emph{not} an edge of $M^n_{\bq}$.
\end{itemize}
\underline{Then} $M^n_{\bq}\models\phi^2_{0,3}[y_0,z_0]$. Why? As $(y_1,y_0,z_0,z_1)$, in the place of $(x_1,x_2,x_3,x_4)$, exemplifies  $M^n_{\bp}\models\phi^2_{0,3}[y_0,z_0]$.
Let us fix $z_0=y_0+l_0$ and assume that $M^n_{\bq}\models y_0\thicksim z_0$. (Formally we condition the probability space $M^n_{\bq}$ to the event $y_0\thicksim z_0$.) Denote $$L^{y_0,z_0}:=\{(l_1,l_2):q_{l_1},q_{l_2}>0,l_0\neq l_1,l_0\neq l_2, l_1\neq l_2\}.$$ For $(l_1,l_2)\in L^{y_0,z_0}$, the probability that $(*)_1$ and $(*)_2$ holds, is $(1-q_{l_0})(q_{l_0})^2(q_{l_1})^4(q_{l_2})^4$.
Denote the event that $(*)_1$ and $(*)_2$ holds by $E^{y_0,z_0}(l_1,l_2)$. Note that if $(l_1,l_2),(l'_1,l'_2)\in L^{y_0,z_0}$ are such that $(l_1,l_2,l'_1,l'_2)$ is without repetitions and $l_1+l_2\neq l'_1+l'_2$ then the events $E^{y_0,z_0}(l_1,l_2)$ and $E^{y_0,z_0}(l'_1,l'_2)$ are independent. Now recall that $m:=|\{l>0:\epsilon<q_l<1-\epsilon\}|$. Hence we have some $L'\subseteq L^{y_0,z_0}$ such that: $|L'|=\lfloor m/2-1\rfloor$, and if $(l_1,l_2),(l'_1,l'_2)\in L'$ then the events $E^{y_0,z_0}(l_1,l_2)$ and $E^{y_0,z_0}(l'_1,l'_2)$ are independent. We conclude that $$Pr[M^n_{\bq}\models\lnot\phi^2_{0,3}[y_0,z_0]|M^n_{\bq}\models y_0\thicksim z_0]\leq$$$$(1-(1-q_{l_0})(q_{l_0})^2 (q_{l_1})^4(q_{l_2})^4)^{m/2-1}\leq(1-\epsilon^{11})^{m/2-1}.$$
This is a common bound for all $z_0=y_0+l_0$, and the same bound holds for all $z_0=y_0-l_0$ (whenever it belongs to $[n]$). We conclude that the expected number of $z_0\in[n]$ such that: $|z_0-y_0|\neq l^*$, $M^n_{\bq}\models y_0\thicksim z_0$ and $M^n_{\bq}\models\lnot\phi^2_{0,3}[y_0,z_0]$ is at most  $(\sum_{\{y\in[n]:|y_0-y|\neq l^*\}}q_{|y_0-y|})(1-\epsilon^{11})^{m/2-1}.$
Now by (3) in Observation \ref{ObsEx}, $M^n_{\bq}\models\phi^2_{0,3}[y_0,y_0+l^*]$. By Markov's inequality and the definition of $\phi^3_{0,3}(x)$ we are done.
\end{proof}

We now prove two lemmas which allow us to construct a sequence $\bq$ such that for $\varphi:=\exists x\phi^3_{0,3}(x)$ both $\varphi$ and $\lnot\varphi$ will hold infinitely often in $M^n_{\bq}$.

\begin{lem} \label{LemOne1Pos}
 Assume$\bp$ satisfy $\sum_{l>0}p_l=\infty$, and let $\bq\in Gen_1^r(\bp)$ be nice. Let $\zeta>0$ be some rational number. Then there exists some $r'>r$ and $\bq'\in Gen_1^{r'}(\bp)$ such that: $\bq'$ is nice, $\bq\vartriangleleft\bq'$ and $Pr[M^{n_{\bq'}}_{\bq'}\models\varphi]\leq\zeta$.
\end{lem}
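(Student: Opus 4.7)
The plan is to build $\bq'$ as the terminal stage of an inductive extension of $\bq$. At each step we append one new positive coordinate drawn from $\bp$, using $\sum_{l>0}p_l=\infty$ as an unbounded reservoir, and pad with zeros so that the niceness conditions (b)--(d) are preserved. Concretely, at stage $t$, let $l_{t+1}$ be the next index beyond $r^{(t)}$ with $p_{l_{t+1}}>0$, and place it at some position $L_{t+1}>n_{\bq^{(t)}}$ large enough that $L_{t+1}$ does not participate in any forbidden additive collision (sum, triple-sum, or balanced four-term relation) with the previously chosen positive indices. Only finitely many positions are forbidden at each stage, so such $L_{t+1}$ always exists; this gives $\bq^{(t)}\vartriangleleft\bq^{(t+1)}$ nice and in $Gen_1^{r^{(t+1)}}(\bp)$ with $r^{(t+1)}=r^{(t)}+1$. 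Since $\sum p_l=\infty$, we never run out of positive coordinates to draw from $\bp$, so the induction may be continued as long as we like.

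Next, I would estimate $Pr[M^{n_{\bq^{(t)}}}_{\bq^{(t)}}\models\varphi]$ as a function of $t$. For each interior vertex $x\in(l^*,n_{\bq^{(t)}}-l^*]$, both $x-l^*$ and $x+l^*$ lie in $[n_{\bq^{(t)}}]$; by Observation \ref{ObsEx}(3) both pairs $(x,x\pm l^*)$ satisfy $\lnot\phi^2_{0,3}(x,\cdot)$, and since both are forced neighbors (because $q_{l^*}=1$), the uniqueness clause in $\phi^3_{0,3}(x)$ fails with probability one. Hence only the $2l^*$ boundary vertices $x\in[1,l^*]\cup(n_{\bq^{(t)}}-l^*,n_{\bq^{(t)}}]$ can possibly witness $\varphi$. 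For each such boundary vertex, a refinement of the argument in Claim \ref{ClaimNiceQ} --- adapted to the present asymmetric situation, where $x$ has a single forced neighbor --- shows that $Pr[M^{n_{\bq^{(t)}}}_{\bq^{(t)}}\models\phi^3_{0,3}(x)]\to 0$ as more and more positive coordinates are appended. A union bound over the $2l^*$ boundary points then gives $Pr[M^{n_{\bq'}}_{\bq'}\models\varphi]\leq\zeta$ once $t$ is sufficiently large, and we take $\bq':=\bq^{(t)}$.

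The main obstacle is the final estimate: controlling $Pr[M^{n_{\bq^{(t)}}}_{\bq^{(t)}}\models\phi^3_{0,3}(x)]$ for boundary $x$. The forced neighbor of $x$ already provides one witness to the existential in $\phi^3_{0,3}(x)$, so we must ensure that the appended random structure supplies at least one \emph{further} witness --- a random neighbor $y$ of $x$ for which $\lnot\phi^2_{0,3}(x,y)$ holds --- with high enough probability that the $\exists!y$ clause is broken. This requires a two-sided analogue of the bound proved in Claim \ref{ClaimNiceQ}: whereas that claim shows such extra witnesses are rare when $\bq$ has few positive coordinates, here we need to quantify how the expected count of such witnesses grows as we keep adding positive coordinates from $\bp$, and then apply a second-moment or Poissonization argument to convert a large expectation into a high-probability multiplicity. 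Once this refined estimate is in hand, a finite inductive extension suffices, completing the proof.
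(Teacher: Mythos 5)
Your reduction is on target as far as it goes: interior vertices $x\in(l^*,n-l^*]$ fail $\phi^3_{0,3}$ with probability one because both forced neighbors $x\pm l^*$ are witnesses with $\lnot\phi^2_{0,3}$, so only the $2l^*$ boundary vertices matter, and for each of them one must produce, with high probability, a \emph{second} neighbor $y$ with $\lnot\phi^2_{0,3}(x,y)$; the paper does exactly this, also finishing with a second-moment (Chebyshev) bound and a union bound over the boundary. But the step you label ``the main obstacle'' and leave open is the entire content of the lemma, and the way you set up the construction points in the wrong direction. If the new positive coordinates of $\bp$ are appended at ``generic'' positions, constrained only to avoid the additive collisions in the definition of nice, then all positive distances are short relative to the final length $n_{\bq'}$, and in that regime Claim \ref{ClaimNiceQ} works \emph{against} you: the more fractional coordinates are present, the more likely it is that \emph{every} non-forced neighbor $y$ of a boundary vertex satisfies $\phi^2_{0,3}(x,y)$ (a square-chain to a non-edge can be assembled from the many available distances) --- this is precisely how Lemma \ref{LemOne1Neg} forces $\varphi$ to hold with probability $\geq 1-\zeta$ under essentially the same hypotheses. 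So the expected number of ``blocking'' witnesses does not simply grow as you keep appending coordinates; absent control of where the new coordinates sit relative to $n_{\bq'}$, the natural outcome of your induction is the opposite conclusion, and no amount of Poissonization fixes that.

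The missing idea is the specific placement used in the paper: choose $r'$ so that $\sum_{r<l\leq r'}p_l$ is large (possible since $\sum_l p_l=\infty$), and put the new entries $p_{r+i}$ at the positions $(r'-r+i)\cdot n_{\bq}$, $0<i\leq r'-r$, with $n_{\bq'}=2(r'-r)\cdot n_{\bq}$, so that every newly added positive distance exceeds half of $n_{\bq'}$ and consecutive ones differ by $n_{\bq}$. Then for a boundary vertex $x$ the candidate second witnesses are the far vertices $y_i=x+(r'-r+i)\cdot n_{\bq}$, whose short-range neighborhood is governed by the original finite $\bq$ alone; consequently the event $E_i$ that $y_i\thicksim x$ while $y_i$ has no neighbors besides $x,y_i\pm l^*$ has probability about $p_{r+i}\cdot p^1$ with $p^1=(\prod_{l\in[n_{\bq}]\setminus\{l^*\}}(1-p_l))^2$ a constant depending only on $\bq$, \emph{not} on $r'$ or on how much mass has been added; the $E_i$ are independent over $i$; each $E_i$ destroys all square-chains out of the pair $(y_i,x)$, hence gives $\lnot\phi^2_{0,3}(x,y_i)$ and so $\lnot\phi^3_{0,3}(x)$; and Chebyshev applied to $X=|\{i:E_i\}|$, whose expectation is driven to infinity by the divergence of $\sum p_l$, gives $Pr[\phi^3_{0,3}(x)]\leq\zeta/2l^*$. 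It is this fixed lower bound $p^1$, available only because the added distances are longer than $n_{\bq'}/2$, that converts $\sum_l p_l=\infty$ into the required estimate; your proposal never produces such a uniform bound, and with short, merely collision-free placements none exists. Hence the proof as proposed has a genuine gap at its central step.
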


\begin{proof}
Define $p^1:=(\prod_{l\in[n_{\bq}]\setminus\{l^*\}}(1-p_l))^2$, and choose $r'>r$ large enough such that $\sum_{r<l\leq r'}p_l\geq 2l^*\cdot p^1/\zeta$. Now define $\bq'\in Gen_1^{r'}(\bp)$ in the following way:
$$q'_l=
\left\{ \begin{array}{ll}
      q_l&0<l< n_{\bq}\\
      0&n_{\bq}\leq l<(r'-r)\cdot n_{\bq}\\
      p_{r+i}&l=(r'-r+i)\cdot n_{\bq}\textrm{ for some } 0<i\leq(r'-r)\\
      0 & (r'-r)\cdot n_{\bq}\leq l<2(r'-r)\cdot n_{\bq} \textrm{ and } l\not\equiv 0 \pmod {n_{\bq}}.
\end{array}\right.$$
Note that indeed $\bq'$ is nice and $\bq\vartriangleleft\bq'$. Denote $n:=n_{\bq'}=2(r'-r)\cdot n_{\bq}$. Note further that every member of $M^n_{\bq'}$ have at most one neighbor of distance more more than $n/2$, and all the rest of its neighbors are of distance at most $n_{\bq}$. We now bound from above the probability of $M^n_{\bq'}\models\exists x\phi^3_{0,3}(x)$. Let $x$ be in $[1,l^*]$. For each $0<i\leq(r'-r)$ denote $y_i:=x+(r'-r+i)\cdot n_{\bq}$ (hence $y_i\in[n/2,n]$) and let $E_i$ be the following event: "$M^n_{\bq'}\models y_i\thicksim z$ iff $z\in\{x,y_i+l^*,y_i-l^*\}$". By the definition of $\bq'$, each $y_i$ can only be connected to either $x$ of to members of $[y-n_{\bq},y+n_{\bq}]$, hence we have $$Pr[E_i]=q'_{(r'-r+i)\cdot n_{\bq}}\cdot p^1=p_{r+i}\cdot p^1.$$
As $i\neq j\Rightarrow n/2>|y_i-y_j|>n_{\bq}$ we have that the $E_i$-is are independent events. Now if $E_i$ holds then by the definition of $\phi^2_{0,3}$ we have $M^n_{\bq'}\models\lnot\phi^2_{0,3}[x,y_i]$, and as $M^n_{\bq'}\models\lnot\phi^2_{0,3}[x,x+l^*]$ this implies $M^n_{\bq'}\models\lnot\phi^3_{0,3}[x]$. Let the random variable $X$ denote the number of $0<i\leq(r'-r)$ such that $E_i$ holds in $M^n_{\bq'}$. Then by Chebyshev's inequality we have:
$$Pr[M^n_{\bq'}\models\phi^3_{0,3}[x]]\leq Pr[X=0]\leq \frac{Var(X)}{Exp(X)^2}\leq \frac{1}{Exp(X)}\leq \frac{p^1}{\displaystyle{\sum_{0<i\leq(r'-r)}}p_{r+i}}\leq \frac{\zeta}{2l^*}.$$
This is true for each $x\in[1,l^*]$ and the symmetric argument gives the same bound for each $x\in(n-l^*,n]$. Finally note that if $x,x+l^*$ both belong to $[n]$ then $M^n_{\bq'}\models\lnot\phi^2_{0,3}[x,x+l^*]$ (see \ref{ObsEx}(4)). Hence if $x\in(l^*,n-l^*]$ then $M^n_{\bq'}\models\lnot\phi^3_{0,3}[x]$. We conclude that:
$$Pr[M^n_{\bq'}\models\exists x\phi^3_{0,3}(x)]=Pr[M^n_{\bq'}\models\phi]\leq\zeta$$
as desired.
\end{proof}

\begin{lem} \label{LemOne1Neg}
Assume $\bp$ satisfy $0<p_l<1\Rightarrow\epsilon<p_l<1-\epsilon$ for some $\epsilon>0$, and $\sum_{n=1}^{\infty}p_n=\infty$. Let $\bq\in Gen_1^r(\bp)$ be nice, and $\zeta>0$ be some rational number. Then there exists some $r'>r$ and $\bq'\in Gen_1^{r'}(\bp)$ such that: $\bq'$ is nice, $\bq\vartriangleleft\bq'$ and $Pr[M^{n_{\bq'}}_{\bq'}\models\varphi]\geq1-\zeta$.
\end{lem}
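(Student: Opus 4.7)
The plan is to extend $\bq$ inside $Gen_1(\bp)$ by a long tail $\bq'$ so that Claim \ref{ClaimNiceQ} forces $\phi^3_{0,3}$ to hold at some point of the $l^*$-boundary of $M^{n_{\bq'}}_{\bq'}$ with probability close to one, while taking care that niceness is preserved. The two hypotheses we lean on are $\sum_{l>0}p_l=\infty$, which yields unboundedly many nonzero $p_i$ beyond index $r$, and the two-sided bound $0<p_l<1\Rightarrow\epsilon<p_l<1-\epsilon$; since $U^*(\bp)=\{l^*\}$ has already been used up inside $\bq$, every new nonzero value we adjoin lies in $(\epsilon,1-\epsilon)$.

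First I would pick an increasing sequence of positions $l^{(r+1)}<\cdots<l^{(r')}$, all larger than every index occurring in $\bq$, and place the value $p_i$ at $l^{(i)}$. At each step the niceness clauses (b), (c), (d) exclude only finitely many values for $l^{(i)}$ given the already-chosen positions---clause (d) being the Sidon-type condition $l^{(i)}\neq a+b-c$ for triples $a,b,c$ of earlier positions---so $l^{(i)}$ may be chosen greedily, growing super-linearly. For $p_i=0$ one can simply set $l^{(i)}:=l^{(i-1)}+1$. Finally I would pad with zeros up to length $n_{\bq'}>3l^{(r')}$ so that every nonzero coordinate lies strictly below $\lfloor n_{\bq'}/3\rfloor$, as required by Claim \ref{ClaimNiceQ}. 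Clause (a) of niceness is automatic because no new position carries the value $1$, so $\bq'\in Gen_1^{r'}(\bp)$ is nice and extends $\bq$.

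Next I would apply Claim \ref{ClaimNiceQ} to a single boundary vertex $y_0\in[1,l^*]$. Writing $m:=|\{0<l<n_{\bq'}:0<q'_l<1\}|$, the claim yields
\[
Pr\bigl[M^{n_{\bq'}}_{\bq'}\models\lnot\phi^3_{0,3}[y_0]\bigr]\leq 2(C_{\bq}+(1-\epsilon)m)\,(1-\epsilon^{11})^{m/2-1},
\]
where $C_{\bq}:=\sum_{l\in[n_{\bq}]}q_l$ is independent of $r'$, and the factor $2(C_{\bq}+(1-\epsilon)m)$ bounds the inner sum $\sum_{y:|y-y_0|\neq l^*}q'_{|y-y_0|}$ using that each non-$l^*$ coordinate of $\bq'$ is at most $1-\epsilon$ and each distance is attained by at most two $y\in[n_{\bq'}]$. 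Because $\sum_{i>r}p_i=\infty$ and nonzero such $p_i$ lie in $(\epsilon,1-\epsilon)$, taking $r'$ large enough makes $m$ arbitrarily large; the exponential factor then swamps the linear growth of the inner sum, driving the bound below $\zeta$. Since $\phi^3_{0,3}[y_0]$ implies $\varphi$, this gives $Pr[M^{n_{\bq'}}_{\bq'}\models\varphi]\geq 1-\zeta$.

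The principal obstacle will be maintaining the Sidon-type clause (d) of niceness while inserting arbitrarily many new nonzero entries; this is handled by the greedy super-linear growth of the $l^{(i)}$. A secondary but critical point is checking that the ``total mass'' factor in the bound of Claim \ref{ClaimNiceQ} grows only linearly in $m$, which is exactly where the uniform upper bound $p_l\leq 1-\epsilon$ on the non-unit coordinates is used so that the exponential term dominates.
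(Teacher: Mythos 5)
Your proposal is correct and takes essentially the same route as the paper: choose $r'$ so large that the number $m$ of coordinates lying in $(\epsilon,1-\epsilon)$ makes the linear-times-exponential bound $m(1-\epsilon^{11})^{m/2-1}$ drop below $\zeta$ (possible since $\sum_l p_l=\infty$ with only one coordinate equal to $1$ forces $m\to\infty$), extend $\bq$ to a nice $\bq'\in Gen_1^{r'}(\bp)$, and apply Claim \ref{ClaimNiceQ} at a single $l^*$-boundary vertex. Your write-up merely makes explicit details the paper leaves implicit (the greedy placement of new nonzero entries preserving niceness and the zero-padding ensuring every nonzero coordinate lies below $\lfloor n_{\bq'}/3\rfloor$), and uses a slightly cruder but still linear-in-$m$ bound on the inner sum, which changes nothing.
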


\begin{proof}
This is a direct consequence of Claim \ref{ClaimNiceQ}. For each $r'>r$ denote $m(r'):=|\{0<l\leq r': 0<p_l<1\}|$. Trivially we can choose $r'>r$ such that $m(r')(1-\epsilon^{11})^{m(r')/2-1}\leq\zeta$. As $\bq$ is nice there exists some nice $\bq'\in Gen_1^{r'}(\bp)$ such that $\bq\vartriangleleft\bq'$. Note that $$\sum_{\{y\in[n]:|1-y|\neq l^*\}}q'_{|1-y|}\leq\sum_{\{0<l<n_{\bq'}:l\neq l^*\}}q'_l\leq m(r')$$
and hence by \ref{ClaimNiceQ} we have: $$Pr[M^n_{\bq'}\models\lnot\phi]\leq Pr[M^n_{\bq'}\models\lnot\phi^3_{2,0}[1]]\leq m(r')(1-\epsilon^{11})^{m(r')/2-1}\leq \zeta$$
as desired.
\end{proof}

\relax From the last two lemmas we conclude:
\begin{conc} \label{ConcOne1}
Assume that $\bp$ satisfy $0<p_l<1\Rightarrow\epsilon<p_l<1-\epsilon$ for some $\epsilon>0$, and $\sum_{n=1}^{\infty}p_n=\infty$. Then $\bp$ does not satisfy the $1$-hereditary weak convergence law for $L$.
\end{conc}

The proof is by inductive construction of $\bq\in Gen_1(\bp)$ such that for $\varphi:=\exists x\phi^3_{0,3}(x)$ both $\varphi$ and $\lnot\varphi$ hold infinitely often in $M^n_{\bq}$, using Lemmas \ref{LemOne1Pos}, \ref{LemOne1Neg} as done on previous proofs.

\relax From Conclusion \ref{ConcOne1} we have a necessary condition on $\bp$ for the $1$-hereditary weak convergence law. We now find a sufficient condition on $\bp$ for the (not necessarily $1$-hereditary) 0-1 law.
Let us start with definitions of distance in graphs and of local properties in graphs.
\begin{df} Let $G$ be a graph on vertex set $[n]$.
\begin{enumerate}
    \item
    For $x,y\in[n]$ let $dist^G(x,y):=\min\{k\in\bN:G\text{ has a path of length }k\text{ from }x\text{ to }y\}$.
    Note that for each $k\in\bN$ there exists some $L$-formula $\theta_k(x,y)$ such that for all $G$ and $x,y\in[n]$:
    $$G\models\theta_k[x,y]\quad\text{iff}\quad dist^G(x,y)\leq k.$$
    \item
    For $x\in[n]$ and $r\in\bN$ let $B^G(r,x):=\{y\in[n]:dist^G(x,y)\leq r\}$ be the ball with radius $r$ and center $x$ in $G$.
    \item
    An $L$-formula $\phi(x)$ is called $r$-local if every quantifier in $\phi$ is restricted to the set $B^G(r,x)$. Formally each appearance of the form $\forall y...$ in $\phi$ is of the form $(\forall y)\theta_r(x,y)\to...$, and similarly for $\exists y$ and other variables. Note that for any $G$, $x\in[n]$, $r\in\bN$ and an $r$-local formula $\phi(x)$ we have:
      $$ G\models\phi[x]\quad\text{iff}\quad G|_{B(r,x)}\models\phi[x].$$
    \item
    An $L$-sentence is called \emph{local} if it has the form
    $$\exists x_1...\exists x_m\bigwedge_{1\leq i\leq m}\phi(x_i)\bigwedge_{1\leq i<j\leq m}\lnot\theta_{2r}(x_i,x_j)$$
    where $\phi=\phi(x)$ is an $r$-local formula for some $r\in\bN$.
    \item
    For $l,r\in\bN$ and an $L$-formula $\phi(x)$ we say that the $l$-boundary of $G$ is $r$-indistinguishable by $\phi(x)$ if for all $z\in[1,l]\cup(n-l,n]$ there exists some $y\in[n]$ such that $B^G(r,y)\cap([1,l]\cup(n-l,n])=\emptyset$ and $G\models\phi[z]\leftrightarrow\phi[y]$
\end{enumerate}
\end{df}

We can now use the following famous result from \cite{Ga}:
\begin{thm}[\textbf{Gaifman's Theorem}]
Every $L$-sentence is logically equivalent to a boolean combination of local $L$-sentences.
\end{thm}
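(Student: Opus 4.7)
The plan is to prove Gaifman's theorem by induction on the quantifier rank of the formula, strengthening the induction hypothesis to handle formulas with free variables. Specifically, I would prove: for every $L$-formula $\phi(x_1,\dots,x_k)$ of quantifier rank $\leq q$, there exists $r = r(q)$ such that $\phi$ is equivalent to a boolean combination of $r$-local formulas $\alpha(\bar{y})$ whose free variables are contained in $\{x_1,\dots,x_k\}$, together with local $L$-sentences. Taking $k=0$ recovers the stated theorem.

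Atomic formulas are $0$-local, and boolean connectives evidently preserve the desired normal form, so the real work is the quantifier step $\phi(\bar{x}) = \exists y\,\psi(\bar{x},y)$. Applying the induction hypothesis to $\psi$ and distributing over disjunctions, it suffices to analyse expressions of the form $\exists y\bigl(\sigma \wedge \bigwedge_i \alpha_i(\bar{x}_i,y)\bigr)$, where $\sigma$ is a conjunction of local sentences (not depending on $y$) and each $\alpha_i$ is $r$-local with free variables among $\{\bar{x},y\}$.

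The key combinatorial idea is to case-split on the location of a witness $y$. Either (i) $y$ lies within distance $3r$ of some $x_j$, in which case every $\alpha_i$ involving $y$ can be rewritten as a $4r$-local formula around $x_j$ (so the whole existential collapses to an $O(r)$-local formula around the $x_j$'s), or (ii) $y$ is at distance $>3r$ from every $x_j$. In case (ii), the $r$-neighbourhoods of $y$ and each $x_j$ are disjoint, so each $\alpha_i(\bar{x}_i,y)$ splits (using that an $r$-local formula depends only on its $r$-ball) into a conjunction of an $r$-local formula in the $\bar{x}_i$ alone and an $r$-local formula $\theta(y)$ in $y$ alone. The resulting statement ``$\exists y$ far from all $x_j$ satisfying $\theta(y)$'' is essentially a local sentence; one replaces ``far from all $x_j$'' by asserting the existence of such a $y$ in the structure and separately using the free-variable part to constrain $\bar{x}$ to a local formula.

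The principal obstacle is bookkeeping: one must carefully track radii (they grow linearly with quantifier rank), keep the syntactic shape of local sentences (pairwise $2r$-separated witnesses each satisfying a common $r$-local $\phi$), and handle multiple local sentences at once. The pairwise-separation clause is recovered by iterating the far/near split: instead of quantifying witnesses one at a time, one refines the cluster decomposition of witnesses by their mutual Gaifman distance and applies the dichotomy inductively to each cluster. The local sentences already present in $\sigma$ from the induction hypothesis pass through unchanged, and the final count of existentials needed is bounded in terms of the quantifier rank, keeping the normal form finite.
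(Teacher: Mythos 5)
The paper itself does not prove this statement: it is quoted as a known theorem and used as a black box, with a citation to Gaifman's original article, so there is no internal proof to compare your argument against. Judged on its own, your outline follows the standard route (induction on quantifier rank, with a near/far dichotomy for the existential witness), but it has a genuine gap at exactly the step that carries the whole weight of the theorem, namely the far-witness case. After splitting each $\alpha_i(\bar{x}_i,y)$ into $\beta(\bar{x})\wedge\theta(y)$ for $y$ at distance $>3r$ from $\bar{x}$, the condition you must express is $\exists y\,\bigl(d(y,\bar{x})>3r\wedge\theta(y)\bigr)$, and you propose to replace it by the local sentence $\exists y\,\theta(y)$ together with a local constraint on $\bar{x}$. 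That replacement is not an equivalence: consider a structure in which $\beta(\bar{x})$ holds, the only point satisfying $\theta$ lies at distance about $2r$ from $x_1$, and that point fails the original joint conjunction $\bigwedge_i\alpha_i(\bar{x}_i,y)$ (the splitting into $\beta\wedge\theta$ is only valid when $y$ is far, so a near point can satisfy $\theta$ without being a witness). Then the original formula is false, your near-case disjunct is false, but your far-case disjunct is true, so the rewritten formula is strictly weaker-shaped than claimed and the induction step breaks.

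The correct handling of this step is precisely the reason basic local sentences have the counting form ``there exist $m$ witnesses, pairwise $2r$-separated, each satisfying an $r$-local formula.'' One expresses ``some $\theta$-point lies far from $\bar{x}$'' by a pigeonhole between global counts and local counts: if there are $|\bar{x}|+1$ points pairwise at distance greater than (roughly) $4r$, each satisfying $\theta$, then at most one of them can be close to any given $x_j$, so one of them is far from all of $\bar{x}$; in the complementary cases one does a case split on the maximal number of pairwise-separated $\theta$-points (each case being a boolean combination of local sentences) together with suitable local formulas of larger radius around $\bar{x}$ that count the $\theta$-points detectable near $\bar{x}$. Your closing remark about refining a ``cluster decomposition'' of witnesses gestures in this direction but does not supply the actual equivalence, and without it the proof does not go through; by contrast, the radius bookkeeping and the passage through disjunctions that you flag as the main obstacle are indeed routine. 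So the skeleton is the standard one, but the non-routine core --- the counting argument that converts ``far from the parameters'' into boolean combinations of the paper's local sentences --- is missing.
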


We will use Gaifman's theorem to prove:
\begin{lem} \label{LemOne1}
Assume that for all $k\in\bN$ and $k$-local $L$-formula $\varphi(z)$ we have:
$$\lim_{n\to\infty}Pr[\text{The }l^*\text{-boundary of }M^n_{\bp}\text{ is }k\text{-indistinguishable by }\varphi(z)]=1.$$
Then the 0-1 law for $L$ holds in $M^n_{\bp}$.
\end{lem}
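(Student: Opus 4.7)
My plan is to apply Gaifman's locality theorem to reduce the 0-1 law to a counting statement for local sentences, and then to exploit approximate translation invariance in the bulk of $[n]$ together with the indistinguishability hypothesis.

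By Gaifman's theorem, every $L$-sentence is a Boolean combination of local sentences, so it suffices to prove that $Pr[M^n_{\bp}\models\chi]\to 0$ or $1$ for every local sentence $\chi=\exists x_1\cdots\exists x_m[\bigwedge_i\phi(x_i)\wedge\bigwedge_{i<j}\lnot\theta_{2r}(x_i,x_j)]$ with $\phi(z)$ an $r$-local formula. Since the edge-probability law is invariant under the shift $z\mapsto z+\Delta$ on $[n]$, for bulk $y$ sufficiently deep in $[n]$ the distribution of the rooted offset-labeled ball $(B^{M^n_{\bp}}(r,y),y)$ agrees up to truncation effects that are negligible under $(*)'$, so $\alpha_n(\phi):=Pr[M^n_{\bp}\models\phi[y]]$ is essentially constant over bulk $y$ (and exactly constant in the infinite model $M^{\mathbb{Z}}_{\bp}$).

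The key dichotomy is that $\alpha_n(\phi)$ is either identically zero --- when no $r$-ball type $T\models\phi$ is combinatorially realizable, i.e.\ some required edge of $T$ has $p_l=0$ or some forbidden non-edge of $T$ sits at distance $l^*$ --- or else $\alpha_n(\phi)\geq n^{-o(1)}$ for all large $n$. In the non-trivial case, any realizable $T_0\models\phi$ gives a lower bound: its internal edge/non-edge factor is a positive constant, while the ``ball-isolation" factor (no edges from $V_{T_0}$ to outside $V_{T_0}$) is at least $\prod_l(1-p_l)^{O(1)}$, which is $n^{-o(1)}$ by $(*)'$. In particular $n\alpha_n(\phi)\to\infty$ whenever $\alpha_n(\phi)$ is not identically zero, so no intermediate Poisson regime arises.

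Let $A_n$ denote the event that the $l^*$-boundary is $r$-indistinguishable by $\phi$; by hypothesis $Pr[A_n]\to 1$. If $\alpha_n(\phi)=0$, almost surely no bulk witness of $\phi$ exists, and on $A_n$ the contrapositive of indistinguishability forces no boundary witness either, so $\chi$ fails with probability tending to $1$. If $\alpha_n(\phi)\geq n^{-o(1)}$, the count $W_n$ of bulk witnesses has mean $\geq n^{1-o(1)}\to\infty$; a second-moment argument concentrates $W_n$ around its mean, and from the abundance of spatially spread bulk witnesses one extracts $m$ pairwise at graph distance $>2r$ with probability tending to $1$, making $\chi$ hold w.h.p. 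The main technical obstacle is this second-moment estimate when $\{l:p_l>0\}$ is infinite: long edges induce long-range correlations between $\phi[y]$ and $\phi[y']$, and one must use $(*)'$ carefully to bound the probability that the balls $B(r,y)$ and $B(r,y')$ share vertices, so that the covariance contribution from overlapping-ball pairs stays below $E[W_n]^2$.
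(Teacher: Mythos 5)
Your overall strategy (Gaifman's theorem, then a dichotomy according to whether some realizable bulk $r$-ball type satisfies $\phi$, with the indistinguishability hypothesis used to neutralize boundary effects and an abundance-of-witnesses argument in the positive case) is the same as the paper's; the paper obtains the abundance of disjoint bulk witnesses from a strong-embedding lemma quoted from \cite{LuSh} rather than from a second-moment computation, so the second-moment obstacle you flag is real work but not a structural difference. The genuine gap is in your negative case. The hypothesis only speaks about points $z$ lying in the $l^*$-boundary $[1,l^*]\cup(n-l^*,n]$, whereas a witness $x$ of $\phi$ may be a vertex which is not in that set but whose ball $B^{M^n_{\bp}}(r,x)$ meets it (possibly via long edges, even when $x$ is positionally deep inside $[n]$); your sentence ``on $A_n$ the contrapositive of indistinguishability forces no boundary witness either'' does not cover such $x$, since indistinguishability by $\phi$ at boundary points says nothing about $\phi[x]$ for a non-boundary $x$ whose ball touches the boundary. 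The paper closes exactly this hole by passing to the derived formula $\varphi(z):=(\exists x)(\theta_{r-1}(x,z)\wedge\phi(x))$, which is $(2r-1)$-local, applying the hypothesis for this $\varphi$ (not for $\phi$) at a boundary point $z_0\in B^G(r-1,x_0)$, and thereby producing a new witness $x_1$ with $B^G(r-1,x_1)$ disjoint from the boundary, contradicting the assumption that no proper ball type satisfies $\phi$. This is precisely where the quantification over all $k$ and all $k$-local formulas in the hypothesis is needed; your event $A_n$, which invokes indistinguishability only by $\phi$ itself at radius $r$, is too weak to run the argument.

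For the same reason, your dichotomy ``$\alpha_n(\phi)$ is identically zero or at least $n^{-o(1)}$'' is not accurate as stated: for a positionally bulk $y$ the event $\phi[y]$ can have small positive probability arising solely from configurations in which $B(r,y)$ reaches the truncated $l^*$-structure near $1$ or $n$ through long edges, so $n\alpha_n(\phi)$ could a priori land in an intermediate regime. The correct dichotomy (the one the paper uses) is deterministic and is stated on proper ball types, i.e.\ positive-probability rooted configurations that are saturated under $\pm l^*$ at distance $<r$ from the root; the boundary-reaching configurations are then exactly the residual case that must be disposed of via the derived-formula application of the indistinguishability hypothesis described above. With that repair, and with either a worked-out second-moment bound or a citation of the interval-embedding lemma of \cite{LuSh} for the positive case, your argument becomes essentially the paper's proof.
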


\begin{proof}
By Gaifman's theorem it is enough if we prove that the 0-1 law holds in $M^n_{\bp}$ for local $L$-sentences. Let
$$\psi:=\exists x_1...\exists x_m\bigwedge_{1\leq i\leq m}\phi(x_i)\bigwedge_{1\leq i<j\leq m}\lnot\theta_{2r}(x_i,x_j)$$ be some local $L$-sentence, where $\phi(x)$ is an $r$-local formula.

Define $\gH$ to be the set of all $4$-tuples $(l,U,u_0,H)$ such that: $l\in\bN$, $U\subseteq[l]$, $u_0\in U$ and $H$ is a graph with vertex set $U$.
We say that some $(l,U,u_0,H)\in\gH$ is $r$-proper for $\bp$ (but as $\bp$ is fixed we usually omit it) if it satisfies:
\begin{itemize}
    \item[$(*_1)$] For all $u\in U$, $dist^H(u_0,u)\leq r$.
    \item[$(*_2)$] For all $u\in U$, if $dist^H(u_0,u) < r$ then $u+l^*,u-l^* \in U$.
    \item[$(*_3)$] $Pr[M^l_{\bp}|_U=H]>0$.
\end{itemize}
We say that a member of $\frak{H}$ is proper if it is $r$-proper for some $r\in\bN$.

Let $H$ be a graph on vertex set $U\subseteq[l]$ and $G$ be a graph on vertex set $[n]$. We say that $f:U\to[n]$ is a strong embedding of $H$ in $G$ if:
\begin{itemize}
    \item $f$ in one-to one.
    \item For all $u,v\in U$, $H\models u\thicksim v$ iff $G\models f(u)\thicksim f(v)$.
    \item For all $u,v\in U$, $f(u)-f(v)=u-v$.
    \item If $i\in Im(f)$, $j\in[n]\setminus Im(f)$ and $|i-j|\neq l^*$ then $G\models\lnot i\thicksim j$.
\end{itemize}
We make two observations which follow directly from the definitions:
\begin{enumerate}
    \item
    If $(l,U,u_0,H)\in \gH$ is $r$-proper and $f:U\to[n]$ is a strong embedding of $H$ in $G$ then $Im(f)=B^G(r,f(u_0))$. Furthermore for any $r$-local formula $\phi(x)$ and $u\in U$ we have, $G\models\phi[f(u)]$ iff $H\models\phi[u]$.
    \item
    Let $G$ be a graph on vertex set $[n]$ such that $Pr[M^n_{\bp}=G]>0$, and $x\in[n]$ be such that $B^G(r-1,x)$ is disjoint to $[1,l^*]\cup(n-l^*,n]$. Denote by $m$ and $M$ the minimal and maximal elements of $B^G(r,x)$ respectively. Denote by $U$ the set $\{i-m+1:i\in B^G(r,x)\}$ and by $H$ the graph on $U$ defined by $H\models u\thicksim v$ iff $G\models (u+m-1)\thicksim (v+m-1)$. Then the $4$-tuple $(M-m+1,U,x-m+1,H)$ is an $r$-proper member of $\gH$. Furthermore for any $r$-local formula $\phi(x)$ and $u\in U$ we have, $G\models\phi[u-m+1]$ iff $H\models\phi[u]$.
\end{enumerate}
We now show that for any proper member of $\frak{H}$ there are many disjoint strong embeddings into $M^n_{\bp}$. Formally:
\begin{claim} \label{ClaimOne1}
Let $(l,U,u_0,H)\in\gH$ be proper, and $c>1$ be some fixed real. Let $E^n_c$ be the following event on $M^n_{\bp}$: "For any interval $I\subseteq[n]$ of length at least  $n/c$ there exists some $f:U\to I$ a strong embedding of $H$ in $M^n_{\bp}$". Then $$\lim_{n\to\infty}Pr[E^n_c\text{ holds in }M^n_{\bp}]=1.$$
\end{claim}

We skip the proof of this claim an almost identical lemma is proved in \cite{LuSh} (see Lemma at page 8 there).

We can now finish the proof of Lemma \ref{LemOne1}. Recall that $\phi(x)$ is am $r$-local formula. We consider two possibilities. First assume that for some $r$-proper $(l,U,u_0,H)\in\gH$ we have $H\models\phi[u_0]$. Let $\zeta>0$ be some real. Then by the claim above, for $n$ large enough, with probability at least $1-\zeta$ there exists $f_1,...,f_m$ strong embeddings of $H$ into $M^n_{\bp}$ such that $\langle Im(f_i):1\leq i\leq m\rangle$ are pairwise disjoint. By observation (1) above we have:
\begin{itemize}
    \item For $1\leq i<j\leq m$, $B^{M^n_{\bp}}(r,f_i(u_0))\cap B^{M^n_{\bp}}(r,f_j(u_0))=\emptyset$.
    \item For $1\leq i \leq m$, $M^n_{\bp}\models\phi[f_i(u_0)]$.
\end{itemize}
Hence $f_1(u_0),...,f_m(u_0)$ exemplifies $\psi$ in $M^n_{\bp}$, so $Pr[M^n_{\bp}\models\psi]\geq 1-\zeta$ and as $\zeta$ was arbitrary we have $\lim_{n\to\infty}Pr[M^n_{\bp}\models\psi]=1$ and we are done.

Otherwise assume that for all $r$-proper $(l,U,u_0,H)\in\gH$ we have $H\models\lnot\phi[u_0]$. We will show that $\lim_{n\to\infty}Pr[M^n_{\bp}\models\psi]=0$ which will finish the proof. Towards contradiction assume that for some $\epsilon>0$ for unboundedly many $n\in\bN$ we have $Pr[M^n_{\bp}\models\psi]\geq\epsilon$. Define the $L$-formula:
$$\varphi(z):=(\exists x) (\theta_{r-1}(x,z)\wedge \phi(x)).$$
Note that $\varphi(z)$ is equivalent to a $k$-local formula for $k=2r-1$. Hence by the assumption of our lemma for some (large enough $n\in\bN$) we have with probability at least $\epsilon/2$: $M^n_{\bp}\models\psi$ and the $l^*$-boundary of $M^n_{\bp}$ is $k$-indistinguishable by $\varphi(z)$. In particular for some $n\in\bN$ and $G$ a graph on vertex set $[n]$ we have:
\begin{itemize}
    \item[$(\alpha)$] $Pr[M^n_{\bp}=G]>0$.
    \item[$(\beta)$] $G\models\psi$.
     \item[$(\gamma)$]The $l^*$-boundary of $G$ is $k$-indistinguishable by $\varphi(z)$.
\end{itemize}
By $(\beta)$ for some $x_0\in[n]$ we have $G\models\phi[x_0]$. If $x_0$ is such that  $B^G(r-1,x_0)$ is disjoint to $[1,l^*]\cup(n-l^*,n]$ then by $(\alpha)$ and observation (2) above we have some $r$-proper $(l,U,u_0,H)\in\gH$ such that $H\models\phi[u_0]$ in contradiction to our assumption. Hence assume that $B^G(r-1,x_0)$ is not disjoint to $[1,l^*]\cup(n-l^*,n]$ and let $z_0\in[n]$ belong to their intersection. So by the definition of $\varphi(z)$ we have $G\models\varphi[z_0]$ and by $(\gamma)$ we have some $y_0\in[n]$ such that $B^G(k,y_0)\cap([1,l^*]\cup(n-l^*,n])=\emptyset$ and $G\models\varphi[y_0]$. Again by the definition of $\varphi(z)$, and recalling that $k=2r-1$ we have some $x_1\in[n]$ such that $B^G(r-1,x_1)\cap([1,l^*]\cup(n-l^*,n])=\emptyset$ and $G\models\phi[x_1]$. So again by $(\alpha)$ and observation (2) we get a contradiction.
\end{proof}

\begin{rem}
Lemma \ref{LemOne1} above gives a sufficient condition for the 0-1 law. If we are only interested in the convergence law, then a weaker condition is sufficient, all we need is that the probability of any local property holding in the $l^*$-boundary converges. Formally:

Assume that for all $r\in\bN$ and $r$-local $L$-formula, $\phi(x)$, and for all $1\leq l\leq l^*$ we have: Both
$\langle Pr[M^n_{\bp}\models\phi[l]:n\in\bN\rangle$ and $\langle Pr[M^n_{\bp}\models\phi[n-l+1]:n\in\bN\rangle$ converge to a limit.
Then $M^n_{\bp}$ satisfies the convergence law.

The proof is similar to the proof of Lemma \ref{LemOne1}. A similar proof on the convergence law in graphs with the successor relation is Theorem 2(i) in \cite{LuSh}.
\end{rem}

We now use \ref{LemOne1} to get a sufficient condition on $\bp$ for the 0-1 law holding in $M^n_{\bp}$. Our proof relays on the assumption that $M^n_{\bp}$ contains few circles, and only those that are "unavoidable". We start with a definition of such circles:

\begin{df} \label{Def:Cycle}Let $n\in\bN$.
\begin{enumerate}
    \item
    For a sequence $\bar{x}=(x_0,x_1,...,x_k)\subseteq [n]$ and $0\leq i<k$ denote $l^{\bar{x}}_i:={x_{i+1}-x_i}$.
    \item
    A sequence $(x_0,x_1,...,x_k)\subseteq [n]$ is called possible for $\bp$ (but as $\bp$ is fixed we omit it and similarly below) if for each $0\leq i<k$, $p_{|l^{\bar{x}}_i|}>0$.
    \item
    A sequence $(x_0,x_1,...,x_k)$ is called a circle of length $k$ if $x_0=x_k$ and $\langle\{x_i,x_{i+1}\}:0\leq i<k\rangle$ is without repetitions.
    \item
    A circle of length $k$, is called simple if $(x_0,x_1,...,x_{k-1})$ is without repetitions.
    \item
    For $\bar{x}=(x_0,x_1,...,x_k)\subseteq [n]$, a pair $(S\dcup A)$ is called a symmetric partition of $\bar{x}$ if:
    \begin{itemize}
        \item $S\dcup A=\{0,...,k-1\}$.
        \item If $i\neq j$ belong to $A$ then $l^{\bar{x}}_i+l^{\bar{x}}_j\neq 0$.
        \item The sequence $\langle l^{\bar{x}}_i:i\in S\rangle$ can be partitioned into two sequences of length $r=|S|/2$: $\langle l_i:0\leq i<r\rangle$ and $\langle l'_i:0\leq i<r\rangle$ such that $l_i+l'_i=0$ for each $0\leq i<r$.
    \end{itemize}
    \item
    For $\bar{x}=(x_0,x_1,...,x_k)\subseteq [n]$ let $(Sym(\bar{x}),Asym(\bar{x}))$ be some symmetric partition of $\bar{x}$ (say the first in some prefixed order).
    Denote $Sym^+(\bar{x}):=\{i\in Sym(\bar{x}):l^{\bar{x}}_i>0\}$.
    \item
    We say that $\bp$ has no unavoidable circles if for all $k\in\bN$ there exists some $m_k\in\bN$ such that if $\bar{x}$ is a \underline{possible} circle of length $k$ then for each $i\in Asym(\bar{x})$, $|l^{\bar{x}}_i|\leq m_k$.
\end{enumerate}
\end{df}

\begin{thm} \label{ThmOne1}
Assume that $\bp$ has no unavoidable circles, $\sum_{l=1}^{\infty}p_l=\infty$ and $\sum_{l=1}^{\infty}(p_l)^2<\infty$.
Then $M^n_{\bp}$ satisfies the 0-1 law for $L$.
\end{thm}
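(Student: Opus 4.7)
The plan is to verify the sufficient condition supplied by Lemma~\ref{LemOne1}: for every $k\in\bN$ and every $k$-local $L$-formula $\varphi(z)$, the probability that the $l^*$-boundary of $M^n_{\bp}$ is $k$-indistinguishable by $\varphi$ tends to $1$ as $n\to\infty$. Since the boundary $[1,l^*]\cup(n-l^*,n]$ has only $2l^*$ vertices (a constant), it suffices to show that with probability tending to $1$, for each fixed boundary $z$ there is an interior $y\in[n]$ with $B^{M^n_{\bp}}(k,y)\cap([1,l^*]\cup(n-l^*,n])=\emptyset$ satisfying $M^n_{\bp}\models\varphi[z]\leftrightarrow\varphi[y]$.

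The central step would be a classification of the possible $k$-balls. Since $\bp$ has no unavoidable circles, every possible circle of length $\leq k$ has each antisymmetric edge of length at most $m_k$; combined with the bounded combinatorial size of a $k$-ball, this bounds the total spread of any $k$-ball in $[n]$ by an explicit constant $C=C(k,l^*,m_k)$. The second-moment hypothesis $\sum_l p_l^2<\infty$ enters here to ensure that the expected number of short-range ``extra'' edges (distinct from the automatic $l^*$-edges) incident to a fixed vertex is bounded uniformly in $n$, and that random coincidences producing closed walks of length $\leq k$ outside of symmetric cancellations are rare. One concludes that with probability tending to $1$, every $k$-ball arises from a proper $4$-tuple $(l_i,U_i,u_{0,i},H_i)\in\gH$ in the sense of the proof of Lemma~\ref{LemOne1}, and only finitely many such types $i=1,\dots,t$ occur with positive probability.

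For each positive-probability type $H_i$, Claim~\ref{ClaimOne1} (whose proof is a minor variant of the one in \cite{LuSh} and uses $\sum_l p_l=\infty$) then yields, with probability tending to $1$, a strong embedding $f_i\colon U_i\to I$ of $H_i$ into any interval $I\subseteq[n]$ of length $n/3$. Choosing $I$ strictly inside $(Ck,\,n-Ck)$ produces, for each $i$, an interior witness $y_i=f_i(u_{0,i})$ whose $k$-ball realizes type $H_i$; $r$-locality of $\varphi$ then gives $\varphi[y_i]\leftrightarrow(H_i\models\varphi[u_{0,i}])$, which is determined purely by the abstract type $H_i$.

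The main obstacle is that a boundary vertex such as $z=1$ intrinsically lacks the neighbors $z-l$ for any $l>0$, while every interior $y$ has at least both $l^*$-neighbors (since $p_{l^*}=1$); hence $B^{M^n_{\bp}}(k,z)$ and any interior $B^{M^n_{\bp}}(k,y)$ are never literally isomorphic, and the classification must be refined. The resolution I would pursue is to treat boundary ball types as one-sided restrictions of interior proper types in $\gH$, and to show, for each boundary type that occurs with positive probability, that the truth value of $\varphi$ on it is realized by an interior proper type as well. The ``no unavoidable circles'' assumption is essential at this step because it bounds how far the boundary asymmetry can propagate into the $k$-ball (only asymmetric edges of length $\leq m_k$ participate), while $\sum_l p_l^2<\infty$ controls the random fluctuations that could otherwise spoil the matching; reconciling these uniformly in $z$ is where most of the technical work lies, and is the heart of the proof.
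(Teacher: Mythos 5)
Your opening reduction is the paper's: invoke Lemma \ref{LemOne1}, fix a boundary vertex $z$ (there are only $2l^*$ of them), and use Claim \ref{ClaimOne1} to plant interior witnesses of prescribed $r$-proper types. But the intermediate ``classification of $k$-balls'' you build on is not correct. Since $\sum_l p_l=\infty$, infinitely many $p_l$ are positive and edges of unboundedly large length occur; the no-unavoidable-circles hypothesis only constrains lengths of edges occurring \emph{asymmetrically in circles}, so it does not bound the spread of a $k$-ball by any constant $C(k,l^*,m_k)$. Worse, the expected degree of an interior vertex is about $2\sum_{l\le n/2}p_l\to\infty$, so it is false that with probability tending to $1$ every $k$-ball ``arises from a proper $4$-tuple'' (strong embeddings require the image to be isolated apart from $l^*$-edges, an event of probability $n^{-o(1)}$ at a fixed vertex), and it is false that only finitely many types occur with positive probability. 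The paper never classifies typical balls; Claim \ref{ClaimOne1} is only used to guarantee that \emph{some} vertex in each long interval realizes a given proper type.

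More importantly, the step you yourself flag as ``the heart of the proof'' --- showing that the truth value of $\phi$ at a boundary vertex (whose ball can never satisfy the two-sided condition $(*_2)$ of properness) is also realized by an $r$-proper interior type --- is exactly what you do not supply, and it is where all the work in the paper lies. The paper argues by contradiction: assuming every $r$-proper type satisfies $\phi$ while $Pr[M^n_{\bp}\models\lnot\phi[1]]\ge\epsilon$ infinitely often, it proves (the long claim inside the proof of Theorem \ref{ThmOne1}) that with probability tending to $1$ there is a ``circle free saturated forest'' hanging off the boundary: for each boundary-adjacent vertex $i$, a binary tree of $l^*$-chains attached to $i$ through a non-$l^*$ edge, whose edges lie in no short circle. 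Here $\sum_l p_l^2<\infty$ bounds the number of short circles through a vertex (Steps 1--2), $\sum_l p_l=\infty$ provides, via a second-moment argument, the many candidate neighbors needed to grow the forest (Step 5), and no-unavoidable-circles controls which edge lengths can close circles. Then, on a positive-probability graph $G$ with $G\models\lnot\phi[1]$ carrying such a forest, one builds an explicit injection $f:B^G(r,1)\to\mathbb{Z}$ that relocates the forest so that the root of the tree at $1$ occupies the missing position $1-l^*$; well-definedness and injectivity of $f$ again use no-unavoidable-circles, and the resulting abstract structure is an $r$-proper member of $\gH$ isomorphic to $G|_{B^G(r,1)}$, contradicting the assumption since $\phi$ is $r$-local. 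Without this forest-and-re-embedding construction (or a substitute for it), your proposal establishes only the easy cases and leaves the theorem unproved.
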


\begin{proof}
Let $\phi(x)$ be some $r$-local formula, and $j^*$ be in $\{1,2,...,l^*\}\cup\{-1,-2,...,-l^*\}$. For $n\in\bN$ let $z_n^*=z^*(n,j^*)$ equal $j^*$ if $j^*>0$ and $n-j^*+1$ if $j^*<0$ (so $z_n^*$ belongs to $[1,l^*]\cup(n-l^*,n]$). We will show that with probability approaching $1$ as $n\to\infty$ there exists some $y^*\in[n]$ such that $B^{M^n_{\bp}}(r,y^*)\cap([1,l^*]\cup(n-l^*,n])=\emptyset$ and $M^n_{\bp}\models\phi[z_n^*]\leftrightarrow\phi[y^*]$. This will complete the proof by Lemma \ref{LemOne1}. For simplicity of notation assume $j^*=1$ hence $z_n^*=1$ (the proof of the other cases is similar). We use the notations of the proof of \ref{LemOne1}. In particular recall the definition of the set $\frak{H}$ and of an $r$-proper member of $\frak{H}$. Now if for two $r$-proper members of $\frak{H}$, $(l^1,x^1,U^1,H^1)$ and $(l^2,x^2,U^2,H^2)$ we have $H^1\models\phi[x^1]$ and $H^2\models\lnot\phi[x^2]$ then by Claim \ref{ClaimOne1} we are done. Otherwise all $r$-proper members of $\frak{H}$ give the same value to $\phi[x]$ and without loss of generality assume that if $(l,x,U,H)\in\frak{H}$ is a $r$-proper then $H\models\phi[x]$ (the dual case is identical). If $\lim_{n\to\infty}Pr[M^n_{\bp}\models\phi[1]]=1$ then again we are done by \ref{ClaimOne1}. Hence we may assume that:

\begin{itemize}
\item[$\odot$]  For some $\epsilon>0$, for an unbounded set of $n\in\bN$, $Pr[M^n_{\bp}\models\lnot\phi[1]]\geq\epsilon$.
\end{itemize}

In the construction below we use the following notations: $2$ denotes the set $\{0,1\}$. ${}^k2$ denotes the set of sequences of length $k$ of members of $2$, and if $\eta$ belongs to ${}^k2$ we write $|\eta|=k$. ${}^{\leq k}2$ denotes $\bigcup_{0\leq i\leq k}{}^k2$ and similarly ${}^{<k}2$. $\langle\rangle$ denotes the empty sequence, and for $\eta,\eta'\in{}^{\leq k}2$, $\eta\hat{}\eta'$ denotes the concatenation of $\eta$ and $\eta'$. Finally for $\eta\in{}^k2$ and $k'<k$, $\eta|_{k'}$ is the initial segment of length $k'$ of $\eta$.

Call $\bar{y}$ a saturated tree of depth $k$ in $[n]$ if:
\begin{itemize}
    \item $\bar{y}=\langle y_{\eta}\in[n]:\eta\in{}^{\leq k}2\rangle$.
    \item $\bar{y}$ is without repetitions.
    \item $\{y_{\langle 0\rangle},y_{\langle 1\rangle}\}=\{y_{\langle\rangle}+l^*,y_{\langle\rangle}-l^*\}$.
    \item If $0<l<k$ and $\eta\in{}^{l}2$ then $\{y_{\eta}+l^*,y_{\eta}-l^*\}\subseteq\{y_{\eta\hat{}\langle 0\rangle},y_{\eta\hat{}\langle 1\rangle},y_{\eta|_{l-1}}\}$.
\end{itemize}

Let $G$ be a graph with set of vertexes $[n]$, and $i\in[n]$. We say that $\bar{y}$ is a circle free saturated tree of depth $k$ for $i$ in $G$ if:
\begin{itemize}
    \item[(i)] $\bar{y}$ is a saturated tree of depth $k$ in $[n]$.
    \item[(ii)] $G\models i\thicksim y_{\langle\rangle}$ but $|i-y_{\langle\rangle}|\neq l^*$.
    \item[(iii)] For each $\eta\in{}^{<k}2$, $G\models y_{\eta}\thicksim y_{\eta\hat{}\langle 0\rangle}$ and $G\models y_{\eta}\thicksim y_{\eta\hat{}\langle 1\rangle}$.
    \item[(iv)] None of the edges described in (ii),(iii) belongs to a circle of length $\leq 6k$ in $G$.
    \item[(v)] Recalling that $\bp$ have no unavoidable circles let $m_{2k}$ be the one from definition \ref{Def:Cycle}(7). For all $\eta\in{}^{\leq k}2$ and $y\in[n]$ if $G\models y_{\eta}\thicksim y$ and $y\not\in\{y_{\eta\hat{}\langle 0\rangle},y_{\eta\hat{}\langle 1\rangle},y_{\eta|_{l-1}},i\}$ then $|y-y_{\eta}|>m_{2k}$.
\end{itemize}

For $I\subseteq [n]$ we say that $\langle\bar{y}^i:i\in I\rangle$ is a circle free saturated forest of depth $k$ for $I$ in $G$ if:
\begin{itemize}
    \item[(a)] For each $i\in I$, $\bar{y}^i$ is a circle free saturated tree of depth $k$ for $i$ in $G$.
    \item[(b)] As sets $\langle\bar{y}^i:i\in I\rangle$ are pairwise disjoint.
    \item[(c)] If $i_1,i_2\in I$ and $\bar{x}$ is a path of length $k'\leq k$ in $G$ from $y^{i_1}_{\langle\rangle}$ to $i_2$, then for some $j<k'$, $(x_j,x_{j+1})=(y^{i_1}_{\langle\rangle},i_1)$.
\end{itemize}

\begin{claim}
For $n\in\bN$ and $G$ a graph on $[n]$ denote by $I_k^*(G)$ the set $([1,l^*]\cup(n-l^*,n])\cap B^G(1,k)$. Let $E^{n,k}$ be the event: "There exists a circle free saturated forest of depth $k$ for $I^*_k(G)$". Then for each $k\in\bN$:
$$\lim_{n\to\infty}Pr[E^{n,k}\text{ holds in }M^n_{\bp}]=1.$$
\end{claim}

\begin{proof}
Let $k\in\bN$ be fixed. The proof proceeds in six steps:

\textbf{Step 1.} We  observe that only a bounded number of circles starts in each vertex of $M^n_{\bp}$. Formally
For $n,m\in\bN$ and $i\in[n]$ let $E^1_{n,m,i}$ be the event: "More than $m$ different circles of length at most $12k$ include $i$". Then for all $\zeta>0$ for some $m=m(\zeta)$ ($m$ depends also on $\bp$ and $k$ but as those are fixed we omit them from the notation and similarly below) we have:
\begin{itemize}
    \item[$\circledast_1$]  For all $n\in\bN$ and $i\in[n]$, $Pr_{M^n_{\bp}}[E^1_{n,m,i}]\leq \zeta$.
\end{itemize}
To see this note that if $\bar{x}=(x_0,...,x_{k'})$ is a possible circle in $[n]$, then
$$Pr[\bar{x} \text{ is a weak circle in }M^n_{\bp}]:=p(\bar{x})=\prod_{i\in Asym(\bar{x})}p_{|l^{\bar{x}}_i|}\cdot\prod_{i\in Sym^+(\bar{x})}(p_{l^{\bar{x}}_i})^2.$$
Now as $\bp$ has no unavoidable, circles let $m_{12k}$ be as in \ref{Def:Cycle}(7). Then the expected number of circles of length $\leq 12k$ starting in $i=x_0$ is
$$\sum_{\substack{k'\leq 12k,\bar{x}=(x_0,...,x_{k'})\\ \text{is a possible circle}}} p(\bar{x})\leq (m_{12k})^{12k}\cdot\sum_{0<l_1,...,l_{6k}<n}\prod_{i=1}^{6k}(p_{l_i})^2\leq(m_12k)^{12k}\cdot(\sum_{0<l<n}(p_l)^2)^{6k}.$$
But as $\sum_{0<l<n}(p_l)^2$ is bounded by $\sum_{l=1}^{\infty}(p_l)^2:=c^*<\infty$, if we take
$m=(m_{12k})^{12k}\cdot(c^*)^{6k}/{\zeta}$ then we have $\circledast_1$ as desired.

\textbf{Step 2.} We show that there exists a positive lower bound on the probability that a circle passes through a given edge of $M^n_{\bp}$. Formally: Let $n\in\bN$ and $i,j\in[n]$ be such that $p_{|i-j|}>0$. Denote By $E^2_{n,i,j}$ the event: "There does not exists a circle of length $\leq6k$ containing the edge $\{i,j\}$". Then there exists some $q_2>0$ such that:
\begin{itemize}
    \item[$\circledast_2$] For any $n\in\bN$ and $i,j\in[n]$ such that $p_{|i-j|}>0$, $Pr_{M^n_{\bp}}[E^2_{n,i,j}|i\thicksim j]\geq q_2$.
\end{itemize}
To see this call a path $\bar{x}=(x_0,...,x_{k'})$ good for $i,j\in[n]$ if $x_0=j$, $x_{k'}=i$, $\bar{x}$ does not contain the edge $\{i,j\}$ and does not contain the same edge more than once. Let $E'^2_{n,i,j}$ be the event: "There does not exists a path good for $i,j$ of length $<6k$". Note that for $i,j\in[n]$ and $G$ a graph on $[n]$ such that $G\models i\thicksim j$ we have: $(i,j,x_2,...,x_{k'})$ is a circle in $G$ iff
$(j,x_2,...,k_{k'})$ is a path in $G$ good for $i,j$. Hence for such $G$ we have: $E^2_{n,i,j}$ holds in $G$ iff $E'^2_{n,i,j}$ holds in $G$. Since the events $i\thicksim j$ and $E'^2_{n,i,j}$ are independent in $M^n_{\bp}$ we conclude:
$$Pr_{M^n_{\bp}}[E^2_{n,i,j}|i\thicksim j]=Pr_{M^n_{\bp}}[E'^2_{n,i,j}|i\thicksim j]=Pr_{M^n_{\bp}}[E'^2_{n,i,j}].$$
Next recalling Definition \ref{Def:Cycle}(7) let $m_k$ be as there. Since $\sum_{l>0}(p_l)^2<\infty$, $(p_l)^2$ converges to $0$ as $l$ approaches infinity, and hence so does $p_l$. Hence for some $m^0\in\bN$ we have $l>m^0$ implies $p_l<1/2$. Let $m^*_k:=\max\{m_{6k},m^0\}$. We now define for a possible  path $\bar{x}=(x_0,...x_{k'})$, $Large(\bar{x})=\{0\leq r <k':|l^{\bar{x}}_r|>m^*_k\}$. Note that as $\bp$ have no unavoidable circles we have for any possible circle $\bar{x}$ of length $\leq 6k$, $Large(\bar{x})\subseteq Sym(\bar{x})$, and $|Large(\bar{x})|$ is even. We now make the following claim: For each $0\leq k^* \leq \lfloor k/2 \rfloor$ let $E'^{2,k^*}_{n,i,j}$ be the event: "There does not exists a path, $\bar{x}$, good for $i,j$ of length $<6k$ with $|Large(\bar{x})|=2k^*$". Then there exists a positive probability $q_{2,k^*}$ such that for any $n\in\bN$ and $i,j\in[n]$ we have:
$$Pr_{M^n_{\bp}}[E'^{2,k^*}_{n,i,j}]\geq q_{2,k^*}.$$
Then by taking $q_2=\prod_{0\leq k^* \leq \lfloor k/2 \rfloor}q_{2,k^*}$ we will have $\circledast_2$. Let us prove the claim. For $k^*=0$ we have (recalling that no circle consists only of edges of length $l^*$):
\begin{eqnarray*}
Pr_{M^n_{\bp}}[E'^{2,0}_{n,i,j}]&=&\prod_{\substack{k'\leq 6k,\text{ }\bar{x}=(i=x_0,j=x_1,...,x_{k'})\\ \text{is a possible circle, }|Large(\bar{x})|=0}}(1-\prod_{r=1}^{k'-1}p_{|l^{\bar{x}}_r}|)\\
&\geq&(1-\max\{p_l:0<l\leq m^*_k,l\neq l^*\})^{6k\cdot(m^*_k)^{6k-1}}.
\end{eqnarray*}
But as the last expression is positive and depends only on $\bp$ and $k$ we are done.
For $k^*>0$ we have:
\begin{eqnarray*}
Pr_{M^n_{\bp}}[E'^{2,k^*}_{n,i,j}]&=&\prod_{\substack{k'\leq 6k,\text{ }\bar{x}=(i=x_0,j=x_1,...,x_{k'})\\ \text{is a possible circle, }|Large(\bar{x})|=k^*}}(1-\prod_{m=1}^{k'-1}p_{|l^{\bar{x}}_m}|)\\
&=&\prod_{\substack{k'\leq 6k,\text{ }\bar{x}=(i=x_0,j=x_1,...,x_{k'})\\ \text{is a possible circle, }\\|Large(\bar{x})|=k^*\text{ ,}0\not\in Large(\bar{x})}}(1-\prod_{m=1}^{k'-1}p_{|l^{\bar{x}}_m}|)\cdot
\prod_{\substack{k'\leq 6k,\text{ }\bar{x}=(i=x_0,j=x_1,...,x_{k'})\\ \text{is a possible circle, }\\|Large(\bar{x})|=k^*\text{ ,}0\in Large(\bar{x})}}(1-\prod_{m=1}^{k'-1}p_{|l^{\bar{x}}_m}|).
\end{eqnarray*}
But the product on the left of the last line is at least $$[\prod_{l_1,...,l_{k^*}>m^*_k}(1-\prod_{m=1}^{k^*}(p_{l_m})^2)]^{(m^*_k)^{(6k-2k^*)}\cdot(6k)^{2k^*}},$$
and as $\sum_{l>m^*_k}(p_l)^2\leq c^*<\infty$ we have $\sum_{l_1,...,l_{k^*}>m^*_k}\prod_{m=1}^{k^*}(p_{l_m})^2\leq(c^*)^{k^*}<\infty$ and hence $\prod_{l_1,...,l_{k^*}>m^*_k}(1-\prod_{m=1}^{k^*}(p_{l_m})^2)>0$ and we have a bound as desired.
Similarly the product on the right is at least
$$[\prod_{l_1,...,l_{k^*-1}>m^*_k}(1-\prod_{m=1}^{k^*-1}(p_{l_m})^2)\cdot 1/2]^{(m^*_k)^{(6k-2k^*-1)}\cdot(6k)^{2k^*}},$$
and again we have a bound as desired.

\textbf{Step 3.} Denote $$E^3_{n,i,j}:=E^2_{n,i,j}\wedge \bigwedge_{r=1,...,k} (E^2_{n,j+(r-1)l^*,j+rl^*}\wedge E^2_{n,j,j-(r-1)l^*,j-rl^*})$$ and let $q_3=q_2^{(2l^*+1)}$. We then have:
\begin{itemize}
    \item[$\circledast_3$] For any $n\in\bN$ and $i,j\in[n]$ such that $p_{|i-j|}>0$ and $j+kl^*,j-kl^*\in[n]$, $Pr_{M^n_{\bp}}[E^3_{n,i,j}|i\thicksim j]\geq q_3$.
\end{itemize}
This follows immediately from $\circledast_2$, and the fact that if $i,i',j,j'$ all belong to $[n]$ then the probability $Pr_{M^n_{\bp}}[E^2_{n,i,j}|E^2_{n,i',j'}]$ is no smaller then the probability $Pr_{M^n_{\bp}}[E^2_{n,i,j}]$.

\textbf{Step 4.} For $i,j\in[n]$ such that $j+kl^*,j-kl^*\in[n]$ denote by $E^4_{n,i,j}$ the event: "$E^3_{n,i,j}$ holds and for $x\in\{j+rl^*:r\in\{-k,-k+1,...,k\}\}$ and $y\in[n]\setminus\{i\}$ we have $x\thicksim y\Rightarrow (|x-y|=l^*\vee |x-y|>m_{2k})$". Then for some $q_4>0$ we have:
\begin{itemize}
    \item[$\circledast_4$] For any $n\in\bN$ and $i,j\in[n]$ such that $p_{|i-j|}>0$ and $j+kl^*,j-kl^*\in[n]$, $Pr_{M^n_{\bp}}[E^4_{n,i,j}|i\thicksim j]\geq q_4$.
\end{itemize}
To see this simply take $q_4=q_3\cdot (\prod_{l\in\{1,...,m_{2k}\}\setminus\{l^*\}}(1-p_l))^{2k+1}$, and use
$\circledast_3$.

 \textbf{Step 5.} For $n\in\bN$, $S\subseteq[n]$, and $i\in[n]$ let $E^5_{n,S,i}$ be the event: "For some $j\in[n]\setminus S$ we have $i\thicksim j$, $|i-j|\neq l^*$ and $E^4_{n,i,j}$". Then for each $\delta>0$ and $s\in\bN$, for $n\in\bN$ large enough (depending on $\delta$ and $s$) we have:
\begin{itemize}
    \item[$\circledast_5$] For all $i\in[n]$ and $S\subseteq[n]$ with $|S|\leq s$, $Pr_{M^n_{\bp}}[E^5_{n,S,i}]\geq 1-\delta$.
\end{itemize}
First let $\delta>0$ and $s\in\bN$ be fixed.
Second for $n\in\bN$, $S\subseteq[n]$ and $i\in[n]$ denote by $J^{n,S}_i$ the set of all possible candidates for $j$, namely $J^{n,S}_i:=\{j\in(kl^*,n-kl^*]\setminus S:|i-j|\neq l^*\}$. For $j\in J^{n,\emptyset}_i$ let $U_j:=\{j+rl^*:r\in\{-k,-k+1,...,k\}\}$. For $m\in\bN$ and $G$ a graph on $[n]$ call $j\in J^{n,S}_i$ a candidate of type $(n,m,S,i)$ in $G$, if each $j'\in U(j)$, belongs to at most $m$ different circles of length at most $6k$ in $G$. Denote the set of all candidates of type $(n,m,S,i)$ in $G$ by $J^{n,S}_i(G)$. Now let $X^{n,m}_i$ be the random variable on $M^n_{\bp}$ defined by:
$$X^{n,m}_i(M^n_{\bp})=\sum\{p_{|i-j|}:j\in J^{n,S}_i(M^n_{\bp})\}.$$
Denote $R^{n,S}_i:=\sum\{p_{|i-j|}:j\in J^{n,S}_i\}$.
Trivially for all $n,m,S,i$ as above, $X^{n,m}_i\leq R^{n,S}_i$. On the other hand, by $\circledast_1$ and the definition of a candidate, for all $\zeta>0$ we can find $m=m(\zeta)\in\bN$ such that for all $n,S,i$ as above and $j\in J^{n,S}_i$, the probability that $j$ is a candidate of type $(n,m,S,i)$ in $M^n_{\bp}$ is at least $1-\zeta$. Then for such $m$ we have: $Exp(X^{n,m}_i)\geq R^{n,S}_i(1-\zeta)$. Hence we have $Pr_{M^n_{\bp}}[X^{n,m}_i\leq R^{n,S}_i/2]\leq 2\zeta$. Recall that $\delta>0$ was fixed, and let $m^*=m(\delta/4)$. Then for all $n,S,i$ as above we have with probability at least $1-\delta/2$, $X^{n,m^*}_i(M^n_{\bp})\geq R^{n,S}_i/2$.
Now denote $m^{**}:=(2l^*+1)(m^*+2m_{2k})6k(m^*+1)$, and fix $n\in\bN$ such that $\sum_{0<l<n}p_l>2\cdot((m^{**}/(q_4\cdot \delta)\cdot2m_{2k}(2l^*+1)+(s+2kl^*+2))$. Let $i\in[n]$ and $S\subseteq[n]$ be such that $|S|\leq s$. We relatives our probability space $M^n_{\bp}$ to the event $X^{n,m^*}_i(M^n_{\bp})\geq R^{n,S}_i/2$, and all probabilities until the end of Step 5 will be conditioned to this event. If we show that under this assumption we have, $Pr_{M^n_{\bp}}[E^5_{n,S,i}]\geq 1-\delta/2$ then we will have $\circledast_5$.

Let $G$ be a graph on $[n]$ such that, $X^{n,m^*}_i(G)\geq R^{n,S}_i/2$. For $j\in J^{n,S}_i$ let $C_j(G)$ denote the set of all the pairs of vertexes which are relevant for the event $E^4_{n,i,j}$. Namely $C_j(G)$ will contain: $\{i,j\}$, all the edges $\{u,v\}$ such that $:u\in U(j)$, $v\neq i$ and $|u-v|<m_{2k}$, and all the edges that belong to a circle of length $\leq 6k$ containing some member of $U(j)$. We make some observations:
\begin{enumerate}
\item
$X^{n,m^*}_i(G)\geq (m^{**}/(q_4\cdot \delta))\cdot2m_{2k}(2l^*+1)$.
\item
There exists $J^1(G)\subseteq J^{n,S}_i$ such that:
    \begin{enumerate}
    \item The sets $U(j)$ for $j\in J^1(G)$ are pairwise disjoint. Moreover if $j_1,j_2\in J^1(G)$, $u_l\in U(j_l)$ for $l\in\{1,2\}$ and $j_1\neq j_2$ then $|u_1-u_2|>m_{2k}$.
    \item Each $j\in J^1(G)$ is a candidate of type $(n,m^*,S,i)$ in $G$.
    \item The sum $\sum\{p_{|i-j|}:j\in J^1(G)\}$ is at least $m^{**}/(q_4\cdot \delta)$.
    \end{enumerate}
    [To see this use (1) and construct $J^1$ by adding the candidate with the largest $p_{|i-j|}$ that satisfies (a). Note that each new candidate excludes at most $m_{2k}(2l^*+1)$ others.]
\item
Let $j$ belong to $J^1(G)$. Then the set $\{j'\in J^1(G):C_j(G)\cap C_{j'}(G)\neq\emptyset\}$ has size at most $m^{**}$. [To see this use (2)(b) above, the fact that two circles of length $\leq 6k$ that intersect in an edge give a circle of length $\leq 12k$ and similar trivial facts.]
\item
\relax From (3) we conclude that there exists $J^2(G)\subseteq j^1(G)$ and $\langle j_1,...j_r\rangle$ an enumeration of $J^2(G)$ such that:
    \begin{enumerate}
    \item For any $1\leq r'\leq r$ the sets $C(j_{r'})$ and $\cup_{1\leq r'' <r'}C(j_{r''})$ are disjoint.
    \item The sum $\sum\{p_{|i-j|}:j\in J^2(G)\}$ is greater or equal $1/(q_4\cdot \delta)$.
    \end{enumerate}
\end{enumerate}
Now for each $j\in J^{n,S}_i$ let $E^*_j$ be the event: "$i \thicksim j$ and $E^4_{n,i,j}$". By $\circledast_4$ we have for each $j\in J^{n,S}_i$, $Pr_{M^n_{\bp}}[E^*_j]\geq q_4\cdot p_{|i-j|}$. Recall that we condition the probability space $M^n_{\bp}$ to the event $X^{n,m^*}_i(M^n_{\bp})\geq R^{n,S}_i/2$, and let $\langle j_1,...j_r\rangle$ be the enumeration of $J^2(M^n_{\bp})$ from (4) above. (Formally speaking $r$ and each $j_{r'}$ is a function of $M^n_{\bp}$). We then have for $1\leq r'<r''\leq r$, $Pr_{M^n_{\bp}}[E^*_{j_{r'}}|E^*_{j_{r''}}]\geq Pr_{M^n_{\bp}}[E^*_{j_{r'}}]$, and $Pr_{M^n_{\bp}}[E^*_{j_{r'}}|\lnot E^*_{j_{r''}}]\geq Pr_{M^n_{\bp}}[E^*_{j_{r'}}]$. To see this use (2)(a) and (4)(a) above and the definition of $C_j(G)$.

Let the random variables $X$ and $X'$ be defined as follows. $X$ is the number of $j\in J^2(M^n_{\bp})$ such that $E^*_j$ holds in $M^n_{\bp}$. In other words $X$ is the sum of $r$ random variables $\langle Y_1,...,Y_r\rangle$, where for each $1\leq r' \leq r$, $Y_{r'}$ equals $1$ if $E^*_{j_{r'}}$ holds, and $0$ otherwise. $X'$ is the sum of $r$ \underline{independent} random variables $\langle Y'_1,...,Y'_r\rangle$, where for each $1\leq r' \leq r$ $Y'_{r'}$ equals $1$ with probability $q_4\cdot p_{|i-j_{r'}|}$ and $0$ with probability $1-q_4\cdot p_{|i-j_{r'}|}$. Then by the last paragraph for any $0 \leq t \leq r$, $$Pr_{M^n_{\bp}}[X\geq t]\geq Pr[X'\geq t].$$
But $Exp(X')=Exp(X)=q_4\cdot\sum_{1\leq r' \leq r}p_{|i-j_{r'}|}$ and by (4)(b) above this is grater or equal $1/\delta$. Hence by Chebyshev's inequality we have:
$$Pr_{M^n_{\bp}}[\lnot E^5_{n,S,i}]\leq Pr_{M^n_{\bp}}[X=0]\leq Pr[X'=0]\leq \frac{Var(X')}{Exp(X')^2}\leq \frac{1}{Exp(X')}\leq
\delta$$
as desired.

\textbf{Step 6.}
We turn to the construction of the circle free saturated forest. Let $\epsilon>0$, and we will prove that for $n\in\bN$ large enough we have $Pr[E^{n,k}\text{ holds in }M^n_{\bp}]\geq 1-\epsilon$. Let $\delta=\epsilon/(l^*2^{k+2})$ and $s=2l^*((k+2^k)(2l^*k+1))$. Let $n\in\bN$ be large enough such that $\circledast_5$ holds for $n$, $k$, $\delta$ and $s$. We now choose (formally we show that with probability at least $1-\epsilon$ such a choice exists) by induction on $(i,\eta)\in I^*_k(M^n_{\bp})\times {}^{\leq k}2$ (ordered by the lexicographic order) $y^i_{\eta}\in[n]$ such that:
\begin{enumerate}
    \item $\langle y^i_{\eta}\in[n]:(i,\eta)\in I^*_k(M^n_{\bp})\times {}^{\leq k}2\rangle$ is without repetitions.
    \item If $\eta=\langle\rangle$ then $M^n_{\bp}\models i\thicksim y^i_{\eta}$, but $|i-y^i_{\eta}|\neq l^*$.
    \item If $\eta\neq\langle\rangle$ then $M^n_{\bp}\models y^i_{\eta}\thicksim y^i_{\eta|_{|\eta|-1}}$.
    \item If $\eta=\langle\rangle$ then $M^n_{\bp}$ satisfies $E^4_{n,i,y^i_{\eta}}$ else, denoting $\rho:=\eta|_{|\eta|-1}$, $M^n_{\bp}$ satisfies $E^4_{n,y^i_{\rho},y^i_{\eta}}$.
\end{enumerate}
Before we describe the choice of $y^i_{\eta}$, we need to define sets $S^i_{\eta}\subseteq[n]$. For a graph $G$ on $[n]$ and $i\in I^*_k(G)$ let $S^*_i(G)$ be the set of vertexes in the first (in some pre fixed order) path of length $\leq k$ from $1$ to $i$ in $G$. Now let $S^*(G)=\bigcup_{i\in I^*_k(G)}S^*_i(G)$. For $(i,\eta)\in I^*_k(M^n_{\bp})\times {}^{\leq k}2$ and $\langle y^{i'}_{\eta'}\in[n]:(i',\eta')<_{lex}(i,\eta)\rangle$ define: $$S^i_{\eta}(G)=S^*(G)\cup\{[y^{i'}_{\eta'}-kl^*,y^{i'}_{\eta'}+kl^*]:(i'\eta')<_{lex}(i,\eta)\}.$$ Note that indeed $|S^*(G)|\leq s$ for all $G$. In the construction below when we write $S^i_{\eta}$ we mean $S^i_{\eta}(M^n_{\bp})$ where $\langle y^{i'}_{\eta'}\in[n]:(i',\eta')<_{lex}(i,\eta)\rangle$ were already chosen.
Now the choice of $y^i_{\eta}$ is as follows:
\begin{itemize}
    \item If $\eta=\langle\rangle$ by $\circledast_5$ with probability at least $1-\delta$, $E^5_{n,S^i_{\eta},i}$ holds in $M^n_{\bp}$ hence we can choose $y^i_{\eta}$ that satisfies (1)-(4).
    \item If $\eta=\langle0\rangle$ (resp. $\eta=\langle1\rangle$) choose $y^i_\eta=y^i_{\langle\rangle}-l^*$ (resp. $y^i_\eta=y^i_{\langle\rangle}+l^*$). By the induction hypothesis and the definition of $E^4_{n,i,j}$ this satisfies (1)-(4) above.
    \item If $|\eta|>1$, $|y^i_{\eta|_{|\eta|-1}}-y^i_{\eta|_{|\eta|-2}}|\neq l^*$ and $\eta(|\eta|)=0$ (resp. $\eta(|\eta|)=1$) then choose $y^i_\eta=y^i_{\eta|_{|\eta|-1}}-l^*$ (resp. $y^i_\eta=y^i_{\eta|_{|\eta|-1}}+l^*$). Again by the induction hypothesis and the definition of $E^4_{n,i,j}$  this satisfies (1)-(4).
    \item If $|\eta|>1$, $y^i_{\eta|_{|\eta|-1}}-y^i_{\eta|_{|\eta|-2}}= l^*$ (resp. $y^i_{\eta|_{|\eta|-1}}-y^i_{\eta|_{|\eta|-2}}=-l^*$) and $\eta(|\eta|)=0$, then choose $y^i_\eta=y^i_{\eta|_{|\eta|-1}}-l^*$ (resp. $y^i_\eta=y^i_{\eta|_{|\eta|-1}}+l^*)$.
    \item If $|\eta|>1$, $|y^i_{\eta|_{|\eta|-1}}-y^i_{\eta|_{|\eta|-2}}| = l^*$ and $\eta(|\eta|)=1$. Then by $\circledast_5$ with probability at least $1-\delta$, $E^5_{n,S^i_{\eta},y^i_{\eta|_{|\eta|-1}}}$ holds in $M^n_{\bp}$, and hence we can choose $y^i_{\eta}$ that satisfies (1)-(4).
\end{itemize}
At each step of the construction above the probability of "failure" is at most $\delta$, hence with probability at least $1-(l^*2^{k+2})\delta=1-\epsilon$ we compleat the construction. It remains to show that indeed $\langle y^i_{\eta}:i\in I^n,\eta\in{}^{\leq k}2\rangle$ is a circle free saturated forest of depth $k$ for $I^*_k$ in $M^n_{\bp}$. This is straight forward from the definitions. First each $\langle y^i_{\eta}:\eta\in{}^{\leq k}2\rangle$ is a saturated tree of depth $k$ in $[n]$ by its construction. Second (ii) and (iii) in the definition of a saturated tree holds by (2) and (3) above (respectively). Third note that by (4) each edge $(y,y')$ of our construction satisfies $E^2_{n,y,y'}$ and $E^4_{n,y,y'}$ hence (iv) and (v) (respectively) in the definition of a saturated tree follows. Lastly we need to show that (c) in the definition of a saturated forest holds. To see this note that if $i_1,i_2\in i^*_k(M^n_{\bp})$ then by the definition of $S^i_{\eta}(M^n_{\bp})$ there exists a path of length $\leq 2k$ from $i_1$ to $i_2$ with all its vertexes in $S^i_{\eta}(M^n_{\bp})$. Now if $\bar{x}$ is a path of length $\leq k$ from $y^{i_1}_{\langle\rangle}$ to $i_2$ and $(y^{i_1}_{\langle\rangle},i_1)$ is not an edge of $\bar{x}$, then necessarily $\{y^{i_1}_{\langle\rangle},i_1\}$ is included in some circle of length $\leq3k+2$. A contradiction to the choice of $y^{i_1}_{\langle\rangle}$. This completes the proof of the claim.
\end{proof}

By $\odot$ and the claim above we conclude that, for some large enough $n\in\bN$, there exists a graph $G=([n],\thicksim)$ such that:
\begin{enumerate}
\item $G\models\lnot\phi[1]$.
\item $Pr[M^n_{\bp}=G]>0$.
\item There exists $\langle\bar{y}^i:i\in I^*_r(G)\rangle$, a circle free saturated forest of depth $r$ for $I^*_r(G)$ in $G$.
\end{enumerate}

Denote $B=B^G(1,r)$, $I=I^*_r(G)$, and we will prove that for some $r$-proper $(l,{u_0},U,H)\in\frak{H}$ we have $(B,1)\cong(H,{u_0})$ (i.e. there exists a graph isomorphism from $G|_B$ to $H$ mapping $1$ to ${u_0}$). As $\phi$ is $r$-local we will then have $H\models\lnot\phi[{u_0}]$ which is a contradiction of our assumption and we will be done. We turn to the construction of $(l,{u_0},U,H)$. For $i\in I$ let $r(i)=r-dist^G(1,i)$. Denote $$Y:=\{y^{i}_{\eta}:i\in I,\eta\in{}^{< r(i)}2\}.$$ Note that by (ii)-(iii) in the definition of a saturated tree we have $Y\subseteq B$. We first define a one-to-one function $f:B\to \mathbb{Z}$ in three steps:

\textbf{Step 1.} For each $i\in I$ define $$B_{i}:=\{x\in B:\text{ there exists a path of length }\leq r(i)\text{ from } x \text{ to } i \text{ disjoint to }Y\}$$ and $B^0:=I\cup\bigcup_{i\in I}B_{i}$. Now define for all $x\in B^0$, $f(x)=x$. Note that:
\begin{itemize}
    \item[$\bullet_1$] $f|_{B^0}$ is one-to-one (trivially).
    \item[$\bullet_2$] If $x\in B^0$ and $dist^G(1,x)<r$ then $x+l^*\in[n]\Rightarrow x+l^*\in B^0$ and $x-l^*\in[n]\Rightarrow x-l^*\in B^0$ (use the definition of a saturated tree).
\end{itemize}

\textbf{Step 2.} We define $f|_Y$. We start by defining $f(y)$ for $y\in\bar{y}^1$, so let $\eta\in{}^{\leq r}2$ and denote $y=y^{1}_{\eta}$. We define $f(y)$ using induction on $\eta$ were ${}^{\leq r}2$ is ordered by the lexicographic order. First if $\eta=\langle\rangle$ then define $f(y)=1-l^*$. If $\eta\neq\langle\rangle$ let $\rho:\eta|_{|\eta|-1}$, and consider $u:=f(y^{1}_{\rho})$. Denote $F=F_{\eta}:=\{f(y^1_{\eta'}):\eta'<_{lex}\eta\}$. Now if $u-l^*\not\in F$ define $f(y)=u-l^*$. If $u-l^*\in F$ but $u+l^*\not\in F$ define $f(y)=u+l^*$. Finally, if $u-l^*,u+l^*\in F$, choose some $l=l_{\eta}$ such that $p_l>0$ and $u-l<\min{F}-rl^*-n$, and define $f(y)=u-l$. Note that by our assumptions $\{l:p_l>0\}$ is infinite so we can always choose $l$ as desired. Note further that we chose $f(y)$ such that $f|_{\bar{y}^1}$ is one-to-one. Now for each $i\in I\cap[1,l^*]$ and $\eta\in{}^{<r(i)}2$, define $f(y^{i}_{\eta})=f(y^1_{\eta})+(f(i)-1)$ (recall that $f(i)=i$ was defined in Step 1, and that $k(i)\leq k(1)$ so $f(y^{i}_{\eta})$ is well defined). For $i\in I\cap(n-l^*,n]$ preform a similar construction in "reversed directions". Formally define $f(y^{i}_{\langle\rangle})=i+l^*$, and the induction step is similar to the case $i=1$ above only now choose $l$ such that $u+l>\max{F}+rl^*+n$, and define $f(y)=u+l$. Note that:
\begin{itemize}
    \item[$\bullet_3$] $f|_Y$ is one-to-one.
    \item[$\bullet_4$] $f(Y)\cap f(B^0)=\emptyset$. In fact:
    \item[$\bullet_4^+$] $f(Y)\cap[n]=\emptyset$.
    \item[$\bullet_5$] If $i\in I\cap[1,l^*]$ then $i-l^*\in f(Y)$ (namely $i-l^*=f(y^i_{\langle\rangle})$).
    \item[$\bullet_5'$] If $i\in I\cap(n-l^*,n]$ then $i+l^*\in f(Y)$ (namely $i+l^*=f(y^i_{\langle\rangle})$).
     \item[$\bullet_6$] If $y\in Y\setminus\{y^{i}_{\langle\rangle}:i\in I\}$ and $dist^G(1,y)<r$ then $f(y)+l^*,f(y)-l^*\in f(Y)$. (Why? As if $dist^G(1,y^{i}_{\eta})<r$ then $|\eta|<r(i)$, and the construction of \textbf{Step 2}).
\end{itemize}

\textbf{Step 3.} For each $i\in I$ and $\eta\in{}^{<r(i)}2$, define $$B^{i}_{\eta}:=\{x\in B:\text{ there exists a path of length }\leq r(i)\text{ from } x \text{ to } y^{i}_{\eta} \text{ disjoint to }Y\setminus \{y^{i}_{\eta}\}\}$$
and $B^1:=\bigcup_{i\in I,\eta\in{}^{<r(i)}2}B^{i}_{\eta}$.

We now make a few observations:
\begin{itemize}
    \item[$(\alpha)$] If $i_1,i_2\in I$ then, in $G$ there exists a path of length at most $2r$ from $i_1$ to $i_2$ disjoint to $Y$. Why? By the definition of $I$ and (c) in the definition of a saturated forest.
    \item[$(\beta)$] $B^0$ and $B^1$ are disjoint and cover $B$. Why? Trivially they cover $B$, and by $(\alpha)$ and (iv) in the definition of a saturated tree they are disjoint.
    \item[$(\gamma)$] $\langle B^{i}_{\eta}:i\in I,\eta\in{}^{<r(i)}2\rangle$ is a partition of $B^1$. Why? Again trivially they cover $B^1$, and by (iv) in the definition of a saturated tree they are disjoint.
    \item[$(\delta)$] If $\{x,y\}$ is an edge of $G|_B$ then either $x,y\in B^0$, $\{x,y\}=\{i,y^{i}_{\langle\rangle}\}$ for some $i\in I$, $\{x,y\}\subseteq Y$ or $\{x,y\}\subseteq B^{i}_{\eta}$ for some $i\in I$ and $\eta\in{}^{<r(i)}2$. (Use the properties of a saturated forest.)
\end{itemize}

We now define $f|_{B^1}$. Let $\langle (B_j,y_j):j<j^*\rangle$ be some enumeration of $\langle (B^{i}_{\eta},y^{i}_{\eta}):i\in I,\eta\in{}^{<r(i)}2\rangle$. We define $f|_{B_j}$ by induction on $j<j^*$ so assume that $f|_{(\cup_{j'<j}B_{j'})}$ is already defined, and denote: $F=F_j:=f(B^0)\cup f(Y)\cup f(\cup_{j'<j}B_{j'})$. Our construction of $f|_{B_j}$ will satisfy:
\begin{itemize}
    \item $f|_{B_j}$ is one-to-one.
    \item $f(B_j)$ is disjoint to $F_j$.
    \item If $y\in B_j$ then either $f(y)=y$ or $f(y)\not\in[n]$.
\end{itemize}
Let $\langle z^j_s:s<s(j)\rangle$ be some enumeration of the set $\{z\in B_j:G\models y_j\thicksim z\}$. For each $s<s(j)$ choose $l(j,s)$ such that $p_{l(j,s)}>0$ and:
\begin{itemize}
    \item[$\otimes$] If $k\leq 4r$, $(m_1,...,m_k)$ are integers with absolute value not larger than $4r$ and not all equal $0$, and $(s_1,...s_k)$ is a sequence of natural numbers smaller than $j(s)$ without repetitions. Then $|\sum_{1\leq i\leq m}(m_i\cdot l(j,s_i))|>n+\max\{|x|:x\in F_j\}$.
\end{itemize}
Again as $\{l:p_l>0\}$ is infinite we can always choose such $l(j,s)$.
We now define $f|_{B_j}$. For each $y\in B_j$ let $\bar{x}=(x_0,...x_k)$ be a path in $G$ from $y$ to $y_j$, disjoint to $Y\setminus\{y_j\}$, such that $k$ is minimal. So we have $x_0=y$, $x_k=y_j$, $k\leq r$ and $\bar{x}$ is without repetitions. Note that by the definition of $B_j$ such a path exists. For each $0\leq t<k$ define
$$l_t=l_t(\bar{x})
\left\{ \begin{array}{ll}
      l(j,s)&l^{\bar{x}}_t=|y_j-z^j_s| \text{ for some }s<s(j)\\
      -l(j,s)&l^{\bar{x}}_t=-|y_j-z^j_s| \text{ for some }s<s(j)\\
      l^{\bar{x}}_t&\text{otherwise}.
\end{array}\right.$$
Now define $f(y)=f(y_j)+\sum_{0\leq t<k}l_t$. We have to show that $f(y)$ is well defined. Assume that both $\bar{x}_1=(x_0,...x_{k_1})$ and $\bar{x}_2  =(x'_0,...x'_{k_1})$ are paths as above. Then $k_1=k_2$ and $\bar{x}=(x_0,...,x_{k_1},x'_{k_2-1},...,x'_0)$ is a circle of length $k_1+k_2\leq2r$. By (v) in the definition of a saturated tree we know that for each $s<s(j)$, $|y_j-z^j_s|>m_{2r}$. Hence as $\bp$ is without unavoidable circles
we have for each $s<s(j)$ and $0\leq t<k_1+k_2$, if $|l^{\bar{x}}_t|=|y_j-z^j_s|$ then $t\in Sym(\bar{x})$. (see definition \ref{Def:Cycle}(6,7)). Now put for $w\in\{1,2\}$ and $s<s(j)$, $m^+_w(s):=|\{0\leq t<k_w:l^{\bar{x}_w}_t=y_j-z^j_s\}|$ and similarly $m^-_w(s):=|\{0\leq t<k_w:-l^{\bar{x}_w}_t=y_j-z^j_s\}|$. By the definition of $\bar{x}$ we have, $m^+_1(s)-m^-_1(s)=m^+_2(s)-m^-_2(s)$. But from the definition of $l_t(\bar{x})$ we have for $w\in\{1,2\}$, $$\sum_{0\leq t<k_w}l_t(\bar{x}_w)=\sum_{0\leq t<k_w}l^{\bar{x}_w}_t+\sum_{s<s(j)}(m^+_w(s)-m^-_w(s))(l(j,s)-(y_j-z^j_s)).$$ Now as $\sum_{0\leq t<k_1}l^{\bar{x}_1}_t=\sum_{0\leq t<k_2}l^{\bar{x}_2}_t$ we get $\sum_{0\leq t<k_1}l_t(x_1)=\sum_{0\leq t<k_2}l_t(x_2)$ as desired.

We now show that $f|_{B_j}$ is one-to-one. Let $y^1\neq y^2$ be in $B_j$. So for $w\in\{1,2\}$ we have a path $\bar{x}_w=(x^w_0,...x^w_{k_w})$ from $y^w$ to $y_j$. as before, for $s<s(j)$ denote $m^+_w(s):=|\{0\leq t<k_w:l^{\bar{x}_w}_t=y_j-z^j_s\}|$ and similarly $m^-_w(s)$. By the definition of $f_{B_j}$ we have $$f(y^1)-f(y^2)=y^1-y^2+\sum_{s<s(j)}[(m^+_1(s)-m^-_1(s))-(m^+_2(s)-m^-_2(s))]\cdot l(j,s).$$
Now if for each $s<s(j)$, $m^+_1(s)-m^-_1(s)=m^+_2(s)-m^-_2(s)$ then we are done as $y^1\neq y^2$. Otherwise
note that for each $s<s(j)$, $|m^+_1(s)-m^-_1(s)=m^+_2(s)-m^-_2(s)|\leq 4r$. Note further that $|\{s<s(j):m^+_1(s)-m^-_1(s)=m^+_2(s)-m^-_2(s)\neq 0\}|\leq 4r$. Hence by $\otimes$, and as $|y^1-y^2|\leq n$ we are done.

Next let $y\in B_j$ and $\bar{x}=(x_0,...,x_k)$ be a path in $G$ from $y$ to $y_j$. For each $s<s(j)$ define $m^+(s)$ and $m^-(s)$ as above, hence we have $f(y)=y_j+\sum_{s<s(j)}(m^+(s)-m^-(s))l(j,s)$. Consider two cases. First if $(m^+(s)-m^-(s))=0$ for each $s<s(j)$ then $f(y)=y$. Hence $f(y)\not\in f(B^0)=B^0$ (by $(\beta)$ above), $f(y)\not\in f(Y)$ (as $f(Y)\cap[n]=\emptyset$) and $f(y)\not\in f(\cup_{j'<j}B_{j'})$ (by $(\gamma)$ and the induction hypothesis). So $f(y)\not\in F_j$. Second assume that for some $s<s(j)$, $(m^+(s)-m^-(s))\neq0$. Then by the $\otimes$ we have $f(y)\not\in[n]$ and furthermore $f(y)\not\in F_j$. In both cases the demands for $f|_{B_j}$ are met and we are done. After finishing the construction for all $j<j^*$ we have $f|_{B^1}$ such that:
\begin{itemize}
    \item[$\bullet_7$] $f|_{B^1}$ is one-to-one.
    \item[$\bullet_8$] $f({B^1})$ is disjoint to $f(B^0)\cup f(Y)$.
    \item[$\bullet_9$] If $y\in {B^1}$ and $dist^G(1,y)<r$ then $f(y)+l^*,f(y)-l^*\in f({B^1})$. In fact $f(y+l^*)=f(y)+l^*$ and $f(y-l^*)=f(y)-l^*$. (By the construction of Step 3.)
\end{itemize}

Putting $\bullet_1-\bullet_9$ together we have constructed $f:B\to\mathbb{Z}$ that is one-to-one and satisfies:
\begin{itemize}
    \item[$(\circ)$] If $y\in {B}$ and $dist^G(1,y)<r$ then $f(y)+l^*,f(y)-l^*\in f({B})$. Furthermore:
     \item[$(\circ\circ)$]$\{y,f^{-1}(f(y)-l^*)\}$ and $\{y,f^{-1}(f(y)+l^*)\}$ are edges of $G$.
\end{itemize}
For $(\circ\circ)$ use: $\bullet_2$ with the definition of $f|_{B^0}$, $\bullet_5+\bullet'_5$ with the fact that $G\models i\thicksim y^{i}_{\langle\rangle}$, $\bullet_6$ with the construction of Step 2 and $\bullet_9$.

We turn to the definition of $(l,{u_0},U,H)$ and the isomorphism $h:B\to H$. Let $l_{min}=\min\{f(b):b\in B\}$ and $l_{max}=\max\{f(b):b\in B\}$. Define:
\begin{itemize}
    \item $l=l_{min}+l_{max}+1$.
    \item ${u_0}=l_{min}+2$.
    \item $U=\{z+l_{min}+1:z\in Im(f)\}$.
    \item For $b\in B$, $h(b)=f(b)+l_{min}+1$.
    \item For $u,v\in U$, $H\models u\thicksim v$ iff $G\models h^{-1}(u)\thicksim h^{-1}(v)$.
\end{itemize}
As $f$ was one-to-one so is $h$, and trivially it is onto $U$ and maps $1$ to ${u_0}$. Also by the definition of $H$, $h$ is a graph isomorphism. So it remains to show that $(l,{u_0},U,H)$ is $r$-proper. First $(*)_1$ in the definition of proper is immediate from the definition of $H$. Second for $(*)_2$ in the definition of proper let $u\in U$ be such that $dist^H({u_0},u)<r$. Denote $y:=h^{-1}(u)$ then by the definition of $H$ we have $dist^G(1,y)<r$, hence by $(\circ)$, $f(y)+l^*,f(y)-l^*\in f(B)$ and hence by the definition of $h$ and $U$, $u+l^*, u-l^* \in U$ as desired. Lastly to see $(*)_3$ let $u,u'\in U$ and denote $y=h^{-1}(u)$ and $y'=h^{-1}(u')$. Assume $|u-u'|=l^*$ then by $(\circ\circ)$ we have $G\models y\thicksim y'$ and by the definition of $H$, $H\models u\thicksim u'$. Now assume that $H\models u\thicksim u'$ then $G\models y\thicksim y'$. Using observation $(\delta)$ above and rereading  1-3 we see that $|u-u'|$ is either $l^*$, $|y-y'|$, $l_{\eta}$ for some $\eta\in{}^{< r}2$ (see Step 2) or $l(j,s)$ for some $j<j^*,s<s(j)$ (see step 3). In all cases we have $P_{|u-u'|}>0$. Together we have $(*)_3$ as desired. This completes the proof of Theorem \ref{ThmOne1}.
\end{proof}


\begin{thebibliography}{1}

\bibitem{LuSh}
Tomasz {\L}uczak and Saharon Shelah. Convergence in homogeneous random graphs.
Random Structures Algorithms, 6 (1995),
no. 4, 371--391.


\bibitem{Sh}
Saharon Shelah. Hereditary convergence laws with successor. In preparation.

\bibitem{Ga}
Haim Gaifman. On local and nonlocal properties.
Proceedings of the {H}erbrand symposium ({M}arseilles, 1981), 105--135, Stud. Logic Found. Math., 107, North-Holland, Amsterdam, 1982.
\end{thebibliography}
\end{document}